\definecolor{mygreen}{RGB}{28,172,0} 
\definecolor{mylilas}{RGB}{170,55,241}
\newcommand{\RR}{\mathbb{R}}
\newcommand{\NN}{\mathbb{N}}
\newcommand{\Nhat}{\mathcal{\hat{N}}}
\newcommand{\Nlim}{\mathcal{N}}
\newcommand{\st}{\quad\textrm{s.t.}\quad}
\newcommand*{\norm}[1]{\left\Vert#1\right\Vert}
\newtheorem{prop}{Proposition}[section]
\newtheorem{thm}[prop]{Theorem}
\newtheorem{defn}[prop]{Definition}
\newtheorem{lem}[prop]{Lemma}
\newtheorem{algo}[prop]{Algorithm}
\theoremstyle{definition}
\newtheorem{remark}[prop]{Remark}
\newtheorem{example}[prop]{Example}
\DeclareMathAlphabet{\pazocal}{OMS}{zplm}{m}{n}
\newcommand\nocaption{%
    \renewcommand\p@subfigure{}
    \renewcommand\thesubfigure{\thefigure\alph{subfigure})}
}
\begin{document}
\author{Christian Kanzow and Felix Weiß}
\title{An Exact Penalty Approach for General $ \ell_0 $-Sparse Optimization Problems}
\maketitle

\begin{abstract}
We consider the general nonlinear optimization problem where the 
objective function has an additional term defined by the
$ \ell_0 $-quasi-norm in order to
promote sparsity of a solution. This problem is highly difficult 
due to its nonconvexity and discontinuity. We generalize some
recent work and present a whole class of reformulations of
this problem consisting of smooth nonlinear programs. This
reformulated problem is shown to be equivalent to the original
$ \ell_0 $-sparse optimization problem both in terms of local
and global minima. The reformulation contains a complementarity
constraint, and exploiting the particular structure of 
this reformulated problem, we introduce several problem-tailored
constraint qualifications, first- and second-order optimality
conditions and develop an exact penalty-type method which is shown
to work extremely well on a whole bunch of different applications. 
\end{abstract}

{
\noindent
\small\textbf{Keywords.}
Sparse optimization; global minima; local minima;
strong stationarity; second-order conditions; 
approximate KKT conditions;
exact penalty function
\par\addvspace{\baselineskip}
}

\section{Introduction}\label{Sec:Intro}

In this paper, we consider the sparse optimization problem of the form
\begin{equation}
	\min_x f(x) + \rho \norm{x}_0, \quad x \in X, \tag{SPO} \label{SPO}
\end{equation} 
where $f \colon \RR^n \to \RR $ is a smooth function, $ X \subseteq 
\RR^n $ a nonempty and closed set, $ \rho > 0 $ a given scalar, and
\begin{equation*}
	\norm{x}_0 := \textrm{number of nonzero components } x_i 
	\textrm{ of } x.
\end{equation*}
Note that $ \norm{\cdot}_0 $ is not a norm, though it is often referred
to as the \emph{$ \ell_0 $-norm} in the literature. We call 
\eqref{SPO} also the \emph{sparsest} optimization problem since we
really want to solve this problem with the $ \ell_0 $-norm, and do
not approximate this expression by some weaker version like in the 
standard approach, where the term $ \norm{x}_0 $ gets approximated
by $ \norm{x}_1 $ or some other (nicer) function. In the first part of
the paper, we deal with an abstract feasible set $ X $, whereas later,
in the algorithmic part, we will assume that $ X $ is described 
suitably by some equality and inequality constraints.

The solution of the sparsest optimization problem \eqref{SPO} is 
highly difficult due to the nonconvexity and discontinuity of the 
$ \ell_0 $-term in the objective function. According to 
\cite{le2015dc}, existing solution methods for \eqref{SPO} can be 
divided into the following categories: (a) convex approximations,
(b) nonconvex approximations, and (c) nonconvex exact reformulations.

The convex approximation schemes typically replace the 
$ \ell_0 $-norm by the $ \ell_1 $-norm. This is the most standard
approach which works very successfully in many applications.
Furthermore, it has the major advantage that the resulting 
optimization problem is convex provided that the objective
function $ f $ and the feasible set $ X $ are convex. The
$ \ell_1 $-norm makes this problem nondifferentiable, 
nevertheless, there are plenty of methods which can be applied
to this nonsmooth convex problem, see, e.g., the excellent
monograph \cite{Beck2017} for many examples of this kind.

The class of nonconvex approximation schemes usually replaces the 
$ \ell_0 $-term in \eqref{SPO} by a nonconvex penalty function.
One possibility is to use the $ \ell_p $-quasi-norm for 
$ p \in (0,1) $, see, e.g., \cite{ChenGuoLuYe2017,DeMarchiKanzow2023}, 
which has nicer properties than the $ \ell_0 $-norm, e.g., it
is continuous. However, despite its nonconvexity, it also
fails to be Lipschitz continuous. There exist several other 
nonconvex penalty functions with the aim to approximate the
$ l_0 $-norm in a suitable way and to keep some nicer smoothness
assumptions like SCAD (= smoothly clipped absolute deviation) \cite{FanLi2001}, MCP (= minimax concave penalty) \cite{Zhang2010}, 
PiE (= piecewise exponential) \cite{Nguyen2015} or the transformed $ \ell_1 $ approach \cite{ZhangYin2018}. The penalty 
decomposition algorithm from \cite{Lu2013} is another
approximation scheme for the solution of \eqref{SPO} and based
on the quadratic (inexact) penalty function.
Note that many of these
techniques are investigated only for particular classes of
problems covered by \eqref{SPO}.

Here, we are particularly interested in the third class, the 
exact (nonconvex) reformulations of the sparsest optimization problem.
There exist exact reformulations of \eqref{SPO} as mixed-integer
programs, see \cite{Bienstock1996} and the recent survey article 
\cite{Tillmann2021} for further references. At least for convex quadratic
programs with an additional $ \ell_0 $-term, this allows to 
compute a global minimum by suitable solvers if they get enough
time.  Another type of reformulation has been developed on the back of DC-approaches (DC = difference of convex) in \cite{le2015dc}, where DC-functions were used to approximate the $\ell_0$-norm, and in \cite{Gotoh2017}, where an exact reformulation of the $\ell_0$-norm was featured (though mainly in the context of cardinality-constrained 
problems, see below).
The paper \cite{Feng2018} considers an approach where the $ \ell_0 $-term
is replaced by a suitable complementarity constraint. 
Subsequently, the latter was shown to be equivalent to \eqref{SPO}
both in terms of local and global minima by the authors in 
\cite{KanzowSchwartzWeiss2022}.

The current work generalizes the recent contribution from
\cite{KanzowSchwartzWeiss2022} by introducing and investigating a whole class of 
reformulations of \eqref{SPO}. The main idea presented here is
somewhat related to a similar technique for 
cardinality-constrained optimization problems discussed in \cite{CervinkaKanzowSchwartz2016, BurdakovKanzowSchwartz2016}
where the $ \ell_0 $-term in the objective function is replaced 
by a constraint of the form $ \| x \|_0 \leq s $ for some given 
$ s \in \mathbb{N} $. Note, however, that it is not possible 
to reformulate cardinality-constrained problems into 
a sparsest optimization problem, see \cite{Tillmann2021} for a counterexample.

The class of reformulations presented here is
also related to optimization problems called mathematical programs with equilibrium or switching constraints (MPEC and MPSC, for short). The stationarity conditions developed here are mostly in the same vein. Globalization approaches for these types of problems include, for instance, relaxation and (exact) penalty methods, cf.\ \cite{KanzowMehlitzSteck2019, Ralph2004, Liang2021}. Due to the special (almost separable) structure
the equililibrium (complementarity) or switching constraints arise in our
class of reformulations, we are able to prove relatively strong results
which go far beyond those which are known for general MPECs or MPSCs.

The paper is organized as follows: Section~\ref{Sec:Background}
presents some background material from optimization and 
variational analysis. In Section~\ref{Sec:Reformulation}, we 
introduce our class of reformulations of problem \eqref{SPO}
and show that both the local and global minima coincide with the
global and local minima of \eqref{SPO} (note that this is in contrast
to the related reformulation of cardinality constraints discussed
in \cite{CervinkaKanzowSchwartz2016, BurdakovKanzowSchwartz2016} 
where the reformulated problem might have additional local minima).
We then introduce several problem-tailored constraint qualifications
in Section~\ref{Sec:CQs} and present the resulting first- and
second-order conditions for problem \eqref{SPO}. An approximate
stationarity concept will be discussed in Section~\ref{Sec:AS-Stationarity}. We then present our exact
penalty method in \ref{Sec:Algorithm} and provide several 
(strong) exactness and convergence results. In Section~\ref{Sec:Numerics},
we then investigate the numerical behaviour of our methods applied
to a variety of different applications, which indicates that our
method usually gets high-quality solutions, in many cases equal
to the global minimum (for those problems where the global minimum
is known or computable). We conclude with some final remarks in
Section~\ref{Sec:Final}.
 

We close with some remarks on our notation in use: In the various parts of this paper, we address via 
$$
   I_0(x):= \{ i \, | \, x_i = 0\}
$$ 
the set of indices for which $x$ vanishes. Furthermore, we write $x \circ y$ for the Hadamard-product of $x$ and $y$, i.e.\ the component wise multiplication of the two vectors. We abbreviate the canonical unit vector by $e_i \in \RR^{n}$, indicating that the single $1$ is in the $i$-th position, and additionally write $e := (1,1,...,1)^T \in \RR^{n}$. Since we will introduce sign constraints to our variables, we also denote with $\RR^n_+$ the cone of vectors with only non-negative entries in $\RR^n$.

\section{Mathematical Background}\label{Sec:Background}

This section provides some background from mathematical optimization
and variational analysis, see, e.g., the monographs \cite{Bertsekas1999,NocedalWright1999} and \cite{Mordukhovich2018,RockafellarWets2009}, respectively, for 
more details and corresponding proofs. 

Consider the optimization problem 
\begin{equation*}
	\min \ f(x) \quad \st \quad x \in X
\end{equation*}
with a continuously differentiable objective function $ f \colon \RR^n 
\to \RR $ and a nonempty, closed set $ X \subseteq \RR^n $. For a feasible
point $ x \in X $, the \emph{(Bouligand) tangent cone} or
\emph{contingent cone} of $ x $ with respect to $ X $ is defined by 
\begin{equation*}
	\mathcal{T}_X (x) := \Big\{ d \in \RR^n \, \Big| \,
	\exists \{ x^k \} \to_X
	x, \exists \{ t_k \} \downarrow 0: d = \lim_{k \to \infty}
	\frac{x^k - x}{t_k} \Big\},
\end{equation*}
where the notation $ x^k \to_X x $ indicates a sequence $ \{ x^k \} $
converging to $ x $ such that $ x^k \in X $ for all $ k \in \NN $.
Furthermore, if the feasible set $ X $ has a representation of the form
\begin{equation*}
	X = \big\{ x \in \RR^n \, \big| \, g_i(x) \leq 0 \ ( i = 1, \ldots, m ), 
	\ h_j(x) = 0 \ (j = 1, \ldots, p) \big\}
\end{equation*}
for continuously differentiable functions $ g_i, h_j: \RR^n \to \RR $,
the corresponding \emph{linearization cone} of $ x \in X $ is defined
by
\begin{equation*}
	\mathcal{L}_X (x) := \big\{ d \in \RR^n \, \big| \, 
	\nabla g_i(x)^T d \leq 0 \ ( i \in I_g(x)), \ 
	\nabla h_j(x)^T d = 0 \ (j = 1, \ldots, p)  \big\},
\end{equation*}
where 
\begin{equation*}
	I_g(x) := \big\{ i \in \{ 1, \ldots, m \} \, \big| \, g_i(x) = 0 \big\}
\end{equation*}
denotes the set of active inequality constraints at the feasible point 
$ x $. Note that the linearization cone depends on the particular
representation of $ X $, whereas the tangent cone is a purely geometric
object, independent of any representation.

Given a nonempty cone $ C \subseteq \RR^n $, we denote by
\begin{equation*}
	C^{\circ} := \big\{ v \in \RR^n \, \big| \, v^T d \leq 0 \textrm{ for all } d \in C \big\}
\end{equation*}
the \emph{polar cone} of $ C $. We then say that the \emph{Abadie
constraint qualification} (Abadie CQ or ACQ for short) holds at
$ x \in X $ if 
\begin{equation*}
	\mathcal{T}_X (x) = \mathcal{L}_X (x)
\end{equation*}
holds (the inclusion $ \mathcal{T}_X (x) \subseteq \mathcal{L}_X (x) $
is automatically true, hence the opposite inclusion is the central
requirement). Moreover, the \emph{Guignard constraint qualification}
(Guignard CQ or simply GCQ) is satisfied at $ x \in X $ if the 
corresponding polar cones coincide, i.e., if
\begin{equation*}
	\mathcal{T}_X (x)^{\circ} = \mathcal{L}_X (x)^{\circ}
\end{equation*}
holds. Note that ACQ implies GCQ, whereas the converse is not true
in general.

For a nonempty and closed set $ X \subseteq \RR^n $, we call 
\begin{equation*}
	\Nhat_X (\bar x) := \mathcal{T}_X (\bar{x})^{\circ}
\end{equation*}
the \emph{Fr\'echet normal cone} of $ \bar{x} \in X $. Furthermore,
\begin{align*}
	\Nlim_X (\bar x) & := \textrm{Limsup}_{x \to_X \bar x} \Nhat_X (x) \\
	& := \big\{ v \in \RR^n \, \big| \, \exists \{ x^k \}, 
	\exists \{ v^k \}: x^k \to_X \bar{x}, \ v^k \to v, \ 
	v^k \in \Nhat_X (x^k) \ \forall k \in \NN \}
\end{align*}
denotes the \emph{Mordukhovich normal cone} or \emph{limiting normal 
cone} of $ \bar x \in X $. For the sake of completeness, 
we set $ \Nhat_X (\bar x) :=
\Nlim_X (\bar x) := \emptyset $ for each point $ \bar{x} \not\in X $.
Note that we always have the inclusion $ \Nhat_X (\bar x)
\subseteq \Nlim_X (\bar x) $, whereas for convex sets $ X $, both normal
cones coincide and are equal to the usual normal cone from convex 
analysis, i.e., the equalities
\begin{equation*}
	\Nhat_X (\bar x) = \Nlim_X (\bar x) =
	\mathcal{N}_X^{conv} (\bar x) := \big\{ v \in \RR^n \, \big| \,
	v^T (x- \bar x ) \leq 0 \ \forall x \in X \}
\end{equation*}
hold for convex $ X $.

Next, let us write $ \overline{\RR} := \RR \cup \{ + \infty \} $
for the extended real line (excluding the value $ - \infty $).
For $ \varphi \colon \RR^n \to \overline{\RR} $ being proper, we call 
\begin{equation*}
	\textrm{epi} (\varphi) := \big\{ (x, \alpha ) \in \RR^n \times
	\RR \, \big| \, \varphi (x) \leq \alpha \big\}
\end{equation*}
the \emph{epigraph} of $ \varphi $. Based on the previously introduced
normal cones, we can define two corres\-ponding subdifferentials for 
the nonsmooth function $ \varphi $, namely the 
\emph{Fr\'echet subdifferential} of $ \bar x \in \textrm{dom} (\varphi)
:= \{ x \in \RR^n \, | \, \varphi (x) < \infty \} $, given by
\begin{equation*}
	\hat \partial \varphi (\bar x) := \big\{ s \in \RR^n \, \big| \,
	(s, -1) \in \Nhat_{\textrm{epi}(\varphi)} \big( \bar x, \varphi (\bar x)
	\big) \big\}
\end{equation*}
and the \emph{Limiting or Mordukhovich subdifferential}
\begin{equation*}
	\partial \varphi (\bar x) := \big\{ s \in \RR^n \, \big| \,
	(s, -1) \in \Nlim_{\textrm{epi}(\varphi)} \big( \bar x, \varphi (\bar x)
	\big) \big\}.
\end{equation*}
From the corresponding relation between the normal cones, we get
the inclusion $ \hat \partial \varphi (\bar x) \subseteq \partial \varphi (\bar x) $, whereas both subdifferentials coincide and are equal to
the standard subdifferential from convex analysis, i.e.,
\begin{equation*}
	\hat \partial \varphi (\bar x) = \partial \varphi (\bar x) =
	\partial^{conv} \varphi (\bar x) := \big\{ s \in \RR^n \, \big| \,
	\varphi (x) \geq \varphi (\bar x) + s^T (x - \bar x) \ \forall
	x \in \RR^n \big\}
\end{equation*}
for $ \varphi $ being a convex function.

Using these subdifferentials, we introduce the following notion.

\begin{defn}\label{Def:Stationarity}
Consider the optimization problem 
\begin{equation}\label{Eq:OptStationary}
	\min \ \psi (x) \quad \st \quad x \in X
\end{equation}
for some proper and lower semicontinuous function $ \psi \colon \RR^n \to 
\overline{\RR} $ and a nonempty, closed set $ X \subseteq \RR^n $. We then 
call $ \bar{x} \in X $ 
\begin{itemize}
	\item[(a)] an \emph{M-stationary point} (Mordukhovich stationary
	   point) of \eqref{Eq:OptStationary} if $ 0 \in \partial \psi (\bar x)
	   + \mathcal{N}_X (\bar x) $;
	\item[(b)] an \emph{S-stationary point} (strongly stationary point)
	   of \eqref{Eq:OptStationary} if $ 0 \in \hat{\partial} \psi (\bar x)
	   + \hat{\mathcal{N}}_X (\bar x) $.
\end{itemize}
\end{defn}

\noindent
Since the Fr\'echet normal cone is (in general) smaller than the 
limiting normal cone, S\-/stationarity is a stronger stationary concept than
M-stationarity.

We next restate a sum rule for the above two subdifferentials, see
\cite[Prop.\ 1.30]{Mordukhovich2018} for a proof.

\begin{thm}\label{Thm:SumRule}
Let $ \psi := f + \varphi $ with $ \varphi \colon \RR^n \to
\overline{\RR} $ proper and lower semicontinuous, $ f \colon \RR^n \to \RR $, and $ \bar x \in \textrm{dom} (\varphi) $ be given. Then the following statements hold:
\begin{itemize}
	\item[(a)] If $ f $ is differentiable in $ \bar x $, then 
	$ \hat{\partial} \psi (\bar x) = \nabla f(\bar x) + \hat{\partial}
	\varphi (\bar x) $ holds.
	\item[(b)] If $ f $ is continuously differentiable in a neighbourhood
	of $ \bar x $, then $ \partial \psi (\bar x) = \nabla f(\bar x) + \partial \varphi (\bar x) $ holds.
\end{itemize} 
\end{thm}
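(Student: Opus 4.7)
The plan is to reduce the claim to the well-known analytic/sequential descriptions of the two subdifferentials and then transfer them across $f$ using, respectively, a Taylor expansion and a continuity argument. For part (a), I would invoke the standard analytic characterisation of the Fréchet subdifferential (equivalent to the epigraphical definition used in the paper, cf.\ \cite{Mordukhovich2018}): $s \in \hat\partial \varphi(\bar x)$ holds if and only if
$$
\liminf_{x \to \bar x} \frac{\varphi(x) - \varphi(\bar x) - s^T(x - \bar x)}{\|x - \bar x\|} \geq 0.
$$

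Assuming $f$ differentiable at $\bar x$, we have $f(x) - f(\bar x) - \nabla f(\bar x)^T (x - \bar x) = o(\|x - \bar x\|)$. Adding these two relations, any $s \in \hat\partial \varphi(\bar x)$ immediately gives $s + \nabla f(\bar x) \in \hat\partial \psi(\bar x)$, proving the inclusion $\nabla f(\bar x) + \hat\partial \varphi(\bar x) \subseteq \hat\partial \psi(\bar x)$. The opposite inclusion follows by applying the very same argument to the decomposition $\varphi = \psi + (-f)$, using that $-f$ is differentiable at $\bar x$ with gradient $-\nabla f(\bar x)$.

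For part (b), I would exploit the sequential description of the limiting subdifferential of a proper lsc function (again a standard consequence of the normal-cone definition from \cite{Mordukhovich2018}): $s \in \partial \varphi(\bar x)$ if and only if there exist sequences $x^k \to \bar x$ with $\varphi(x^k) \to \varphi(\bar x)$ and $s^k \in \hat\partial \varphi(x^k)$ with $s^k \to s$. Since $f$ is $C^1$ on a neighbourhood of $\bar x$, for all sufficiently large $k$ part (a) applies at $x^k$ and yields $s^k + \nabla f(x^k) \in \hat\partial \psi(x^k)$. Continuity of $\nabla f$ then gives $s^k + \nabla f(x^k) \to s + \nabla f(\bar x)$, while continuity of $f$ combined with the \emph{attentive} convergence $\varphi(x^k) \to \varphi(\bar x)$ yields $\psi(x^k) \to \psi(\bar x)$. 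Passing to the limit in the sequential characterisation produces $s + \nabla f(\bar x) \in \partial \psi(\bar x)$, hence $\nabla f(\bar x) + \partial \varphi(\bar x) \subseteq \partial \psi(\bar x)$. The reverse inclusion again follows by the symmetric argument with $\psi + (-f)$.

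The main delicate point is justifying the use of the sequential/analytic forms, since the paper has introduced the subdifferentials only through normal cones to the epigraph; once these equivalences are taken as known, the calculations are essentially routine. A secondary subtlety is ensuring that the sequence $x^k$ in part (b) lies eventually in the neighbourhood on which $f$ is $C^1$, which is automatic by $x^k \to \bar x$, and that the convergence $\psi(x^k) \to \psi(\bar x)$ really is guaranteed — here the continuity of $f$ is essential and is precisely what distinguishes the stronger hypothesis of (b) from the merely pointwise hypothesis of (a).
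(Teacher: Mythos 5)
Your argument is correct and is essentially the standard proof of this sum rule; the paper itself does not prove the statement but cites \cite[Prop.\ 1.30]{Mordukhovich2018}, whose proof proceeds exactly as you do, via the analytic (liminf) characterisation of the Fr\'echet subdifferential for part (a) and the sequential (attentive-limit) representation of the limiting subdifferential for part (b), with the reverse inclusions obtained from the symmetric decomposition $\varphi = \psi + (-f)$. The only point worth making explicit is that the sequential representation is also applied to $\psi$, which is legitimate because $\psi$ is proper and lower semicontinuous near $\bar x$ thanks to the continuity of $f$ there.
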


\noindent
Now, consider the (constrained composite) optimization problem
\begin{equation}\label{Eq:CompOpt}
	\min \ f(x) + \varphi (x) \quad \st \quad x \in X
\end{equation}
with $ f \colon \RR^n \to \RR $ continuously differentiable,
$ \varphi \colon \RR^n \to \overline{\RR} $ proper and lower 
semicontinuous, and $ X \subseteq \RR^n $
nonempty and closed. Writing $ \psi := f + \varphi $ for
the objective function, we obtain from Theorem~\ref{Thm:SumRule}
that 
\begin{equation*}
	\hat{\partial} \psi (\bar x) = \nabla f(\bar x) + \hat{\partial}
	\varphi (\bar x) \quad \textrm{and} \quad 
	\partial \psi (\bar x) = \nabla f(\bar x) + \partial \varphi (\bar x).
\end{equation*}
Consequently, using the notion from Definition~\ref{Def:Stationarity},
we see that a feasible point $ \bar x \in X $ is M-stationary for
\eqref{Eq:CompOpt} if
\begin{equation*}
	0 \in \nabla f(\bar x) + \partial \varphi (\bar x) + 
	\Nlim_X (\bar x)
\end{equation*}
holds, whereas it is S-stationary for \eqref{Eq:CompOpt} if we have
\begin{equation*}
	0 \in \nabla f(\bar x) +  \hat \partial \varphi (\bar x) + 
	\Nhat_X (\bar x).
\end{equation*}
Note that $ \varphi (x) := \rho \norm{x}_0 $ is a proper and lower
semicontinuous function, hence our sparse optimization problem \eqref{SPO}
is a special instance of the formulation \eqref{Eq:CompOpt}. The previous
M- and S-stationary conditions then require the corresponding
subdifferentials of this function. The answer is given in the following
result, which follows from \cite{Le2012,Durea2014} and lower semicontinuity of the $\ell_0$-norm.

\begin{lem}\label{Lem:Subdiff}
Consider the function $ \varphi (x) := \rho \norm{x}_0 $ for some
$ \rho > 0 $. Then 
\begin{equation*}
	\hat \partial \varphi (x) = \partial \varphi (x) =
	\big\{ s \in \RR^n \, \big| \, s_i = 0 \textrm{ for all } i 
	\text{ with } x_i \neq 0 \big\} 
\end{equation*}
for all $ x \in \RR^n $.
\end{lem}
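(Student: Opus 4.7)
The plan is to compute $\hat{\partial}\varphi(\bar x)$ directly from its analytic characterization,
\[
\hat{\partial}\varphi(\bar x) = \Big\{ s \in \RR^n \,\Big|\, \liminf_{y \to \bar x,\, y \neq \bar x} \frac{\varphi(y) - \varphi(\bar x) - s^T(y - \bar x)}{\|y - \bar x\|} \geq 0 \Big\},
\]
and then to derive the formula for $\partial\varphi(\bar x)$ from the limiting construction. The structural observation driving both arguments is that, for $y$ in a sufficiently small neighbourhood of $\bar x$, every index $i \notin I_0(\bar x)$ still satisfies $y_i \neq 0$; hence $\|y\|_0 \geq \|\bar x\|_0$, with equality if and only if $y_i = 0$ for every $i \in I_0(\bar x)$.

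First I would establish $\hat{\partial}\varphi(\bar x) = \{ s \in \RR^n : s_i = 0 \text{ for all } i \notin I_0(\bar x)\}$. For the inclusion $\supseteq$, pick such an $s$ and split the quotient into two cases for $y$ near $\bar x$: if $y_i = 0$ for every $i \in I_0(\bar x)$, then $\varphi(y) = \varphi(\bar x)$ and $s^T(y - \bar x) = \sum_{i \notin I_0(\bar x)} s_i (y_i - \bar x_i) = 0$, so the quotient vanishes; otherwise $\varphi(y) \geq \varphi(\bar x) + \rho$ and the quotient is bounded below by $(\rho - s^T(y - \bar x))/\|y - \bar x\|$, which tends to $+\infty$. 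For the reverse inclusion, if $\bar x_i \neq 0$ and $s_i \neq 0$, testing with $y = \bar x \pm t e_i$ for small $t > 0$ leaves $\varphi$ unchanged and produces quotient values $\mp s_i$; one sign is negative, contradicting the liminf condition, so $s_i = 0$ is forced.

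Next I would identify $\partial\varphi(\bar x)$ with the same set. The inclusion $\hat{\partial}\varphi(\bar x) \subseteq \partial\varphi(\bar x)$ holds in general. For the converse, take $s \in \partial\varphi(\bar x)$ coming from sequences $x^k \to \bar x$, $\varphi(x^k) \to \varphi(\bar x)$, and $s^k \to s$ with $s^k \in \hat{\partial}\varphi(x^k)$. Since $\varphi$ takes only the finitely many values $0, \rho, 2\rho, \ldots, n\rho$ and is lower semicontinuous, $\varphi(x^k) = \varphi(\bar x)$ eventually; combined with $x^k \to \bar x$ and the support-monotonicity observation above, this forces $I_0(x^k) = I_0(\bar x)$ for all large $k$. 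Applying the already-established Fréchet formula at $x^k$ then gives $s^k_i = 0$ for all $i \notin I_0(\bar x)$, and passing to the limit yields $s_i = 0$ for the same indices. The main obstacle is precisely this stabilization of $I_0(x^k)$, which exploits both the discreteness of the range of $\varphi$ and the convergence of function values built into the definition of the limiting subdifferential; the remainder of the argument is a direct unfolding of definitions.
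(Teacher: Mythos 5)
Your proof is correct. Note that the paper itself offers no proof of this lemma at all: it simply cites Le (2012) and Durea et al.\ (2014) together with lower semicontinuity of the $\ell_0$-norm. Your argument therefore supplies a self-contained verification that the paper omits, and every step checks out: the local support monotonicity ($\|y\|_0\ge\|\bar x\|_0$ near $\bar x$, with equality iff the supports agree) cleanly yields both inclusions for the Fr\'echet subdifferential, and the discreteness of the range $\{0,\rho,\dots,n\rho\}$ forces the stabilization $I_0(x^k)=I_0(\bar x)$ that closes the limiting case. Two small points worth making explicit if this were written out: (i) you work with the $\liminf$ characterization of $\hat\partial\varphi$ and the sequential representation $\partial\varphi(\bar x)=\mathrm{Limsup}_{x\to_\varphi\bar x}\hat\partial\varphi(x)$, whereas the paper defines both subdifferentials via normal cones to the epigraph; these coincide for proper lsc functions by standard results (e.g.\ Mordukhovich, Thm.\ 1.89, or Rockafellar--Wets, 8.9 and 8.46), so a one-line citation bridges the definitions. (ii) The appeal to lower semicontinuity in the stabilization step is redundant --- convergence of the function values within a discrete range already forces eventual equality --- but it is harmless.
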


\noindent 
Hence the limiting and Fr\'echet subdifferentials of $ \varphi (x) = \rho \norm{x}_0 $ coincide and are independent of the particular value of
the parameter $ \rho > 0 $. In order to apply the above
M- and S-stationarity conditions of problem \eqref{Eq:CompOpt}
to our setting from \eqref{SPO}, it remains to compute the 
corresponding normal cones $ \Nlim_X (\bar x) $ and 
$ \Nhat_X (\bar x) $.
This will be done by using some (problem-tailored) constraint
qualifications, see Section~\ref{Sec:CQs}.

\section{Reformulation of Sparse Optimization Problem}\label{Sec:Reformulation}

In the previous work \cite{KanzowSchwartzWeiss2022}, we established an equivalence regarding local and (up to a scaling of $\rho$) global minima 
between problem \eqref{SPO} and the following reformulation of \eqref{SPO} based on an auxiliary variable $y$:
\begin{equation}\label{Eq:ReformOld}
        \min_{x,y} f(x) + \frac{\rho}{2} y^T(y-2e) \quad \st \quad x\in X,
        \ x\circ y = 0,
\end{equation}
where $e = (1,1,...,1)^T \in \RR^{n} $. The aim of this section is 
to generalize this approach.

To this end, we introduce a penalty function $p^{\rho} \colon \RR^n \to \RR $
(usually depending on the parameter $ \rho > 0 $) given by 
\begin{equation}\label{Eq:p-rho}
   p^\rho(y) = \sum_{i=1}^n p^{\rho}_i(y_i)
\end{equation}
with each $ p^{\rho}_i \colon \RR \to \RR $ being such that it satisfies the following conditions:
\begin{itemize}
    \item[(P.1)] $ p^\rho_i$ is convex and attains a unique minimum
    (possibly depending on $ \rho $) at some point $ s_i^\rho > 0 $;
    \item[(P.2)] $p^\rho_i(0) - p^\rho_i(s_i^\rho) = \rho$;
    \item[(P.3)] $p^\rho_i$ is sufficiently smooth.
\end{itemize}
Assumption (P.1)  simply states that $ p_i^{\rho} $ is a convex function which attains its unique minimum in the open
interval $ (0, \infty) $. We denote this minimum by $ s_i^\rho > 0$.
Furthermore, we write
\begin{equation}\label{Eq:prho-Min}
	m_i^\rho := p^\rho_i(s_i^\rho) \quad \textrm{and} \quad M^\rho := \sum_{i = 1}^n m_i^\rho
\end{equation}
for the corresponding minimal function values of $ p_i^\rho $ and 
$ p^\rho $, respectively. Condition (P.2) is a scaling assumption that
can always be guaranteed by multiplication of $ p_i^{\rho} $ with
a suitable factor, whereas condition (P.3) is a smoothness condition,
with the degree of smoothness depending on the particular situation
which should be clear from the corresponding context. In particular,
for the reformulation of the sparse optimization problem \eqref{SPO}
within this section, it will be enough to have each $ p_i^\rho $
continuous (which is automatically satisfied by the convexity
assumption). The subsequent discussion of suitable constraint
qualifications and stationarity concepts requires each $ p_i^\rho $
to be continuously differentiable, whereas in the second-order theory,
$ p_i^\rho $ needs to be twice continuously differentiable.

In the following, we provide some examples of suitable functions
$ p_i^{\rho} $.

\begin{example}\label{Ex:p-rho}
The following functions $ p_i^\rho \colon \RR \to \RR $ satisfy
conditions (P.1)--(P.3):
\begin{itemize}
	\item[(a)] The function $ p_i^{\rho} (y_i) := \rho y_i 
	(y_i-2) $ is convex (in fact, uniformly convex), satisfies all smoothness
	requirements, and attains a unique minimum at $ s_i^\rho := 1 $
	(which, in this case, is independent of $ \rho $). 
	\item[(b)] The function $p_i^\rho(y_i) = \frac{1}{2}(y_i - \sqrt{2 \rho})^2$ also satifies all of the above
	requirements and can be seen as a somewhat natural choice, since we want $y_i^*$ to attain some positive value $s_i^\rho$ for $x_i^*$ to vanish. This particular choice simply penalizes the deviation in $y_i^*$ from $s_i^\rho = \sqrt{2 \rho}$, where $s_i^\rho$ was selected in accordance to (P.2).
	\item[(c)] The shifted absolute-value function $ p_i^{\rho}
	(y_i) := \rho | y_i - 1 | $ also satisfies (P.1)--(P.3), though (P.3)
	only holds with continuity, which is sufficient for the considerations
	within this section. Using a Huber-type smoothing (together with
	a suitable scaling so that (P.2) holds), we can easily construct
	a continuously differentiable version of this function satisfying
	(P.1)--(P.3).
\end{itemize}
\end{example}

\noindent
It is clear that several other examples satisfying (P.1)--(P.3) can be
constructed easily. In the following, we assume that $ p^{\rho} $ is given
by \eqref{Eq:p-rho} with each term $ p_i^{\rho} $ satisfying conditions
(P.1)--(P.3), where only continuity is required in (P.3) within
this section. We then consider the reformulation
\begin{equation}
    \tag{SPOref} \label{SPOref}
        \min_{x,y} f(x) + p^\rho(y) \quad \st \quad x\in X, \ x\circ y = 0
\end{equation}
of the sparse optimization problem \eqref{SPO}
(the acronym "SPOref" stands for "SPO-reformula\-tion"). Note that, for the 
choice of $ p_i^\rho $ as in Example~\ref{Ex:p-rho} (a), we reobtain
the previous formulation from \eqref{Eq:ReformOld} (except for the
factor $ \frac{1}{2} $ which would destroy property (P.2)).

The aim of this section is to show that problem \eqref{SPOref} is indeed a reformulation of the sparse optimization problem in the sense that it 
has the same local and global minima. For this purpose, we begin with
the following preliminary observation.

\begin{lem}\label{0normprop}
Let $ p^\rho $ be given by \eqref{Eq:p-rho} with each $ p_i^\rho $ satisfying
properties (P.1)--(P.3), and let $ M^{\rho} $ be defined by
\eqref{Eq:prho-Min}. Then the following statements hold:
\begin{itemize}
	\item[(a)] The inequality $ \rho \norm{x}_0 \le p^\rho(y) - M^\rho $
	   holds for any feasible point $ (x,y) $ of \eqref{SPOref}.
	\item[(b)] Equality $ \rho \norm{x}_0 = p^\rho(y) - M^\rho $
	  holds for a feasible point $ (x,y) $ of \eqref{SPOref} if and only if $y_i = s_i^\rho$ for all $i \in I_0(x)$.
	\item[(c)] If $(x^*,y^*)$ is a local minimum of \eqref{SPOref}, we have $y_i^* = s_i^\rho$ for all $i \in I_0(x^*)$.
\end{itemize}
\end{lem}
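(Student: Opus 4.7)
The plan is to prove the three parts in order, leveraging the complementarity constraint $x \circ y = 0$ together with properties (P.1)--(P.3).

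For part (a), I would split the sum defining $p^\rho(y) - M^\rho = \sum_{i=1}^n \bigl(p_i^\rho(y_i) - m_i^\rho\bigr)$ according to the index set $I_0(x)$. The feasibility condition $x \circ y = 0$ forces $y_i = 0$ whenever $x_i \neq 0$, so for every $i \notin I_0(x)$ property (P.2) yields $p_i^\rho(y_i) - m_i^\rho = p_i^\rho(0) - p_i^\rho(s_i^\rho) = \rho$. For every $i \in I_0(x)$, the minimality of $m_i^\rho$ from (P.1) gives $p_i^\rho(y_i) - m_i^\rho \geq 0$. Adding both contributions yields $p^\rho(y) - M^\rho \geq \rho \cdot |\{i : x_i \neq 0\}| = \rho \norm{x}_0$, which is the desired inequality.

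For part (b), I would observe that equality in the chain from (a) occurs precisely when every nonnegative summand $p_i^\rho(y_i) - m_i^\rho$ with $i \in I_0(x)$ vanishes, i.e., when $p_i^\rho(y_i) = m_i^\rho$ for each such $i$. Since (P.1) asserts that $s_i^\rho$ is the \emph{unique} minimizer of $p_i^\rho$, this is equivalent to $y_i = s_i^\rho$ for every $i \in I_0(x)$. The converse direction just inserts these values into the sum and uses (P.2) on the complementary index set.

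For part (c), fix a local minimum $(x^*, y^*)$ and any index $i \in I_0(x^*)$. Because $x_i^* = 0$, replacing the $i$-th component of $y^*$ by an arbitrary scalar $t \in \RR$ and leaving all other coordinates untouched yields a new pair $(x^*, \tilde{y})$ that still satisfies $x^* \in X$ and $x^* \circ \tilde{y} = 0$, hence is feasible for \eqref{SPOref}. The local minimality of $(x^*, y^*)$ therefore forces $y_i^*$ to be a local minimizer of the univariate function $t \mapsto p_i^\rho(t)$. Since $p_i^\rho$ is convex by (P.1), every local minimizer is global, and uniqueness of the minimizer forces $y_i^* = s_i^\rho$.

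I do not anticipate any real obstacle: each item is essentially bookkeeping once the index-set decomposition in (a) is in place. The one point that deserves explicit mention is the feasibility-preserving perturbation in (c) — namely, that varying only coordinates of $y$ indexed by $I_0(x^*)$ neither leaves $X$ nor violates the Hadamard-product constraint — so that local minimality genuinely restricts $y^*$ on $I_0(x^*)$ and the convexity argument of (P.1) can be applied componentwise.
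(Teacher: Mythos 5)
Your proof is correct and follows essentially the same route as the paper: the index-set decomposition with (P.2) on $I_0(x)^c$ and minimality on $I_0(x)$ for (a), the vanishing-of-nonnegative-summands argument plus uniqueness of $s_i^\rho$ for (b), and the componentwise reduction of (c) to minimizing $p_i^\rho$ over the free coordinates. Your explicit remark in (c) that convexity upgrades the local minimizer of $t \mapsto p_i^\rho(t)$ to the unique global one is a detail the paper leaves implicit, but it is the same argument.
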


\begin{proof}
(a) The claim follows from
\begin{align*}
	\rho \norm{x}_0 & = \sum_{i \not\in I_0(x)} \rho 
	= \sum_{i \not\in I_0(x)} \big( p_i^\rho (0) - p_i^\rho (s_i^\rho) 
	\big)
	= \sum_{i \not\in I_0(x)} \big( p_i^\rho (y_i) - p_i^\rho (s_i^\rho ) 
	\big) \\
	& \leq \sum_{i \not\in I_0(x)} \big( p_i^\rho (y_i) - p_i^\rho(s_i^\rho) 
	\big) + \sum_{i \in I_0(x)} \big( p_i^\rho (y_i) - p_i^\rho(s_i^\rho) 
	\big) \\
	& = \sum_{i=1}^n \big( p_i^\rho (y_i) - p_i^\rho(s_i^\rho) 
	\big)
	= p^\rho (y) - M^\rho,
\end{align*}	
where the first identity results from the definition of $ \norm{x}_0 $
together with the one of the index set $ I_0(x) $, the second equation
exploits the scaling property (P.2), the third equation comes from 
the fact that we necessarily have $ y_i = 0 $ for all $ i \not\in 
I_0(x) $ due to the constraints $ x \circ y = 0 $, the inequality
takes into account that $ p_i^\rho (y_i) - p_i^\rho (s_i^\rho) \geq 0 $
due to the minimality of $ s_i^\rho $, and the remaining part is simply
the definition of $ p^\rho $ and $ M^\rho $. \medskip

\noindent 
(b) Observe that the previous chain of equations and inequalities 
holds with equation if and only if $ p_i^\rho (y_i) = p_i^\rho (s_i^\rho) $
for all $ i \in I_0(x) $. Since the minimum $ s_i^\rho $ is unique
by condition (P.1), this holds if and only if $ y_i = s_i^\rho $ for all
$ i \in I_0(x) $, hence statement (b) holds. \medskip

\noindent 
(c) This statement follows from the observation that, for any 
$ i \in I_0(x^*) $, the auxiliary variable $ y_i^* $ has to solve the 
problem
\begin{equation*}
	\min_{y_i} \ p_i^\rho (y_i)
\end{equation*}
(note that $ y_i = 0 $ is fixed for each $ i \not\in I_0(x^*) $ due
to the complementarity-type constraint $ x \circ y = 0 $, and that
the objective function is separable in each $ y_i $).
\end{proof}

\noindent
Note that Lemma~\ref{0normprop} together with the constraint 
$ x \circ y = 0 $
implies that, for $ (x^*, y^*) $ being a local minimum of \eqref{SPOref},
we necessarily have
\begin{equation}\label{Eq:y-star}
	y^*_i = \begin{cases} s_i^\rho, & \textrm{for } i \in I_0(x^*), \\ 
	0, & \text{otherwise}.
    \end{cases}
\end{equation}
In particular, $ y^* $ is uniquely defined by $ x^* $. Exploiting 
this observation, we are able to formulate an equivalence result
between the local minima of the two problems \eqref{SPO} and \eqref{SPOref}.

\begin{thm}\label{Thm:EquivLocalMinima}
A feasible $x^*$ for \eqref{SPO} is a local minimum of \eqref{SPO} if and
only if $(x^*,y^*)$ with $ y^* $ given by \eqref{Eq:y-star} is a local
minimum of \eqref{SPOref}. 
\end{thm}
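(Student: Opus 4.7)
The plan is to use Lemma~\ref{0normprop} as the main technical device and reduce the equivalence to an inequality comparing $f(x)+\rho\|x\|_0$ with $f(x)+p^\rho(y)$ up to the constant $M^\rho$. First, I would record the basic identity
$$f(x^*) + p^\rho(y^*) = f(x^*) + \rho\|x^*\|_0 + M^\rho$$
which follows from part~(b) of Lemma~\ref{0normprop} together with the definition of $y^*$ in \eqref{Eq:y-star}, and note that this definition also ensures $x^* \circ y^* = 0$, so that $(x^*, y^*)$ is indeed feasible for \eqref{SPOref}.

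For the direction ``$\Rightarrow$'', suppose $x^*$ is a local minimum of \eqref{SPO}, and take any $(x,y)$ feasible for \eqref{SPOref} sufficiently close to $(x^*, y^*)$. Then $x \in X$ and $x$ is close to $x^*$, so local minimality of $x^*$ gives $f(x) + \rho\|x\|_0 \geq f(x^*) + \rho\|x^*\|_0$. Combining this with Lemma~\ref{0normprop}(a) applied to $(x,y)$ yields the chain
$$f(x) + p^\rho(y) \;\geq\; f(x) + \rho\|x\|_0 + M^\rho \;\geq\; f(x^*) + \rho\|x^*\|_0 + M^\rho \;=\; f(x^*) + p^\rho(y^*),$$
so $(x^*, y^*)$ is a local minimum of \eqref{SPOref}.

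For ``$\Leftarrow$'', I would argue by contradiction. Assume $(x^*, y^*)$ is a local minimum of \eqref{SPOref} but $x^*$ is not a local minimum of \eqref{SPO}. Then there exists $\{x^k\} \subseteq X$ with $x^k \to x^*$ and
$$f(x^k) + \rho\|x^k\|_0 < f(x^*) + \rho\|x^*\|_0 \quad \forall k \in \NN.$$
By lower semicontinuity of $\|\cdot\|_0$, we have $\|x^k\|_0 \geq \|x^*\|_0$ for large $k$. I would now split into two cases. If a subsequence satisfies $\|x^k\|_0 = \|x^*\|_0$, then $I_0(x^k) = I_0(x^*)$ for those $k$, and hence $(x^k, y^*)$ is feasible for \eqref{SPOref} (since $y^*$ is supported on $I_0(x^*)$). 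This pair converges to $(x^*, y^*)$, and applying Lemma~\ref{0normprop}(b) to both sides produces
$$f(x^k) + p^\rho(y^*) = f(x^k) + \rho\|x^k\|_0 + M^\rho < f(x^*) + \rho\|x^*\|_0 + M^\rho = f(x^*) + p^\rho(y^*),$$
contradicting local minimality. In the remaining case $\|x^k\|_0 \geq \|x^*\|_0 + 1$ for all large $k$, continuity of $f$ gives $|f(x^k) - f(x^*)| < \rho$ for large $k$, so $f(x^k) + \rho\|x^k\|_0 \geq f(x^k) + \rho + \rho\|x^*\|_0 > f(x^*) + \rho\|x^*\|_0$, again a contradiction.

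The main obstacle is the backward implication: given an approximating sequence $x^k \to x^*$ that beats $x^*$ in \eqref{SPO}, one cannot simply assign to each $x^k$ its ``own'' $y^k$ via \eqref{Eq:y-star} because the support pattern may change along the sequence, so the resulting $y^k$ need not converge to $y^*$. The case split resolves this: when the support stabilizes we can reuse $y^*$ itself, and when $\|x^k\|_0$ strictly exceeds $\|x^*\|_0$ we exploit the integer jump of at least one in the $\ell_0$-term against the vanishing change of $f$.
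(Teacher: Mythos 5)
Your proposal is correct and follows essentially the same route as the paper's proof: the same chain of inequalities via Lemma~\ref{0normprop}(a),(b) for the forward direction, and the same contradiction argument with the identical case split (support stabilizing on a subsequence vs.\ $\norm{x^k}_0 \geq \norm{x^*}_0 + 1$) for the converse. The only difference is that you spell out slightly more explicitly why $(x^k,y^*)$ is feasible in the first case, which is a harmless elaboration.
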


\begin{proof}
The proof is very similar to the corresponding one 
in our previous work \cite{KanzowSchwartzWeiss2022}, and is presented
here for the sake of completeness and because part of the proof will be
used also in the proof of the subsequent result

Let $x^*$ be local minimum of \eqref{SPO}, and let $y^*$ be defined by
\eqref{Eq:y-star}. We then obtain
\begin{equation}\label{Eq:Chain}
	f(x^*) + p^\rho(y^*) = f(x^*) + \rho \norm{x^*}_0 + M^\rho
	\le f(x) +\rho \norm{x}_0 + M^\rho
	\le f(x) + p^\rho(y)
\end{equation}
for all feasible $(x,y)$ with $x$ sufficiently close to $x^*$, where the first equality results from  Lemma~\ref{0normprop} (b), the subsequent
inequality exploits the local minimality of $ x^* $ for the program
\eqref{SPO}, and the final inequality follows from 
Lemma~\ref{0normprop} (a).

Conversely, let $ (x^*, y^*) $ be a local minimum of \eqref{SPOref}.
Recall that $ y^* $ is then given by \eqref{Eq:y-star}, cf.\
Lemma~\ref{0normprop} (c).
Assume that $ x^* $ is not a local minimum of
\eqref{SPO}. Then there exists a sequence $ \{ x^k \} \subseteq X $ such that
$ x^k \to x^* $ and
\begin{equation}\label{Eq:ContraLocal}
	f(x^k) + \rho \norm{x^k}_0 < f(x^*) + \rho \norm{x^*}_0 \quad
	\forall k \in \mathbb{N}.
\end{equation}
Note that $ \norm{x^k}_0 \geq \norm{x^*}_0 $ holds for all $ k $ sufficiently large. Hence we either have a subsequence $ \{ x^k \}_K $ 
such that $ \norm{x^k}_0 = \norm{x^*}_0 $ for all $ k \in K $, or $ \norm{x^*}_0 + 1 \leq \norm{x^k}_0 $ is true for almost all $ k \in
\mathbb{N} $. In the former case, it follows that $ (x^k, y^*) $ is 
feasible for \eqref{SPOref}, hence we obtain from Lemma~\ref{0normprop} (b)
and the minimality of $ (x^*, y^*) $ for \eqref{SPOref} that
\begin{align*}
	f(x^k) + \rho \norm{x^k}_0 + M^\rho & = f(x^k) + \rho \norm{x^*}_0 + M^\rho \\
	& = f(x^k) + p^\rho(y^*) \\
	& \geq f(x^*) + p^\rho(y^*) \\
	& = f(x^*) + \rho \norm{x^*}_0 + M^\rho,
\end{align*}
which contradicts \eqref{Eq:ContraLocal}. Otherwise, we have 
$ \norm{x^*}_0 + 1 \leq \norm{x^k}_0 $  and, by continuity, also
$ f(x^*) \leq f(x^k) + \rho $ for all $ k \in \mathbb{N} $ 
sufficiently large, which, in turn, gives
\begin{equation*}
	f(x^k) + \rho \norm{x^k}_0 \geq f(x^k) + \rho + \rho \norm{x^*}_0 \geq 
	f(x^*) + \rho \norm{x^*}_0.
\end{equation*}
Hence, also in this situation, we have a contradiction to \eqref{Eq:ContraLocal}.
\end{proof}

\noindent
Note that the full equivalence of the set of local minima is quite
interesting, especially since a similar result does not hold for
a somewhat related reformulation of optimization problems with
cardinality constraints, see \cite{BurdakovKanzowSchwartz2016}.

The next result states the equivalence between global minima of 
the two problems \eqref{SPO} and \eqref{SPOref}. This result, however,
is less surprising than the previous one regarding local minima
(and holds, in particular, also for the previously mentioned
cardinality-constrained problems discussed in \cite{BurdakovKanzowSchwartz2016}).
\begin{thm}\label{Thm:EquivGlobalMinima}
A feasible $x^*$ for \eqref{SPO} is a global minimum of \eqref{SPO} if and
only if $(x^*,y^*)$ with $ y^* $ given by \eqref{Eq:y-star} is a global
minimum of \eqref{SPOref}.
\end{thm}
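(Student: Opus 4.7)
The plan is to exploit Lemma~\ref{0normprop} as a bridge between the two objective functions: since $y^*$ from \eqref{Eq:y-star} is precisely the choice that attains equality in Lemma~\ref{0normprop}(a), one can freely translate between $f(x) + p^\rho(y)$ and $f(x) + \rho\|x\|_0 + M^\rho$ whenever needed. The key difference to the local case treated in Theorem~\ref{Thm:EquivLocalMinima} is that global optimality allows us to choose the auxiliary variable $y$ independently for each competitor $x$, so no sparsity-pattern comparison along sequences is required and both directions become symmetric.

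For the ``only if'' direction, I take $x^*$ to be a global minimum of \eqref{SPO} and let $y^*$ be defined by \eqref{Eq:y-star}. Lemma~\ref{0normprop}(b) then gives $f(x^*) + p^\rho(y^*) = f(x^*) + \rho\|x^*\|_0 + M^\rho$. For any feasible $(x,y)$ of \eqref{SPOref}, the vector $x$ lies in $X$, so global minimality of $x^*$ in \eqref{SPO} yields $f(x^*) + \rho\|x^*\|_0 \le f(x) + \rho\|x\|_0$, and Lemma~\ref{0normprop}(a) applied to $(x,y)$ gives the second inequality in
\begin{equation*}
    f(x^*) + p^\rho(y^*) \;\le\; f(x) + \rho\|x\|_0 + M^\rho \;\le\; f(x) + p^\rho(y),
\end{equation*}
which is exactly global minimality of $(x^*,y^*)$ for \eqref{SPOref}.

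For the converse, assume $(x^*,y^*)$ with $y^*$ given by \eqref{Eq:y-star} is a global minimum of \eqref{SPOref}. Given any feasible $x \in X$ for \eqref{SPO}, I construct an auxiliary vector $\tilde y \in \RR^n$ componentwise by $\tilde y_i := s_i^\rho$ for $i \in I_0(x)$ and $\tilde y_i := 0$ otherwise. Then $x \circ \tilde y = 0$, so $(x,\tilde y)$ is feasible for \eqref{SPOref}, and Lemma~\ref{0normprop}(b) yields $p^\rho(\tilde y) = \rho\|x\|_0 + M^\rho$. Combining this identity with the analogous one at $(x^*,y^*)$ and the global minimality of $(x^*,y^*)$, I obtain
\begin{equation*}
    f(x^*) + \rho\|x^*\|_0 + M^\rho \;=\; f(x^*) + p^\rho(y^*) \;\le\; f(x) + p^\rho(\tilde y) \;=\; f(x) + \rho\|x\|_0 + M^\rho,
\end{equation*}
and cancellation of the constant $M^\rho$ delivers the claim.

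I do not expect any real obstacle: the entire argument reduces to applying Lemma~\ref{0normprop} twice, once in each direction. The single point to watch is the construction of $\tilde y$ in the converse, which is exactly what removes the sparsity-pattern complication (the ``$\|x^k\|_0 > \|x^*\|_0$'' case) that forced the extra bookkeeping in the proof of Theorem~\ref{Thm:EquivLocalMinima}. Everything else is arithmetic with the scaling constant $M^\rho$ guaranteed by property (P.2).
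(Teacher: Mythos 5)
Your proof is correct and follows essentially the same route as the paper: the forward direction is the chain of inequalities from the proof of Theorem~\ref{Thm:EquivLocalMinima} applied globally, and the converse constructs the competitor $\tilde y$ via \eqref{Eq:y-star} and applies Lemma~\ref{0normprop}(b) twice before cancelling $M^\rho$. No gaps.
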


\begin{proof}
First assume that $ x^* $ is a global minimum of \eqref{SPO}. Then
the chain of inequalities from \eqref{Eq:Chain} holds for all $ (x,y) $ 
feasible for \eqref{SPOref}, showing that $ (x^*,y^*) $ is a global
minimum of \eqref{SPOref}.

Conversely, let $ (x^*,y^*) $ solve problem \eqref{SPOref} globally,
and let $ x \in X $ be an arbitrary feasible point of \eqref{SPO}.
We then define a vector $ y \in \RR^n $ similar to \eqref{Eq:y-star}
so that $ (x,y) $ is feasible for \eqref{SPOref}. Using the
optimality of $ (x^*,y^*) $ and exploiting Lemma~\ref{0normprop} (b) 
twice, we then obtain 
\begin{equation*}
   f(x^*) + \rho \norm{x^*}_0 + M^{\rho} = f(x^*) + p^\rho (y^*) \leq
   f(x) + p^\rho (y) = f(x) + \rho \norm{x}_0 + M^\rho.
\end{equation*}
Subtracting the constant $ M^\rho $ from both sides shows that 
$ x^* $ is a global minimum of problem \eqref{SPO}.
\end{proof}

\noindent
Altogether, the results from this section show that we can 
reformulate the nonsmooth and even discontinuous sparse 
optimization problem \eqref{SPO} as a continuous problem without
any loss of information regarding local or global minima. The
main difficulty of the reformulation \eqref{SPOref} is, of course,
the constraint $ x \circ y = 0 $ which is not easy to deal with, in 
particular, it violates most of the standard constraint qualifications
like the Abadie CQ and, therefore, all constraint qualifications which
are stronger than Abadie. We are going to deal with this difficulty
in our subsequent sections by introducing some problem-tailored
CQs and by considering a particular method for the solution of
problem \eqref{SPOref} which exploits the structure of
the underlying problem.

\section{Constraint Qualifications and Optimality Conditions}\label{Sec:CQs}

The aim of this section is to introduce some suitable (problem-tailored)
constraint qualifcations which are then used to obtain corresponding
optimality conditions. Note that this requires that the feasible set
$ X $ has an explicit representation by equality and/or inequality 
constraints. In view of our subsequent algorithmic approach for 
solving the reformulation \eqref{SPOref} of the sparse optimization
problem \eqref{SPO}, we assume from now on that the feasible set
is given by
\begin{equation}\label{Eq:XRep}
	X := \big\{ x \in \RR^n \, \Big| \, g(x) \leq 0, \ h(x) = 0, \ x \geq 0
	\big\}
\end{equation}
with some smooth functions $ g \colon \RR^n \to \RR^m $ and 
$ h \colon \RR^n \to \RR^p $. Hence, in addition to some standard
constraints, we assume explicitly that the inequalities contain
nonnegativity constraints, and that these nonnegativity constraints
are separated from the remaing inequalities $ g(x) \leq 0 $. This
representation turns out to be very useful in our subsequent
exact penalty approach.

In order to derive some problem-tailored constraint qualifications,
we follow a standard approach that is also used in the context of
mathematical programs with equilibrium constraints, see 
\cite{LuoPangRalph1996}, or optimization problems with switching 
constraints, see
\cite{KanzowMehlitzSteck2019,Mehlitz2020a}. To this end, let
$ x^* $ be a local minimum of problem \eqref{SPO} with $ X $ defined
by \eqref{Eq:XRep}. 
This implies that $ x^* $ is also a local minimum of the
\emph{tightend nonlinear program}
\begin{equation}
	\min_x f(x) \quad \st \quad 
	g(x) \leq 0, \ h(x)= 0, \ x_i = 0, \ i \in I_0 (x^*)\tag{TNLP($x^*$)} \label{TNLP}
\end{equation}
since, locally, the feasible set of \eqref{TNLP} is a subset of $ X $ and 
$ x^* $, by definition, is still feasible for this problem. Note,
however, that \eqref{TNLP} depends on $ x^* $ (via the index set 
$ I_0(x^*)) $ and can therefore be used only as a theoretical tool.
Here, we exploit this observation to formulate the subsequent
constraint qualifications.

\begin{defn}\label{Def:SP-CQs}
Let $ x^* \in X $ with $ X $ being defined by \eqref{Eq:XRep}.
We say that $ x^* $ satisfies
\begin{itemize}
    \item[(a)] \emph{SP-LICQ} (sparse LICQ) if the gradients
    \begin{equation*}
       \nabla g_i(x^*), \ i \in I_g(x^*), \quad \nabla h_j(x^*), \ j = 1,...,p, \quad e_i, \ i \in I_0(x^*)
    \end{equation*}
    are linearly independent.
    \item[(b)] \emph{SP-MFCQ} (sparse MFCQ) if the gradients 
    \begin{equation*}
    \nabla g_i(x^*), \ i \in I_g(x^*), \quad  \nabla h_j(x^*), \ j = 1,...,m, \quad e_i, \ i \in I_0(x^*), \end{equation*}
    are positively linearly independent.
    \item[(c)] \emph{SP-RCPLD} (sparse RCPLD) if there is a neighborhood $U$ of $x^*$ such that, for any index sets $I_1 \subseteq \{1,...,p\} , \ I_2 \subseteq I_0(x^*)$ with the gradients 
    $ \{ \nabla h_i(x^*) \}_{I_1}, \ \{ e_i\}_{i \in I_2}$
    forming a basis of the subspace generated by all gradients 
    $ \nabla h_i(x^*) \ (i = 1, \ldots, p), \ e_i \ (i \in I_0(x^*)) $,
    the following holds for all $ x \in U $.
    \begin{enumerate}
        \item $\left( \{\nabla h_i(x) \}_{i=1}^p, \ \{e_i\}_{i \in I_0(x^*)} \right)$ is of constant rank for all $x \in U$
        \item For every $J \subseteq I_g(x^*)$, if $\left(\{\nabla h_i(x^*)\}_{I_1}, \{e_i\}_{I_2}, \{\nabla g_i(x^*)\}_{J}\right)$ is positive-linearly dependent, then $\left(\{\nabla h_i(x)\}_{I_1}, \{e_i\}_{I_2}, \{\nabla g_i(x)\}_{J}\right)$ is linearly dependent for every $x \in U$.
    \end{enumerate}
\end{itemize}
\end{defn}

\noindent 
Observe that SP-LICQ, SP-MFCQ and SP-RCPLD correspond to standard LICQ,
standard MFCQ and standard RCPLD for the tightened problem \eqref{TNLP}
(with RCPLD being a constraint qualification introduced in
\cite{AHSS-12}). Hence, 
it is clear how to formulate further constraint qualifications based
on this relation. In particular, standard theory on constraint
qualifications therefore show that each of the problem-tailored
constraint qualifications from Definition~\ref{Def:SP-CQs} imply
that (standard) Abadie CQ holds for the tightened problem
\eqref{TNLP}. We now use this observation in order to derive a
suitable optimality condition for the sparse optimization problem
\eqref{SPO}.

To this end, let $ x^* $ be a local minimum of \eqref{SPO},
and consider the corresponding tightened problem \eqref{TNLP}.
Assume further that any of the problem-tailored constraint qualifications
from Definition~\ref{Def:SP-CQs} hold. Let us denote by
\begin{equation*}
   X_{\textrm{TNLP}(x^*)} \textrm{ the feasible set of } \eqref{TNLP},
\end{equation*}
and by 
\begin{align*}
	\mathcal{T}_{X_{\textrm{TNLP}(x^*)}} (x^*), \quad 
	\mathcal{L}_{X_{\textrm{TNLP}(x^*)}} (x^*), \quad 
	\Nhat_{X_{\textrm{TNLP}(x^*)}} (x^*), \quad \textrm{and} \quad
	\Nlim_{X_{\textrm{TNLP}(x^*)}}(x^*)
\end{align*}
the corresponding tangent cone, linearization cone, Fr\'echet normal cone, and limiting normal cone of $ x^* \in X_{\textrm{TNLP}(x^*)} $.
Since each of the SP-CQs from Definition~\ref{Def:SP-CQs} implies that
the standard Abadie and, hence, the standard Guignard CQ holds 
at $ x^* \in X_{\textrm{TNLP}(x^*)} $, we obtain
\begin{equation*}
	\mathcal{T}_{X_{\textrm{TNLP}(x^*)}} (x^*)^{\circ} =
	\mathcal{L}_{X_{\textrm{TNLP}(x^*)}} (x^*)^{\circ}.
\end{equation*}
Now, the linearization cone $ \mathcal{L}_{X_{\textrm{TNLP}(x^*)}} (x^*) $ is a polyhedral convex cone, and standard results
from convex analysis imply that its polar is given by
\begin{equation*}
	\mathcal{L}_{X_{\textrm{TNLP}(x^*)}} (x^*)^{\circ} = 
	\Big\{ d \, \Big| \, d = \sum_{i \in I_g(x^*)} \lambda_i \nabla g_i(x^*) + \sum_{j=1}^p \mu_j \nabla h_j(x^*) + 
	\sum_{i \in I_0 (x^*)} \gamma_i e_i, \ \lambda_i \geq 0 \
	(i \in I_0(x^*))\Big\}	.
\end{equation*}
Since, by definition, we have 
\begin{equation*}
   \Nhat_{X_{\textrm{TNLP}(x^*)}} (x^*) = \mathcal{T}_{X_{\textrm{TNLP}(x^*)}} (x^*)^{\circ} =
   \mathcal{L}_{X_{\textrm{TNLP}(x^*)}} (x^*)^{\circ},
\end{equation*}
this shows that
\begin{equation*}
	\Nhat_{X_{\textrm{TNLP}(x^*)}} (x^*) =
	\Big\{ d \, \Big| \, d = \sum_{i \in I_g(x^*)} \lambda_i \nabla g_i(x^*) + \sum_{j=1}^p \mu_j \nabla h_j(x^*) + 
	\sum_{i \in I_0 (x^*)} \gamma_i e_i, \ \lambda_i \geq 0 \
	(i \in I_0(x^*))\Big\}.
\end{equation*}
Now, using the expression of the Fr\'echet normal cone for the
tightened nonlinear program \eqref{TNLP}, the notation of an 
S-stationary point from Section~\ref{Sec:Background} for a general
optimization problem as in \eqref{Eq:CompOpt}, and taking into account the
formula for the Fr\'echet subdifferential of the function
$ \varphi (x) := \rho \norm{x}_0 $ from Lemma~\ref{Lem:Subdiff},
it follows that the local minimum $ x^* $ satisfies the following
S-stationary conditions under any of the SP-CQs from Definition~\ref{Def:SP-CQs}, where, for simplicity of notation,
\begin{equation}\label{Eq:SP-Lagrangian}
	L^{SP} (x, \lambda, \mu) := f(x) + \lambda^T g(x) + \mu^T h(x)
\end{equation}
denotes a mapping that we call the \emph{SP-Lagrangian} of
problem \eqref{SPO} with the feasible set $ X $ given by
\eqref{Eq:XRep} (note that this SP-Lagrangian neither includes
the $ \ell_0 $-term of the original objective function nor any
term resulting from the nonnegativity constraints).

\begin{defn}\label{Def:S-stationary}
Let $ x^* \in X $ be feasible for the sparse optimization problem 
\eqref{SPO}, where $ X $ is given by \eqref{Eq:XRep}. Then we call
$ x^* $ an \emph{S-stationary point} of \eqref{SPO} if there exist
multipliers $ \lambda^* \in \RR^m $ and $ \mu^* \in \RR^p $ such that 
\begin{align*}
	\nabla_{x_i} L^{SP} (x^*, \lambda^*, \mu^*) & = 0, \quad i \not\in I_0(x^*), \\
	h(x^*) & = 0, \\
	\lambda^* \geq 0, \ g(x^*) \leq 0, \ \lambda^* \circ g(x^*) & = 0.
\end{align*}
\end{defn}

\noindent 
Note that the previous derivation shows that S-stationarity is a 
necessary optimality condition for a local minimum $ x^* $ of 
problem \eqref{SPO} provided that a suitable (problem-tailored) constraint
qualification holds. In a similar way, one can also derive an
M-stationary condition. However, in this particular case, there is
no difference between M- and S-stationarity due to the fact that
the Fr\'echet and limiting subdifferentials of the function
$ \varphi (x) = \rho \norm{x}_0 $ coincide, cf.\ Lemma~\ref{Lem:Subdiff}.

Before providing another interpretation of S-stationary points,
we consider a slightly different reformulation of problem \eqref{SPO}
with $ X $ given by \eqref{Eq:XRep}. Since, by \eqref{Eq:y-star},
any local minimum $ (x^*,y^*) $ of the reformulated problem 
\eqref{SPOref} automatically satisfies $ y^* \geq 0 $, it follows that
\eqref{SPOref} and, hence, \eqref{SPO} itself is totally equivalent
to the program
\begin{equation}
	\tag{SPOcp} \label{SPOcp}
	\min_{x,y} f(x) + p^{\rho}(y) \quad \st \quad g(x) \leq 0, \ h(x) = 0, \ x\circ y = 0, \
	x \geq 0, \ y \geq 0
\end{equation}
in terms of local and global minima. We call \eqref{SPOcp}
the complementarity reformulation of problem \eqref{SPO} (hence
the acronym "cp") due to the complementarity constraints
$ x \geq 0, y \geq 0, x^T y = 0 $. Note that \eqref{SPOcp} will
be the basis of our algorithmic approach for the solution of
the sparse optimization problem \eqref{SPO}.

Assuming that the function $ p^{\rho} $ is continuously differentiable,
the two reformulations \eqref{SPOref} and \eqref{SPOcp} are smooth
optimization problems. Hence, we can write down the corresponding
KKT conditions. It turns out they are equivalent to the
S-stationarity conditions from Definition~\ref{Def:S-stationary}.
This is summarized in the following result.

\begin{thm}\label{Thm:KKT-Equivalence}
Consider the sparse optimization problem \eqref{SPO} with feasible set
$ X $ given by \eqref{Eq:XRep}. Furthermore, for a feasible point
$ x^* \in X $, let $ y^* $ denote the corresponding vector defined by
\eqref{Eq:y-star}. Then the following statements are equivalent:
\begin{itemize}
	\item[(a)] $ x^* $ is an S-stationary point of \eqref{SPO}.
	\item[(b)] The KKT conditions of \eqref{SPOref} are satisfied
	   at $ (x^*,y^*) $.
	\item[(c)] The KKT conditions of \eqref{SPOcp} are satisfied
	   at $ (x^*,y^*) $.
\end{itemize}
\end{thm}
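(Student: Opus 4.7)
The plan is to write out the KKT systems of \eqref{SPOref} and \eqref{SPOcp} componentwise, and then reduce them to the S-stationarity conditions of Definition~\ref{Def:S-stationary} via a case split according to whether $i \in I_0(x^*)$, exploiting the structure of $y^*$ from \eqref{Eq:y-star} together with properties (P.1)--(P.3). Introduce multipliers $\lambda \geq 0$ for $g(x) \leq 0$, $\mu \in \RR^p$ for $h(x) = 0$, $\eta \geq 0$ for $x \geq 0$, $\nu \in \RR^n$ for the equality $x \circ y = 0$, and, only in the case of \eqref{SPOcp}, an additional $\sigma \geq 0$ for $y \geq 0$. The stationarity of the Lagrangian with respect to $x_i$ and $y_i$ then reads
\[ \nabla_{x_i} L^{SP}(x^*,\lambda,\mu) - \eta_i + \nu_i y_i^* = 0, \qquad (p_i^\rho)'(y_i^*) + \nu_i x_i^* - \sigma_i = 0, \]
with the convention $\sigma_i := 0$ in \eqref{SPOref}; the remaining KKT relations (primal feasibility of $g$ and $h$, and the complementarity conditions on $\lambda$, $\eta$, $\sigma$) either coincide with the S-stationarity requirements or are purely about $y^*$.

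The argument then proceeds by case analysis on the index $i$. For $i \notin I_0(x^*)$, we have $x_i^* > 0$ and $y_i^* = 0$; complementarity forces $\eta_i = 0$, so the $x_i$-equation reduces precisely to $\nabla_{x_i} L^{SP}(x^*,\lambda,\mu) = 0$, matching the non-trivial part of S-stationarity, while the $y_i$-equation is solvable for $\nu_i$ because $x_i^* > 0$ (and one may set $\sigma_i = 0$ if present). For $i \in I_0(x^*)$, we have $x_i^* = 0$ and $y_i^* = s_i^\rho > 0$; in \eqref{SPOcp} the positivity of $y_i^*$ forces $\sigma_i = 0$. Since $s_i^\rho$ is the unique minimizer of the convex and, by (P.3), differentiable function $p_i^\rho$, we have $(p_i^\rho)'(s_i^\rho) = 0$, so combined with $x_i^* = 0$ the $y_i$-equation holds automatically for every $\nu_i \in \RR$. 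The $x_i$-equation then collapses to $\nabla_{x_i} L^{SP}(x^*,\lambda,\mu) = \eta_i - \nu_i s_i^\rho$, and, since $\eta_i \geq 0$ and $\nu_i \in \RR$ are free with $s_i^\rho > 0$, this imposes no condition on $\nabla_{x_i} L^{SP}(x^*,\lambda,\mu)$ --- mirroring the fact that S-stationarity leaves these components unconstrained.

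Both directions of each equivalence then follow by reading this case analysis in the appropriate direction. To pass from (a) to (b) or (c), extend a given S-stationary triple $(x^*,\lambda^*,\mu^*)$ to a full KKT multiplier by setting $\eta_i = 0$ and determining $\nu_i$ from the $y$-equation at each $i \notin I_0(x^*)$, and at each $i \in I_0(x^*)$ picking either $(\nu_i = 0,\ \eta_i = \nabla_{x_i} L^{SP}(x^*,\lambda^*,\mu^*))$ or $(\eta_i = 0,\ \nu_i = -\nabla_{x_i} L^{SP}(x^*,\lambda^*,\mu^*)/s_i^\rho)$ depending on the sign of $\nabla_{x_i} L^{SP}(x^*,\lambda^*,\mu^*)$; this preserves $\eta_i \geq 0$ thanks to $s_i^\rho > 0$. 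The converse direction is the restriction of the case analysis to the indices $i \notin I_0(x^*)$. The main technical hurdle is expected to be precisely this bookkeeping of multipliers on $I_0(x^*)$, where the sign freedom of $\nu_i$ (it corresponds to an equality constraint) together with the strict positivity $s_i^\rho > 0$ from (P.1) is what allows us to match any required value of $\nabla_{x_i} L^{SP}$ while keeping $\eta_i \geq 0$.
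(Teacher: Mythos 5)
Your proof is correct and follows exactly the route the paper intends: the paper omits the proof as ``rather elementary'' but highlights precisely the two facts you use, namely that \eqref{Eq:y-star} makes the bi-active set empty and forces $(p_i^\rho)'(y_i^*)=(p_i^\rho)'(s_i^\rho)=0$ on $I_0(x^*)$, after which the componentwise multiplier bookkeeping (sign-free $\nu_i$ for the equality $x_iy_i=0$ together with $s_i^\rho>0$) reduces all three KKT systems to the same conditions. Your case analysis fills in the omitted details correctly, so there is nothing to add.
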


\noindent 
We skip the proof of Theorem~\ref{Thm:KKT-Equivalence} since it is
rather elementary. We only stress the following two facts: Definition
\eqref{Eq:y-star} of $ y^* $ implies that the bi-active set of the 
solution pair $ (x^*, y^*) $ is empty, i.e., there is no index $ i $
with $ (x_i^*, y_i^*) = (0,0) $. Furthermore, for $ x_i^* = 0 $, 
we have that $ y_i^* $ is equal to the unique minimum $ s_i^\rho $
of the function $ p_i^{\rho} $, hence $ [ \nabla p^{\rho} (y^*)]_i = 0 $
follows.

The following observation is simple, but quite interesting
and important for our subsequent theory, hence we state it 
explicitly in the following remark.

\begin{remark}\label{Rem:Biactive-empty}
Let $ (x^*, y^*) $ be a stationary point of \eqref{SPOcp}.
We then claim that the bi-active set 
$$ 
   \mathcal{B} (x^*,y^*) := \{ i \, | \ (x_i^*,y_i^*)
   = (0,0) \} 
$$ 
is automatically empty.
In fact, from the stationarity
conditions of \eqref{SPOcp}, we, in particular, obtain 
\begin{equation*}
	\nabla p_i^{\rho} (y_i^*) + \eta x_i^* - \nu_i^x = 0 
	\quad \forall i = 1, \ldots, m
\end{equation*}
for some corresponding multipliers $ \eta \in \mathbb{R} $ and 
$ \nu^x \geq 0 $. If there were an index $ i $ with $ (x_i^*,y_i^*) =
(0,0) $, the term in the middle vanishes, and the first term 
is strictly negative by the convexity of $ p_i^{\rho} $ together
with the assumption that this function attains a unique minimum
at the positive number $ s_i^{\rho} $. Hence, it follows that 
$ \nabla p_i^{\rho} (y_i^*) + \eta x_i^* - \nu_i^x < 0 $, and
this contradiction completes the proof.
\end{remark}

\noindent
Theorem~\ref{Thm:KKT-Equivalence} 
shows that the given sparse optimization problem 
\eqref{SPO} and
its two reformulations \eqref{SPOref} and \eqref{SPOcp} are not only
equivalent with respect to local and global minima, but also in terms
of their first-order optimality conditions.

We next demonstrate that also the corresponding second-order conditions
coincide. To this end, we assume that all functions are twice continuously
differentiable. Furthermore, let $ L^{SP} $ be the SP-Lagrangian of
\eqref{SPO}. We then define the \emph{SP-critical cone}
\begin{align*}
	\mathcal{C}^{SP}(x^*,\lambda^*)= 
	\big\{ d \, \big|  \, \nabla h_j(x^*)^T d &= 0,
	\ j = 1, \ldots, p, \\ \ \nabla g_i(x^*)^T d &= 0, \ i \in I_g(x^*), \ \lambda_i^* > 0, \\ \ \nabla g_i(x^*)^T d &\le 0, \ i \in I_g(x^*), \ \lambda_i^* = 0, \\ \
    d_i &= 0, \ i \in I_0(x^*) \big\}
\end{align*}
at some S-stationary point $ x^* $ with corresponding multipliers
$ (\lambda^*, \mu^*) $. 

\begin{defn} 
Given an S-stationary point $ x^* $ with multipliers
$ (\lambda^*, \mu^*) $, and using the notion of the 
SP-Lagrangian and the SP-critical cone as before, we say that the triple
$ (x^*, \lambda^*, \mu^*) $ satisfies the
\begin{itemize}
	\item[(a)] \emph{SP-SOSC} (sparse second-order sufficiency condition) if
	\begin{equation*}
	   d^T \nabla_{xx}^2 L^{SP} (x^*, \lambda^*, \mu^*) d > 0 \quad \forall d \in 
	   \mathcal{C}^{SP}(x^*,\lambda^*) \setminus \{ 0 \}
	\end{equation*}
    \item[(b)] \emph{SP-SONC} (sparse second-order necessary condition) if
    \begin{equation*}
    	d^T \nabla_{xx}^2 L^{SP} (x^*, \lambda^*, \mu^*) d \geq 0 \quad \forall d \in 
    	\mathcal{C}^{SP}(x^*,\lambda^*)
    \end{equation*}
\end{itemize}
holds.
\end{defn}

\noindent
These second-order conditions turn out to be equivalent to
the standard second-order conditions of the two reformulations
\eqref{SPOref} and \eqref{SPOcp}. This observation is summarized
in the following result.

\begin{thm} \label{Thm:SecOrdEquiv}
Let $ x^* $ be an S-stationary point with multipliers
$ (\lambda^*, \mu^*) $, and let $y^*$ be given by \eqref{Eq:y-star}. 
Assume that $p^\rho$ is twice continuously differentiable. 
Then the following statements are equivalent.
\begin{itemize}
	\item[(a)] SP-SONC holds at $x^*$ for \eqref{SPO}.
    \item[(b)] SONC holds at $(x^*,y^*)$ for \eqref{SPOref}.
    \item[(c)] SONC holds at $(x^*,y^*)$ for \eqref{SPOcp}.
\end{itemize}
The same equivalences also hold for SP\-/SOSC and SOSC provided that the 
(diagonal) matrix $ \nabla p^{\rho} (y^*) $ is positive definite.
\end{thm}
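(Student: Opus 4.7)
The plan is to write down the Lagrangians and their Hessians for \eqref{SPOref} and \eqref{SPOcp} at $(x^*,y^*)$, identify the corresponding critical cones, and then show that the quadratic forms collapse to a single separable expression in $d_x$ and $d_y$, from which all three equivalences read off directly.

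First I would record the multipliers. By Theorem~\ref{Thm:KKT-Equivalence}, stationarity at $(x^*,y^*)$ supplies multipliers $\lambda^*$, $\mu^*$ for $g$ and $h$, a vector $\eta^*$ for $x\circ y=0$, and sign multipliers $\nu^{x,*}\geq 0$ (and, for \eqref{SPOcp}, additionally $\nu^{y,*}\geq 0$). The Hessian of the Lagrangian with respect to $(x,y)$ then has the block form
\[
  \begin{pmatrix}
    \nabla^2_{xx} L^{SP}(x^*,\lambda^*,\mu^*) & \diag(\eta^*) \\
    \diag(\eta^*) & \nabla^2 p^\rho(y^*)
  \end{pmatrix},
\]
since the bilinear terms $\eta_i^* x_i y_i$ contribute only to the mixed block and $\nabla^2 p^\rho(y^*)$ is diagonal by separability of $p^\rho$.

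Next I would describe the critical cones. The decisive input is Remark~\ref{Rem:Biactive-empty}: the biactive set at $(x^*,y^*)$ is empty, so for each index $i$ either $x_i^*>0$ and $y_i^*=0$, or $x_i^*=0$ and $y_i^*=s_i^\rho>0$. The linearized complementarity $y_i^* d_{x,i}+x_i^* d_{y,i}=0$ therefore forces $d_{x,i}=0$ for $i\in I_0(x^*)$ and $d_{y,i}=0$ for $i\notin I_0(x^*)$, while the remaining linearized conditions on $d_x$ reduce exactly to those defining $\mathcal{C}^{SP}(x^*,\lambda^*)$. For \eqref{SPOcp} I would check in addition that the multipliers $\nu^{y,*}_i$ vanish on $I_0(x^*)$, where the constraint $y_i\geq 0$ is inactive. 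Hence the critical cones of \eqref{SPOref} and \eqref{SPOcp} coincide and can be written as
\[
  \big\{(d_x,d_y) \, \big| \, d_x \in \mathcal{C}^{SP}(x^*,\lambda^*), \ d_{y,i}=0 \ \forall\, i\notin I_0(x^*)\big\}.
\]
The crucial observation now is that on this cone, for every $i$, at least one of $d_{x,i}$ and $d_{y,i}$ vanishes, so the mixed term $2\, d_x^T \diag(\eta^*) d_y$ is identically zero, and the quadratic form splits as
\[
  d^T \nabla^2 L\, d \;=\; d_x^T \nabla^2_{xx} L^{SP}(x^*,\lambda^*,\mu^*)\, d_x + d_y^T \nabla^2 p^\rho(y^*)\, d_y.
\]

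Given this split, all three equivalences fall out by standard sign arguments. Restricting to $d_y=0$ recovers SP-SONC (resp.\ SP-SOSC) from the SONC (resp.\ SOSC) statement of either reformulation. For the reverse SONC direction, convexity of each $p_i^\rho$ gives $\nabla^2 p^\rho(y^*) \succeq 0$, so the additional $d_y$-term is harmless. For SOSC, if $d_x \neq 0$ in the critical cone then SP-SOSC makes the first summand strictly positive and the second nonnegative; if $d_x=0$ but $d_y\neq 0$, strict positivity of the second summand is secured precisely by the assumed positive definiteness of $\nabla^2 p^\rho(y^*)$. I expect the main obstacle to be the careful multiplier bookkeeping---confirming that no $\nu^{y,*}_i>0$ appears on $I_0(x^*)$ and that no nonzero component of $\eta^*$ can sneak a cross contribution back into the quadratic form---but once Remark~\ref{Rem:Biactive-empty} is invoked this reduces to routine verification.
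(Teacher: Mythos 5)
Your proposal follows essentially the same route the paper sketches: the block Hessian with the mixed $\diag(\eta^*)$ term, the identification of the critical cones via the empty biactive set from Remark~\ref{Rem:Biactive-empty}, the vanishing of the cross term since $(d_x)_i(d_y)_i=0$ on the critical cone, and the resulting separable quadratic form from which the equivalences follow by the convexity (resp.\ positive definiteness) of $\nabla^2 p^\rho(y^*)$. The argument is correct and in fact supplies the multiplier bookkeeping that the paper leaves as "easy to verify."
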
 

\noindent 
Note that, due to the separable structure of the function
$ p^{\rho} $, the Jacobian $ \nabla p^{\rho} (y^*) $ is indeed
a diagonal matrix. Due to the assumed convexity of each function
$ p_i^{\rho} $, this diagonal matrix is automatically positive 
semidefinite. For the equivalence of the second-order sufficiency
conditions, however, we require the slightly stronger assumption
that this matrix is positive definite. Note that this holds 
automatically if $ p_i^{\rho} $ is strongly convex around
$ y^* $, an assumption that is satisfied for all instances from
Example~\ref{Ex:p-rho}.

We skip the details of the proof, but provide at least some hints.
The equivalence of statements (b) and (c) is based on the following
two facts: (i) the Hessian of the corresponding two Lagrangians
coincide since the Lagrangian of \eqref{SPOcp} contains
only one more linear term which disappears in the second-order 
derivatives, (ii) the criticial cones of the two sets 
$ \{ (x,y) \, | \, x \geq 0, x \circ y = 0 \} $ and 
$ \{ (x,y) \, | \, x \geq 0, y \geq 0, x \circ y = 0 \} $ coincide
at $ (x,y) = (x^*, y^*) $ since the biactive set is empty,
cf.\ Remark~\ref{Rem:Biactive-empty} (otherwise,
this statement would be wrong).

Furthermore, a simple calculation shows that the Hessian of 
the Lagrangian of either \eqref{SPOref} or \eqref{SPOcp} is given by
$$
   H = \begin{pmatrix}
	\nabla_{xx}^2 L^{SP} (x^*, \lambda^*, \mu^*) & \text{diag}(\gamma) \\
	\text{diag}(\gamma) & \nabla_{yy}^2 p^\rho(y^*)
\end{pmatrix}
$$
for some multiplier $ \gamma $. This implies
$$ 
   \begin{pmatrix}
	d_x \\ d_y 
   \end{pmatrix}^T H \begin{pmatrix}
	d_x \\ d_y
   \end{pmatrix} = 
   d_x^T \nabla_{xx}^2L^{SP}(x^*,\lambda^*, \mu^*) d_x + 2
   d_y^T \nabla_{yy}^2 p^\rho(y^*) d_y + \sum_{i=1}^n \gamma_i \left(d_x\right)_i \left(d_y\right)_i.
$$
Observe that $ d_y^T \nabla_{yy}^2 p^\rho(y) d_y \geq 0 $ holds
due to the convexity of the (separable) function $ p^\rho $, and
that $ \left(d_x\right)_i \left(d_y\right)_i = 0 $ for all $ i $
and all vectors $ (d_x,d_y) $ from the critical cone of either 
\eqref{SPOref} or \eqref{SPOcp}. Taking these observations into
account, the equivalence of statement (a) to assertions (b) or (c)
is easy to verify.

Note that, in general, we prefer to deal with
the above two sparse second-order conditions since they are defined
directly in terms of the sparse optimization problem \eqref{SPO}, 
hence they are independent of the auxiliary variable $ y $ and the function
$ p^\rho $ introduced in order to obtain the desired reformulations.

\section{Approximate S-Stationarity}\label{Sec:AS-Stationarity}

This section considers a sequential optimality condition which is the
counterpart of the \emph{approximate KKT conditions} (AKKT conditions
for short) originally introduced for standard nonlinear programs of 
the form 
\begin{equation}\label{Eq:NLP}
	\min_x \ f(x) \quad \st \quad g(x) \leq 0, \ h(x)=0
\end{equation}
by \cite{Andreani2016} (with the name \emph{cone continuity property}), 
see also \cite{BirginMartinez2014}: A feasible point $ x^* $
of \eqref{Eq:NLP} is called an \emph{AKKT point} if there are sequences 
$ \{ x^k \} \subset \RR^n $, $ \lambda^k \subset \RR^m_+ $, and $ \mu^k \subset \RR^p $ such that
\begin{equation*}
	x^k \to x^*, \quad \nabla_x L(x^k, \lambda^k, \mu^k) \to 0, \quad
	\min \big\{ \hspace*{-1.1mm} - g_i (x^k), \lambda_i^k \big\} \to 0 \ \ \forall i = 1, 
	\ldots, m,
\end{equation*}
where $ L $ denotes the (ordinary) Lagrangian of \eqref{Eq:NLP}.

The notion of an AKKT point has been generalized in different ways
to optimization problems having a special and/or difficult structure,
often coined \emph{approximate M-stationarity} (AM-stationarity)
since it is based on a sequential version of the M-stationary
optimality conditions, see, e.g., \cite{Mehlitz2020} for a 
corresponding discussion in a very general setting.

In the following, we introduce a sequential optimality condition for
the sparse optimization problem \eqref{SPO} which is based on the 
notion of an S-stationary point from Definition~\ref{Def:S-stationary}
and, therefore, takes into account the particular structure of this
problem.

\begin{defn}\label{Def:AS-stationarity}
Consider the sparse optimization problem \eqref{SPO} with feasible
set $X$ being defined by \eqref{Eq:XRep}. We then call a feasible
point $x^* \in X $ an \emph{approximate S-stationary point} 
(AS-stationary point, for
short) if there exist sequences $\{x^k\} \subset \RR^n $, $ \{ \lambda^k \} \subset \RR^m_+ $, and $ \{\mu^k\} \subset \RR^p$ such that
\begin{equation*}
   x^k \to x^*, \quad
   \nabla_{x_i} L^{SP}(x^k, \lambda^k, \mu^k) \to 0 \ \ \forall i \notin I_0(x^*), \quad
   \min\{ \hspace{-0.1mm} -g_i(x^k), \lambda_i^k\} \to 0 \ \ \forall i = 1,...,m,
\end{equation*}
where $ L^{SP} $ denotes, once again, the SP-Lagrangian from 
\eqref{Eq:SP-Lagrangian}.
\end{defn}

\noindent 
Similar to existing results on AKKT- and AM-stationary points, we can
also derive several useful properties for our notion of an AS-stationary
point in the context of sparse optimization problems. 
The first result in this context shows that any local minimum 
is automatically an AS-stationary point. Note that this statement
holds without assuming any constraint qualification.

\begin{thm}\label{Thm:MinIsAS-Stationary}
Let $x^*$ be a local minimum of the sparse optimization problem 
\eqref{SPO} with $ X $ given by \eqref{Eq:XRep}. Then $x^*$ is an 
AS-stationary point of \eqref{SPO}.
\end{thm}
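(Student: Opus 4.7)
The plan is to reduce the statement to the classical AKKT result of \cite{Andreani2016} applied to the tightened nonlinear program \eqref{TNLP}, constructing the approximating sequences via an external penalty together with a Tikhonov regularization.

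First, I would exploit the lower semicontinuity of $\|\cdot\|_0$ to choose $\delta>0$ with $\|x\|_0 \geq \|x^*\|_0$ for every $x \in \bar{B}_\delta(x^*)$, while $x^*$ also minimizes $f + \rho\|\cdot\|_0$ over $X \cap \bar{B}_\delta(x^*)$. Any point $x$ in the feasible set of \eqref{TNLP} lying in $\bar{B}_\delta(x^*)$ then satisfies $\|x\|_0 = \|x^*\|_0$ (the tightened equalities force the upper bound, lower semicontinuity the lower one), and by shrinking $\delta$ if necessary also $x_j > 0$ for $j \notin I_0(x^*)$, so $x \in X$. Local minimality for \eqref{SPO} consequently upgrades to local minimality of $x^*$ for the smooth program \eqref{TNLP}.

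Next, for each $k \in \NN$, I would take $x^k$ to be a global minimizer on $\bar{B}_\delta(x^*)$ (which exists by compactness) of
\begin{equation*}
   f(x) + \tfrac{k}{2}\|g^+(x)\|^2 + \tfrac{k}{2}\|h(x)\|^2 + \tfrac{k}{2}\sum_{i \in I_0(x^*)} x_i^2 + \tfrac12\|x-x^*\|^2,
\end{equation*}
where $g^+ := \max\{g,0\}$ componentwise. Comparing with the test point $x^*$ (at which the penalty vanishes) and using boundedness of $f$ on $\bar{B}_\delta(x^*)$, one obtains $g^+(x^k) \to 0$, $h(x^k) \to 0$, and $x_i^k \to 0$ for $i \in I_0(x^*)$; the Tikhonov term together with the local minimality of $x^*$ for \eqref{TNLP} then forces $x^k \to x^*$. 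For $k$ sufficiently large $x^k$ lies in the interior of the ball, so the unconstrained first-order condition yields
\begin{equation*}
   \nabla_x L^{SP}(x^k,\lambda^k,\mu^k) + \sum_{i \in I_0(x^*)} \gamma_i^k e_i + (x^k - x^*) = 0
\end{equation*}
with $\lambda_i^k := k\,g_i^+(x^k) \geq 0$, $\mu_j^k := k\,h_j(x^k)$, and $\gamma_i^k := k\,x_i^k$.

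Reading off the components at indices $i \notin I_0(x^*)$ gives $\nabla_{x_i}L^{SP}(x^k,\lambda^k,\mu^k) = -(x_i^k - x_i^*) \to 0$, which is the gradient part of AS-stationarity. The complementarity $\min\{-g_i(x^k),\lambda_i^k\} \to 0$ follows immediately from the definition of $\lambda_i^k$: if $g_i(x^k) \leq 0$ then $\lambda_i^k = 0$, and otherwise $\lambda_i^k > 0$ with $-g_i(x^k) = -g_i^+(x^k) \to 0$. The main obstacle I expect is the first step, namely the careful handling of the $\ell_0$-term via lower semicontinuity to transfer local minimality from the discontinuous problem \eqref{SPO} to the smooth program \eqref{TNLP}; the ensuing penalty argument is standard and, crucially, requires no constraint qualification, in line with the statement of the theorem.
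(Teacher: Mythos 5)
Your proposal is correct and follows essentially the same route as the paper: transfer local minimality from \eqref{SPO} to the tightened program \eqref{TNLP}, then extract AS-stationarity from the AKKT property of \eqref{TNLP} by discarding the multipliers $\gamma_i^k$ attached to the indices $i \in I_0(x^*)$. The only difference is that you prove the AKKT property from scratch via the external-penalty/Tikhonov construction (and spell out the lower-semicontinuity argument behind the reduction to \eqref{TNLP}, which the paper only asserts in Section~\ref{Sec:CQs}), whereas the paper simply cites the standard AKKT result.
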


\begin{proof}
First recall that the local minimum $x^*$ of \eqref{SPO} is also a local
minimum of the corresponding tightened nonlinear program
from \eqref{TNLP}. Therefore, standard results on AKKT points imply
that there exists sequences $x^k \to x^*$ as well as $\{\lambda^k\} \subset \RR^m_+$, $\{\mu^k\} \subset \RR^p$, and $\{\gamma^k\} \subset \RR^{|I_0(x^*)|}$ satisfiying
$$ 
   \nabla_x L^{SP}(x^k,\lambda^k, \mu^k ) + \sum_{i \in I_0(x^*)} \gamma_i^k e_i \to 0 \quad \text{and} \quad \min \{-g_i(x^k), \lambda_i^k\} \to 0
   \ \ \forall i = 1, \ldots, m.
$$
Hence, for all $i \notin I_0(x^*)$, this implies 
$ \nabla_{x_i} L^{SP}(x^k,\lambda^k,\mu^k) \to 0$, and the claim follows. 
\end{proof}

\noindent
Theorem~\ref{Thm:MinIsAS-Stationary} shows that AS-stationarity is
a necessary optimality condition. Of course, this does not automatically
imply that AS-stationarity is a suitable (strong) optimality condition.
In fact, it is known that, for certain classes of optimization problems
like cardinality-constrained problems, the standard AKKT-conditions
hold at every feasible point, cf.\ \cite{Ribeiro2022}. The
following example shows that this unfortunate situation does not hold
in our setting with AS-stationarity.

\begin{example}\label{Eq:AS-Stationarity}
Consider the (sparse) optimization problem
\begin{equation*} 
   \min_x \ \sum_{i = 1}^n x_i + \rho \norm{x}_0 \quad
   \st \quad \norm{x}_2^2 \le 1, \ x\ge 0.
\end{equation*}
The origin $ x = 0 $ is the only local and global minimum of this problem
and, hence, also an AS-stationary point in view of Theorem~\ref{Thm:MinIsAS-Stationary}. We claim that it is also the 
only AS-stationary point. In fact, suppose there exists an 
AS-stationary point $ x^* $ with $ x_i^* > 0 $ for at least one
component $ i $. Since $ x^* $ is an AS-stationary point, there
exist suitable sequences $ \{ x^k \} \to x^* $ and $ \{ \lambda^k \} \in 
[0, \infty ) $ such that, in particular, for the component $ i $, we have 
$ 1 + 2 \lambda^k x_i^k \to 0 $, which is impossible for all $ k $
sufficiently large since $\lambda^k \ge 0$ and $x_i^k \to x_i^*>0$. Hence, $x = 0 $ is the only AS-stationary point. Notice that the origin is also S-stationary in this example.
\end{example}

\noindent
We next want to provide conditions under which an AS-stationary point
is already S-stationary. The following example shows that this implication
does not hold in general.

\begin{example}\label{Ex:Counter-S-AS}
Consider the (sparse) optimization problem
\begin{equation*} 
	\min_x \ x_1 + \rho \norm{x}_0, \quad \st \quad 
	\frac{1}{2} \norm{x - e}_2^2 = 0, \ x \ge 0.
\end{equation*}
The only feasible point and therefore local and global minimum is $x^* = e$.
Theorem~\ref{Thm:MinIsAS-Stationary} implies that $ x^* $ is AS-stationary.
This can also be verified directly using Definition~\ref{Def:AS-stationarity}
and the sequences $x^k = (1 - \frac{1}{k}, 1,...,1)$, 
$\mu^k = k$, for which we obtain the desired limit
\begin{equation*}
   \begin{pmatrix}
   1 \\ 0 \\ \vdots \\ 0 
   \end{pmatrix} + k \cdot \begin{pmatrix}
	1 - \frac{1}{k} - 1 \\ 0 \\ \vdots \\ 0
\end{pmatrix} = 0
\end{equation*}
(note that the sequence $ \{ \mu^k \} $ is unbounded).
We claim, however, that $ x^* $ is not an S-stationary point.  
This is clear since otherwise we would have 
\begin{equation*}
   0 = \nabla_{x_1} L^{SP} (x^*, \mu^*) = 1 + \mu (x_1^* - 1) = 1
\end{equation*}
for some multiplier $ \mu \in \RR $, which is impossible.
\end{example}

\noindent
For an AS-stationary point to be S-stationary, we therefore require
a suitable constraint qualification. The following is the natural
counterpart of what is usually called AKKT-regularity or AM-regularity
in the context of standard nonlinear programs or certain structured
optimization problems, see \cite{AndreaniMartinezRamosSilva2018,Andreani2016,
BirginMartinez2014}
as well as \cite{Mehlitz2020} and references therein.

\begin{defn}
Consider the sparse optimization problem \eqref{SPO} with feasible
set $X$ being defined by \eqref{Eq:XRep}. Furthermore, let 
$ x^* \in X $ be any given feasible point. 
We say that $x^*$ satisfies the \emph{AS-regularity condition} if the cone \begin{equation*}
    K(x):= \Big\{d \, \Big| \,  d_j = \Big[ \sum_{i = 1}^p \mu_i \nabla h_i(x) + \sum_{i \in I_g(x^*)} \lambda_i \nabla g_i(x) \Big]_j , \ \mu_i \in \RR, \ \lambda_i\ge 0, \ j \notin I_0(x^*)\Big\}
\end{equation*} 
is outer semicontinuous at $x^*$, i.e.,
$
	\textrm{Limsup}_{x \to x^*} K(x) \subseteq K(x^*),
$
where 
\begin{equation*}
	\text{Limsup}_{x \to x^*} K(x) := \big\{ d \, \big| \,
	\exists \{ x^k \} \to x^*, \exists d^k \to d: d^k \in K(x^k)
	\ \forall k \in \NN \big\}
\end{equation*}
denotes the upper or outer limit of the set-valued mapping $ K( \cdot ) $.
\end{defn}

\noindent 
Note that, in the previous definition of $ K(x) $, the index set
$ I_0 (x^*) $ is fixed at $ x^* $. Moreover, we note that no condition
is required for the components $ d_j $ with $ j \in I_0(x^*) $.

The following result shows that, in a certain sense, AS-regularity is a
necessary and sufficient condition for an AS-stationary point to
be S-stationary. In particular, this means that AS-regularity is
a constraint qualification.

\begin{thm}\label{Thm:Char-S-Stat}
Let $x^*$ be feasible for \eqref{SPO} with feasible set $X$ defined 
in \eqref{Eq:XRep}. Then the following statements hold:
\begin{enumerate}
    \item If $x^*$ is an AS-stationary point 
    of \eqref{SPO} satisfying AS-regularity, then $x^*$ is an S-stationary point of \eqref{SPO}.
    \item Conversely, if for every continuously differentiable 
    objective function $f$, the implication
    \begin{center}
        $x^*$ is an AS-stationary point of \eqref{SPO} $\Longrightarrow$ $x^*$ is an S-stationary point of \eqref{SPO}
    \end{center}
    holds, then $x^*$ is AS-regular.
\end{enumerate}
\end{thm}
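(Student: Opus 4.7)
The strategy is the sparse analogue of the standard proof that AKKT-regularity (or AM-regularity) characterises "approximate implies exact" in classical or structured nonlinear programming, see \cite{AndreaniMartinezRamosSilva2018, Mehlitz2020}. The cone $K(x)$ plays the role of the projected image of the multiplier map onto the coordinates $j \notin I_0(x^*)$, and the two parts of the theorem are its sufficiency and necessity.

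For item (1), I would extract sequences $\{x^k\}$, $\{\lambda^k\} \subset \RR^m_+$, $\{\mu^k\} \subset \RR^p$ from Definition~\ref{Def:AS-stationarity}. Since $g_i(x^*) < 0$ for $i \notin I_g(x^*)$, continuity together with the complementarity limit $\min\{-g_i(x^k), \lambda_i^k\}\to 0$ forces $\lambda_i^k \to 0$ on the inactive indices; after resetting these $\lambda_i^k$ to zero, the gradient limit of Definition~\ref{Def:AS-stationarity} still holds since the dropped terms vanish in the limit. Then the vectors
\[
   d^k_j := \Big[\sum_{i=1}^p \mu_i^k \nabla h_i(x^k) + \sum_{i \in I_g(x^*)} \lambda_i^k \nabla g_i(x^k)\Big]_j \quad (j \notin I_0(x^*)),
\]
completed by zero on $I_0(x^*)$, lie in $K(x^k)$. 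By AS-stationarity they converge to a vector $d$ with $d_j = -\nabla_{x_j} f(x^*)$ on $j \notin I_0(x^*)$, and AS-regularity places $d \in K(x^*)$. Reading off the multipliers $(\mu^*, \lambda^*)$ certifying this (and padding $\lambda^*$ by zero on inactive indices) yields exactly the S-stationarity conditions of Definition~\ref{Def:S-stationary}.

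For item (2), I argue by contraposition. If AS-regularity fails, pick $x^k \to x^*$ and $d^k \in K(x^k)$ with $d^k \to d \notin K(x^*)$. Since the components of $d$ (and $d^k$) on $I_0(x^*)$ are unconstrained in the definition of $K$, I may replace them by zero without changing any inclusion or limit, so I may assume $d_j = 0$ for $j \in I_0(x^*)$. I then choose the linear objective $f(x) := -d^T x$, so that $\nabla f \equiv -d$. S-stationarity of $x^*$ for this $f$ would, after using $\lambda^*_i = 0$ on inactive indices, rearrange to the statement $d \in K(x^*)$, a contradiction. On the other hand, the multipliers $(\mu^k, \lambda^k)$ certifying $d^k \in K(x^k)$, extended by zero on inactive $i$, yield $\nabla_{x_j} L^{SP}(x^k, \lambda^k, \mu^k) = d_j^k - d_j \to 0$ for $j \notin I_0(x^*)$, while the complementarity limit is automatic (it vanishes for $i \in I_g(x^*)$ by continuity and equals zero for $i \notin I_g(x^*)$ once $k$ is large). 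Hence $x^*$ is AS-stationary but not S-stationary for this $f$.

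The real content is the clean identification, in both directions, between the cone $K(x)$ and the coordinates of the AS-stationarity residual on $j \notin I_0(x^*)$. The main technical nuisance is the bookkeeping of inactive inequality multipliers, handled uniformly by the $\min$-complementarity limit; the choice of a purely linear $f$ in the converse is what isolates a single direction $d$ and makes the two implications essentially read off from one another.
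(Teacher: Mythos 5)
Your proposal is correct and follows essentially the same route as the paper: in part (1) you embed the AS-stationarity residual into the cone $K(x^k)$ (padding the unconstrained $I_0(x^*)$-components) and invoke outer semicontinuity, and in part (2) you use a linear objective whose gradient is the limiting cone element — the paper does exactly this, just phrased directly rather than by contraposition, and cites \cite{BirginMartinez2014} for the reduction to $\lambda_i^k=0$ on inactive indices where you give the short direct justification. No gaps.
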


\begin{proof}
(a): Since $ x^* $ is an AS-stationary point, there exist sequences
$ \{ x^k \} \subset \RR^n, \{ \lambda^k \} \subset \RR^m $, and 
$ \{ \mu^k \} \subset \RR^p $ such that $ x^k \to x^* $,
$ \nabla_{x_i} L^{SP} (x^k, \lambda^k, \mu^k) \to 0 $ for all
$ i \not\in I_0(x^*) $, and $ \min \{ - g_i(x^k), \lambda_i^k \} \to 0 $
for all $ i = 1, \ldots, m $. The latter condition implies that
we may assume without loss of generality that $ \lambda_i^k = 0 $
for all $ i \not\in I_g(x^*) $ and all $ k \in \mathbb{N} $, cf.\
\cite[Thm.\ 3.2]{BirginMartinez2014} for a formal proof. Then,
writing 
\begin{equation*}
	w^k := \sum_{i \in I_g(x^*)} \lambda_i^k g_i(x^k) + \sum_{j=1}^p 
	\mu_j^k \nabla h_j(x^k)
\end{equation*}
we see that $ w^k \in K(x^k) $ for all $ k \in \mathbb{N} $, and that
$ \xi_i^k := [ \nabla f(x^k) + w^k]_i \to 0 $ for all $ i \not\in 
I_0(x^*) $. Define $ \xi_i^k := 0 $ for the remaining components
$ i \in I_0(x^k) $ and set $ v^k := \xi^k - \nabla f(x^k) $. We then 
obtain $ v^k \in K(x^k) $ for all $ k \in \mathbb{N} $ since, for
the relevant components $ i \not\in I_0(x^*) $, we have 
$ v_i^k = \xi_i^k - [\nabla f(x^k)]_i = w_i^k $. The assumed 
AS-regularity of $ x^* $ then implies 
$$   
   -\nabla f(x^*) = \lim_{k \to \infty} v^k \in \textrm{Limsup}_{x \to x^*} K(x) \subseteq K(x^*),
$$   
which shows that $x^*$ is S-stationary.\medskip

\noindent
(b): Take $w^* \in \textrm{Limsup}_{x \to x^*} K(x)$ arbitrarily. 
Then there is $\{(x^k,w^k)\} \to (x^*,w^*)$ with $w^k \in K(x^k)$ for all $k\in \NN$. Hence,
there exist sequences $ \{ \lambda^k \} \subset 
\RR^{|I_g(x^*)|}_+ $ and $ \{ \mu^k \} \subset \RR^p $ such that
\begin{equation*} 
	w^k_i =\sum_{j \in I_g(x^*)} \lambda_j^k \nabla g_j(x^k)_i
	+ \sum_{j = 1}^p \mu_j^k \nabla h_j(x^k)_i \quad \forall i \not\in 
	I_0(x^*).
\end{equation*}
Define the function $f(x) := \sum_{i = 1}^n -x_i w_i^*$ and choose $\lambda_i^k := 0$, for $i \notin I_g(x^*)$.
Then clearly
$ \nabla f(x^k)_i + w_i^k \to 0,$
and $x^*$ is an AS-stationary point. By assumption, $x^*$ is already S-stationary, which is equivalent to $w^* = -\nabla f(x^*) \in K(x^*)$.
\end{proof}

\noindent 
Having identified AS-regularity as a constraint qualifcation, the
question is how this property is related to other SP-CQs.
Among those given in Definition~\ref{Def:SP-CQs}, the weakest one
is SP-RCPLD. The following result shows that this condition still
implies AS-regularity.

\begin{thm}\label{Thm:SPRCPLD-ASREGULARITY}
Let $x^*$ be feasible for \eqref{SPO} with feasible set $X$
defined by \eqref{Eq:XRep}. Assume 
SP-RCPLD is satisfied at $x^*$. Then AS-regularity holds at $x^*$.
\end{thm}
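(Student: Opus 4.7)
The plan is to recognize AS-regularity at $x^*$ as nothing other than the classical \emph{cone continuity property} (CCP) for the tightened nonlinear program \eqref{TNLP}, and then to invoke the known implication that RCPLD implies CCP for standard NLPs.

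First I would rewrite the cone $K(x)$ in a more familiar form. Since the components $d_j$ with $j \in I_0(x^*)$ are entirely unconstrained in the definition of $K(x)$, one checks immediately that
$$K(x) = \Big\{ \sum_{i=1}^p \mu_i \nabla h_i(x) + \sum_{i \in I_g(x^*)} \lambda_i \nabla g_i(x) + \sum_{i \in I_0(x^*)} \gamma_i e_i \,\Big|\, \mu \in \RR^p,\ \lambda \geq 0,\ \gamma \in \RR^{|I_0(x^*)|}\Big\},$$
because for any given $d \in K(x)$ with associated multipliers $(\mu,\lambda)$ one can match the coordinates $i \in I_0(x^*)$ by choosing $\gamma_i$ suitably, and conversely the extra $e_i$ terms only move the unconstrained coordinates. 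The right-hand side is precisely the CCP cone of \eqref{TNLP}, whose equality constraints are the $h_i$ together with $x_i = 0$ for $i \in I_0(x^*)$ and whose active inequalities are $\{g_i\}_{i \in I_g(x^*)}$.

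Next I would observe that SP-RCPLD, as given in Definition~\ref{Def:SP-CQs}(c), is by construction the classical RCPLD applied to \eqref{TNLP}: condition (1) is the constant-rank requirement on the full equality-gradient system of \eqref{TNLP}, and condition (2) is the positive-linear-dependence preservation property on bases augmented by subsets of active inequality gradients. Hence, invoking the classical theorem that RCPLD implies CCP (established in \cite{AHSS-12} and sharpened in \cite{Andreani2016}) applied to \eqref{TNLP} yields
$$\mathrm{Limsup}_{x \to x^*} K(x) \subseteq K(x^*),$$
which is exactly AS-regularity at $x^*$.

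The only real work is the identification of $K(x)$ with the CCP cone of \eqref{TNLP}, and the main obstacle is merely bookkeeping: making sure the convention for CCP used in the literature (taking only active inequality gradients and all equality gradients) aligns with our definition, which it does because the inactive gradients $\nabla g_i(x^*)$ for $i \notin I_g(x^*)$ can be dropped thanks to the complementarity bracketed in the definition of AS-stationarity. No new analytic content is needed beyond citing the existing RCPLD-to-CCP implication for standard NLPs.
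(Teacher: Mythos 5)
Your proposal is correct and follows essentially the same route as the paper: identify AS-regularity with the cone continuity property (AKKT-regularity) of the tightened program \eqref{TNLP}, note that SP-RCPLD is by construction standard RCPLD for \eqref{TNLP}, and invoke the known RCPLD $\Rightarrow$ CCP implication from \cite{Andreani2016}. The only difference is that you spell out the identification of $K(x)$ with the CCP cone of \eqref{TNLP} (absorbing the free coordinates $j \in I_0(x^*)$ via the $\gamma_i e_i$ terms), which the paper leaves implicit; that bookkeeping is done correctly.
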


\begin{proof}
    By construction, SP-RCPLD implies standard RCPLD of for the 
    tightened program (\ref{TNLP}) at $x^*$. It is known from \cite{Andreani2016} that RCPLD yields AKKT-regularity for 
    this program. This, however, is exactly our AS-regularity
    condition.
\end{proof}

\section{An Exact Penalty Algorithm}\label{Sec:Algorithm}

Throughout this section, we assume that all functions $ f, g $, and
$ h $ involved in the sparse optimization problem are at least
continuously differentiable. Then \eqref{SPOcp} represents a smooth 
reformulation of the nonsmooth problem \eqref{SPO}. 
Consequently, the reformulation \eqref{SPOcp} allows the application
of a variety of different methods known from smooth optimization
in order to solve the given sparse problem \eqref{SPO}.

On the other hand, a suitable choice for solving \eqref{SPOcp}
requires some care. For example, taking into account the almost
separable structure of \eqref{SPOcp} in terms of the two variables
$ x $ and $ y $, it is very tempting to apply an alternating
minimization approach to this problem which uses a separate
minimization with respect to the variables $ x $ and $ y $.
This approach has the major advantage that the resulting 
subproblems are (usually) very easy to solve. However, this 
technique then terminates after the first cylce with an
S-stationary point and, afterwards, does not make any further
progress. This is unfortunate since the corresponding objective
function value is typically very poor. In fact, this method
often gets stuck at a local minimum with a relatively large function
value, so that the method terminates with a point that is far
away from being globally optimal.

In view of our experience, and in order to obtain good candidates
for a global minimum, it is advantageous to apply a technique
which might be feasible or approximately feasible with respect
to the standard constraints $ g(x) \leq 0 $ and $ h(x) = 0 $,
but with the complementarity term not approaching zero too fast, 
because this leaves some freedom in reducing the remaining
objective function.

The aim of this section is therefore to present an exact 
penalty approach for the solution of the sparse optimization problem with feasible set $ X $ given by \eqref{Eq:XRep}.
Our exact penalty method is based on the reformulation
\eqref{SPOcp} and penalizes the (difficult) complementarity term
only, whereas the remaining restrictions stay as constraints
in the penalized problem. Using the standard $ \ell_1 $-penalty
function, the penalized objective function then reads
$$ 
   f(x) + p^{\rho}(y) + \alpha \left | x^Ty \right | = f(x) + p^{\rho}(y) + \alpha \norm{x \circ y}_1 .
$$
The $ \ell_1 $-term usually leads to a nonsmoothness of this
penalty approach, which is the major drawback of this technique.
In our particular situation, however, we have sign contraints 
on $x$ and $y$, cf.\ the reformulated problem \eqref{SPOcp}
once again. Hence,  we may remove the absolute value and obtain the
following (smooth!) penalized version of \eqref{SPOref}:
\begin{equation}
    \tag{Pen($\alpha$)} \label{Psq}
        \min_{x,y} \ f(x) + p^{\rho}(y) + \alpha x^T y \quad
        \text{s.t.} \quad g(x) \leq 0, \ h(x) = 0, \ x \geq 0, \ y \ge 0.
\end{equation}
This motivates the following exact penalty-type algorithm.

\begin{algo}~\label{PenaltyAlg} (Exact Penalty Method for Sparse
	Optimization)
	\begin{enumerate}
    \item Choose a non-negative sequence $\varepsilon_k \searrow 0$ and parameters $\alpha_0 > 0$, $\beta > 1$, and $\delta \ge 0$.
    \item For $k = 0, 1, 2, ...$, compute $(x^{k+1},y^{k+1}, \lambda^{k+1}, \mu^{k+1}, \nu^{k+1}_x, \nu^{k+1}_y) \in \RR^n_+ \times \RR^n_+ \times \RR^p_+ \times \RR^m \times \RR^n_+ \times \RR^n_+$ such that
    \begin{align*}
        \lVert \nabla_x L^{SP}(x^{k+1}, \mu^{k+1}, \lambda^{k+1}) - \sum_{i = 1}^n (\nu_x^{k+1})_i e_i + \alpha_k y^{k+1} \rVert \le & \ \varepsilon_k \\
        \lVert \nabla p^\rho(y^{k+1}) - \sum_{i=1}^n (\nu_y^{k+1})_i e_i + \alpha_k x^{k+1}\rVert \le & \ \varepsilon_k\\
        \min |\{-g_i(x^{k+1}), \lambda_i^{k+1}\}| \le & \ \varepsilon_k, \quad i = 1,...,p\\
       \lVert h(x^{k+1}) \rVert \le & \ \varepsilon_k,\\
       \min\{x_i^{k+1},(\nu_x^{k+1})_i\} \le & \ \varepsilon_k, \quad i = 1,...,n \\ \min\{y_i^{k+1}, (\nu_y^{k+1})_i\} \le & \ \varepsilon_k, \quad i = 1,...,n
    \end{align*}
    \item If $$ \varepsilon_k \le \delta \quad \text{and} \quad \sum_{i = 1}^n x_i^{k+1}y_i^{k+1} \le \delta $$ then STOP.  Otherwise set 
    $ \alpha_{k+1} = \alpha_k \cdot \beta$
    and go to 2.    
\end{enumerate}
\end{algo}

\noindent
The main computational burden, of course, is in step 2. Note that
we do not require to solve the penalized subproblems exactly.
The tests within step 2 only check whether we have an approximate 
KKT point of the penalized problem \eqref{Psq}, with 
$ \varepsilon_k $ denoting the measure of inexactness, and with 
$ \lambda^{k+1}, \mu^{k+1}, \nu_x^{k+1} $, and $ \nu_y^{k+1} $ being
the Lagrange multipliers associated to the constraints
$ g(x) \leq 0, h(x) = 0, x \geq 0 $, and $ y \geq 0 $, respectively.
This general framework allows plenty of methods in order to 
deal with the penalized subproblems, which is an important feature
of the overall method since
a suitable choice depends on the particular problem under
consideration. In addition, we note that some
methods might deal with the nonnegativity constraints $ x \geq 0,
y \geq 0 $ explicitly, so that these methods do not generate
corresponding multiplier estimates $ \nu_x^{k+1}, \nu_y^{k+1} $.
In this situation, we can simply delete the two final tests in
step 2, and replace the first two by the related (multiplier-free)
tests
\begin{align*}
        \lVert P_{[0,\infty)}\left(x^{k+1} - (\nabla_x L^{SP}(x^{k+1}, \mu^{k+1}, \lambda^{k+1}) + \alpha_k y^{k+1})\right) - x^{k+1} \rVert \le & \ \varepsilon_k, \\
        \lVert P_{[0, \infty)}\left(y^{k+1} - (\nabla p^\rho(y^{k+1}) + \alpha_k x^{k+1})\right) - y^{k+1} \rVert \le & \ \varepsilon_k.
\end{align*}
The subsequent theory remains true with this kind of test, too.
Our analysis, however, concentrates on the inexactness measure
from step 2.

The final step 3 represents the stopping criterion for the outer
iteration. Based on the termination parameter $ \delta \geq 0 $,
we simply check whether we are (approximately) feasible both 
with respect to the standard constraints and with respect to 
the penalized complementarity constraints. For the (theoretical)
choice $ \delta = 0 $, it follows immediately that we terminate
with a feasible point. The following result shows that, in this
case, we even have an S-stationary point.

\begin{lem}\label{thm:AccPointSstat}
Let $\delta = 0$ and assume that 
Algorithm~\ref{PenaltyAlg} terminates after a finite number of steps in a point $(x^*,y^*).$ Then $(x^*,y^*)$ is a KKT point of $\eqref{SPOref}$, and $x^*$ is S-stationary.
\end{lem}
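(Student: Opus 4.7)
The plan is to unpack the stopping criterion with $\delta = 0$, observe that the inexactness measures in step 2 collapse to exact KKT conditions for the penalized problem \eqref{Psq}, and then check that the same primal-dual tuple (with one additional multiplier for the complementarity constraint) is a KKT point of \eqref{SPOref}. Finally, an argument analogous to Remark~\ref{Rem:Biactive-empty} forces $y^*$ to have the form \eqref{Eq:y-star}, after which Theorem~\ref{Thm:KKT-Equivalence} delivers S-stationarity of $x^*$.

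More concretely, if termination happens at iterate $(x^*,y^*)$ with some $\alpha := \alpha_k$, then $\varepsilon_k \le 0$ and $\sum_i x_i^* y_i^* \le 0$. Together with $x^*,y^* \ge 0$, this yields $x^* \circ y^* = 0$, so $(x^*,y^*)$ is feasible for \eqref{SPOref}, and the last four lines of step 2 provide exact primal feasibility of $g,h$ together with the complementary slackness conditions $\lambda^* \circ g(x^*) = 0$, $\nu_x^* \circ x^* = 0$, and $\nu_y^* \circ y^* = 0$, all with nonnegative multipliers $\lambda^*, \nu_x^*, \nu_y^*$. The first two lines in step 2, evaluated at $\varepsilon_k = 0$, give the exact stationarity identities
\[
    \nabla_x L^{SP}(x^*,\lambda^*,\mu^*) - \nu_x^* + \alpha\, y^* = 0, \qquad \nabla p^\rho(y^*) - \nu_y^* + \alpha\, x^* = 0.
\]
Now introduce the multiplier $\gamma^* := \alpha\, e \in \RR^n$ for the constraint $x\circ y = 0$. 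The Lagrangian of \eqref{SPOref} is $f(x)+p^\rho(y)+\lambda^T g(x) + \mu^T h(x) - \nu_x^T x - \nu_y^T y + \gamma^T (x\circ y)$, and its partial gradients in $x$ and $y$ evaluated at $(x^*,y^*)$ with the above multipliers are precisely the two displayed identities, since $\gamma^* \circ y^* = \alpha\, y^*$ and $\gamma^* \circ x^* = \alpha\, x^*$. Hence $(x^*,y^*,\lambda^*,\mu^*,\nu_x^*,\nu_y^*,\gamma^*)$ satisfies the KKT conditions of \eqref{SPOref}.

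For the S-stationarity claim, the plan is to verify that the $y^*$ produced by the algorithm coincides with the vector defined in \eqref{Eq:y-star}, and then appeal to Theorem~\ref{Thm:KKT-Equivalence}. For $i \notin I_0(x^*)$, the feasibility relation $x_i^* y_i^* = 0$ together with $x_i^* > 0$ forces $y_i^* = 0$. For $i \in I_0(x^*)$, the $y$-stationarity identity collapses to $(p_i^\rho)'(y_i^*) = (\nu_y^*)_i \ge 0$; since $p_i^\rho$ is convex with unique minimum at $s_i^\rho > 0$, the derivative is strictly negative on $[0, s_i^\rho)$, so if $y_i^* = 0$ then $(p_i^\rho)'(0) < 0$ would contradict $(\nu_y^*)_i \ge 0$. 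Therefore $y_i^* > 0$, which via $y_i^* (\nu_y^*)_i = 0$ forces $(\nu_y^*)_i = 0$ and hence $(p_i^\rho)'(y_i^*) = 0$, i.e., $y_i^* = s_i^\rho$. Thus $y^*$ is exactly of the form \eqref{Eq:y-star}, and Theorem~\ref{Thm:KKT-Equivalence} yields that $x^*$ is S-stationary.

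The only nontrivial step is the last one, where one must rule out $y_i^* = 0$ for $i \in I_0(x^*)$; this is precisely the same mechanism underlying Remark~\ref{Rem:Biactive-empty} and hinges crucially on property (P.1), namely that the unique minimizer $s_i^\rho$ of $p_i^\rho$ is strictly positive. Everything else is bookkeeping between the penalty-problem KKT system and the KKT system of \eqref{SPOref}.
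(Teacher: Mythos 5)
Your proof is correct and follows essentially the same route as the paper: exploit $\varepsilon_k = 0$ at termination to obtain exact conditions, use convexity of $p_i^\rho$ together with the strictly positive unique minimizer $s_i^\rho$ to force $y_i^* = s_i^\rho$ for $i \in I_0(x^*)$ (the paper argues $\nabla p_i^\rho(y_i^*) = (\nu_y^*)_i \ge 0 \Rightarrow y_i^* \ge s_i^\rho$ rather than first ruling out $y_i^* = 0$, but it is the same mechanism), and conclude via Theorem~\ref{Thm:KKT-Equivalence}. One cosmetic caveat: your explicit multiplier tuple with $\gamma^* = \alpha e$ and the term $-\nu_y^T y$ in the Lagrangian actually verifies the KKT system of \eqref{SPOcp} (which carries the constraint $y \ge 0$) rather than literally that of \eqref{SPOref}, whose Lagrangian has no $\nu_y$-term; since you also establish that $y^*$ has the form \eqref{Eq:y-star}, Theorem~\ref{Thm:KKT-Equivalence} makes the two KKT systems interchangeable, so the argument closes exactly as in the paper.
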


\begin{proof}
Let $\lambda^*, \mu^*, \nu_x^*, \nu_y^*$ be the associated multipliers 
of the iterate $ (x^*, y^*) $. We first show that
$ x^* $ is S-stationary. Since $ \delta = 0 $ by assumption,
we also have from step 3 that $ \varepsilon_k = 0 $ for the corresponding iteration $ k $. Therefore, step 2 immediately
implies that $ x^* $ is feasible for the sparse optimization
problem \eqref{SPO}, and $ (x^*, \lambda^*) $ satisfies the
complementarity conditions. Now, consider an index $ i $ with
$ x_i^* \neq 0 $. Then steps 2 and 3 yield  $y_i^* = 0$ and $(\nu_x^*)_i = 0$. Using step 2 again, this yields
$ \nabla_{x_i} L^{SP}(x^*, \lambda^*, \mu^*) = 0$. By
definition, this shows that $ x^* $ is an S-stationary point.

Next, consider an index $ i $ with $ x_i^* = 0 $. Then we have
$ \nabla p_i^\rho(y_i^*) - (\nu_y^*)_i = 0 $ from step 2,
which implies $\nabla p_i^\rho(y_i^*) \ge 0$ and hence $y_i^* \ge s_i^\rho > 0$ by convexity of $p_i^\rho$ and uniqueness of $s_i^\rho$. As a result $(\nu_y^*)_i = 0$ and $y_i^* = s_i^\rho$.
Consequently, the vector $ y^* $ satisfies the relation
\eqref{Eq:y-star}. Theorem~\ref{Thm:KKT-Equivalence} therefore
shows that $ (x^*, y^*) $ is a KKT point of \eqref{SPOref}.
\end{proof}

\noindent
We next want to show an exactness result for our penalty
approach. In principle, there are two directions one is interested in,
namely that a stationary point of the penalized problem \eqref{Psq} yields a stationary point of the sparse optimization problem \eqref{SPO}, and vice versa. Exactness results usually concentrate 
on the opposite direction only. In fact, this direction is
usually the easier one and, for our particular setting, contained
in the following result.

\begin{thm}\label{Thm:Stat-1}
Let $(x^*,y^*)$ be a stationary point of \eqref{SPOcp}. Then there exists an $\alpha^* > 0$ such that $(x^*,y^*)$ is a stationary point of \eqref{Psq} for all $\alpha \geq \alpha^*$.
\end{thm}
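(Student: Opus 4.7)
The plan is to directly construct KKT multipliers for the penalized problem \eqref{Psq} from those of \eqref{SPOcp}, leveraging two facts established earlier: Remark~\ref{Rem:Biactive-empty}, which guarantees that the bi-active set at any stationary point of \eqref{SPOcp} is empty, and the structure of $y^*$ from \eqref{Eq:y-star}, which implies $y_i^* = s_i^\rho$ whenever $x_i^* = 0$ and $y_i^* = 0$ whenever $x_i^* > 0$.

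First, I would write down the KKT system of \eqref{SPOcp}, which supplies multipliers $\lambda^* \geq 0$, $\mu^*$, $\nu_x^* \geq 0$, $\nu_y^* \geq 0$ (for $g$, $h$, and the two nonnegativity constraints) together with a multiplier $\gamma^* \in \RR^n$ for the component-wise constraints $x_i y_i = 0$. Next, I would write down the KKT system of \eqref{Psq} and look for multipliers $(\tilde\lambda, \tilde\mu, \tilde\nu_x, \tilde\nu_y)$. The natural ansatz $\tilde\lambda := \lambda^*$, $\tilde\mu := \mu^*$ forces the values
\begin{equation*}
   \tilde\nu_x := \nabla_x L^{SP}(x^*, \lambda^*, \mu^*) + \alpha y^*, \qquad \tilde\nu_y := \nabla p^\rho(y^*) + \alpha x^*.
\end{equation*}
The complementarity conditions $\tilde\nu_x \circ x^* = 0$ and $\tilde\nu_y \circ y^* = 0$ are then automatic: since the bi-active set is empty, for every index $i$ exactly one of $x_i^*$ or $y_i^*$ vanishes; moreover the corresponding component of the candidate multiplier is also zero, because for $i \notin I_0(x^*)$ we have $\nabla_{x_i} L^{SP}(x^*,\lambda^*,\mu^*) = 0$ by S-stationarity (Theorem~\ref{Thm:KKT-Equivalence}), whereas for $i \in I_0(x^*)$ we have $[\nabla p^\rho(y^*)]_i = \nabla p_i^\rho(s_i^\rho) = 0$ by the minimality of $s_i^\rho$.

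The remaining (and essentially the only substantive) step is checking the sign conditions $\tilde\nu_x \geq 0$ and $\tilde\nu_y \geq 0$; this is what pins down the threshold $\alpha^*$. For $i \in I_0(x^*)$, one has $(\tilde\nu_x)_i = \nabla_{x_i} L^{SP}(x^*, \lambda^*, \mu^*) + \alpha s_i^\rho$, which is nonnegative for $\alpha \geq -\nabla_{x_i} L^{SP}(x^*,\lambda^*,\mu^*)/s_i^\rho$. For $i \notin I_0(x^*)$, one has $(\tilde\nu_y)_i = \nabla p_i^\rho(0) + \alpha x_i^*$; observe that $\nabla p_i^\rho(0) \leq 0$ since $p_i^\rho$ is convex with unique minimum at $s_i^\rho > 0$, so nonnegativity holds for $\alpha \geq -\nabla p_i^\rho(0)/x_i^*$ (here $x_i^* > 0$ by choice of index). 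Setting $\alpha^*$ equal to the maximum of all these thresholds (and zero) yields the claim. The main point to be careful about is that the emptiness of the bi-active set is essential: otherwise an index might contribute simultaneously to both $\tilde\nu_x$ and $\tilde\nu_y$, and the requirement $\nabla p_i^\rho(0) < 0$ together with $x_i^* = 0$ would produce a conflicting sign requirement that could not be balanced by increasing $\alpha$.
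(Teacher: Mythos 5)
Your proposal is correct and follows essentially the same route as the paper: keep $\lambda^*,\mu^*$, read off the forced values of the nonnegativity multipliers $\tilde\nu_x = \nabla_x L^{SP}(x^*,\lambda^*,\mu^*)+\alpha y^*$ and $\tilde\nu_y = \nabla p^\rho(y^*)+\alpha x^*$, use the empty bi-active set and $y_i^*=s_i^\rho$ on $I_0(x^*)$ to get complementarity for free, and choose $\alpha^*$ as the maximum of the thresholds $-\nabla_{x_i}L^{SP}(x^*,\lambda^*,\mu^*)/s_i^\rho$ and $-\nabla p_i^\rho(0)/x_i^*$ that make the signs work. This matches the paper's construction (the paper merely writes these thresholds with absolute values, giving a marginally larger but equally valid $\alpha^*$).
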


\begin{proof}
By stationarity of $(x^*,y^*)$ for \eqref{SPOcp}, 
we obtain, after a simple transformation, that
\begin{equation}\label{eqn:SSQstat} 
	0 = \begin{pmatrix} \nabla f(x^*) + h'(x^*)^T \mu^* + g'(x^*)^T \lambda^* + \sum_{I_0(x^*)} \alpha_i^x e_i \\
		\nabla p^\rho(y^*) + \sum_{I_0(y^*)} \alpha_i^y e_i\end{pmatrix}
\end{equation}
holds for some multipliers $\mu^*, \lambda^*, \alpha^x, \alpha^y$ 
with sign constraints only w.r.t.\ to $\lambda^*$, and further 
$y_i^* = s_i^\rho$ if and only if $i \in I_0(x^*)$. On the other
hand, the point $(x^*,y^*)$ is stationary for \eqref{Psq} if there exists multipliers $\mu$, $\lambda \ge 0$, $\nu^x \ge 0$, and $\nu^y \ge 0$ such that
\begin{equation}\label{Eq:Statcons-2}
	0 = \begin{pmatrix} \nabla f(x^*) + h'(x^*)^T \mu + g'(x^*)^T \lambda - \sum_{I_{0}(x^*)} \nu^x_i e_i + \alpha \sum_{I_{0}(x^*)}s_i^\rho e_i \\
		\nabla p^\rho(y^*) - \sum_{I_{0}(y^*)} \nu_i^y e_i + \alpha \sum_{I_{0}(y^*)}x_i^*e_i\end{pmatrix}
\end{equation}
holds. Now, setting $ \mu = \mu^*$, $\lambda = \lambda^*$, 
$\nu_i^x = \alpha s_i^\rho - \alpha_i^x \ (i \in I_0(x^*))$ and $\nu_i^y = \alpha x_i^* - \alpha_i^y \ (i \in I_0(y^*))$
for an arbitrary $ \alpha > 0 $, we see that \eqref{Eq:Statcons-2}
is a direct consequence of \eqref{eqn:SSQstat}. Moreover,
for 
\begin{equation*}
	\alpha \geq \alpha^* := \max \bigg\{ \frac{| \alpha_i^x |}{s_i^{\rho}} \ (i \in I_0(x^*)), \frac{| \alpha_i^y|}{x_i^*} \ (i \in I_0(y^*)) \bigg\},
\end{equation*}
we also have $ \nu^x_i \geq 0 \ (i \in I_0(x^*)) $ and 
$ \nu^y_i \geq 0 \ (i \in I_0(y^*)) $. It follows that 
$ (x^*,y^*) $ is a stationary point of \eqref{Psq} for all
$ \alpha \geq \alpha^* $.
\end{proof}

\noindent
Note that Theorem~\ref{Thm:Stat-1} does not require any
constraint qualification. Typical exactness results of this
kind need an MFCQ-type assumption which, here, is not necessary
for two reasons: First, the (potentially simple) standard
constraints are still in the constraints of the penalized
problem, and second, the penalized (difficult) complementarity
constraint has a very simple structure such that no constraint
qualification is needed to verify the exactness statement 
from Theorem~\ref{Thm:Stat-1}. In fact, this complementarity
constraint alone satisfies automatically any constraint
qualification.

Before presenting an exactness result for the other direction,
we first consider a simple example.

\begin{example}\label{Ex:Exact-Counter}
Consider the one-dimensional sparse optimization problem 
\begin{equation*}
	\min_x \ -x + \| x \|_0 \quad \text{s.t.} \quad x \geq 0
\end{equation*}
(note that we take $ \rho = 1 $ only for the sake of simplicity).
Then $ x^* := 0 $ is both a local minimum and an S-stationary point.
Using the function $ p_i^{\rho} $ from Example~\ref{Ex:p-rho} (b),
the corresponding penalized problem \eqref{Psq} reads
$$ 
    \min_{x,y} \ -x + \frac{1}{2} (y - \sqrt{2})^2 
    + \alpha xy
    \quad \text{s.t.} \quad x \geq 0, \ y \geq 0
$$
with an arbitrary $ \alpha > 0 $. Then, for any $ \alpha \geq 1/\sqrt{2} $, 
$$
   (x,y) := \frac{1}{\alpha}\left(\sqrt{2} - \frac{1}{\alpha}, 1\right)
$$
are stationary points of problem \eqref{Psq}, but
none of these points satisfy the complementarity conditions.
On the other hand, observe that, for $ \alpha \to \infty $, the
corresponding sequence of stationary points converges to
$ (x^*,y^*) = (0,0) $, hence the complementarity 
condition $ x^* y^* = 0 $ is satisfied in the limit. Note, 
however, that this limit point does not satisfy the relation
\eqref{Eq:y-star} which, in particular, guarantees that the 
bi-active set is empty.
\end{example}

\noindent
The following result contains an exactness statement for the other
direction. This result may
be viewed as a generalization of a related theorem given 
in \cite{Ralph2004} in the context of mathematical programs with
equilibrium contraints (MPECs), though our assumptions are weaker.

\begin{thm}\label{Thm:Exact-2}
Let $(x^*,y^*)$ be a stationary point of \eqref{SPOcp}
such that SP-MFCQ holds at $ x^* $. 
Then there exists an $ \alpha^* > 0 $ and a
neighborhood $U$ of $(x^*,y^*)$ such that for all $\alpha \geq \alpha^*$, every stationary point of \eqref{Psq} in $U$ is a stationary point of \eqref{SPOcp}.
\end{thm}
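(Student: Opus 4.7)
The plan is to exploit the empty bi\-active structure at $(x^*,y^*)$ guaranteed by Remark~\ref{Rem:Biactive-empty}, which lets me partition the indices into $I^* := \{i : x_i^* > 0\}$ (with $y_i^* = 0$) and $J^* := \{i : x_i^* = 0\}$ (with $y_i^* = s_i^\rho > 0$ by \eqref{Eq:y-star}). First I would shrink $U$ so that every $(\bar x,\bar y)\in U$ satisfies $\bar x_i \geq x_i^*/2>0$ on $I^*$, $\bar y_i \geq s_i^\rho/2>0$ on $J^*$, and $I_g(\bar x) \subseteq I_g(x^*)$. Complementarity in \eqref{Psq} then automatically forces $\nu^x_i = 0$ on $I^*$ and $\nu^y_i = 0$ on $J^*$. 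The core of the argument is to show that $\bar x \circ \bar y = 0$ at any such stationary point for $\alpha$ large enough; once that is proved, the stationarity equations of \eqref{Psq} coincide verbatim with those of \eqref{SPOcp} upon choosing the complementarity multiplier as $\gamma := \alpha e$, so no further work is needed to produce the \eqref{SPOcp} multipliers.

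The easy half of the core claim concerns $i \in I^*$. The $y$-component of the stationarity system of \eqref{Psq} reads $\nabla p^\rho_i(\bar y_i) + \alpha \bar x_i - \nu^y_i = 0$; if I suppose $\bar y_i > 0$, then $\nu^y_i = 0$ and $\bar x_i = -\nabla p^\rho_i(\bar y_i)/\alpha$. Because $s_i^\rho > 0$ is the unique minimizer of the convex $p_i^\rho$, $\nabla p_i^\rho(0) < 0$, and continuity provides a uniform bound $|\nabla p^\rho_i(\bar y_i)| \leq C$ on $U$. Choosing $\alpha > 2C/\min_{i \in I^*} x_i^*$ yields $\bar x_i < x_i^*/2$, contradicting $\bar x_i \geq x_i^*/2$ and forcing $\bar y_i = 0$.

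The main obstacle is the analogous claim for $i \in J^*$. Now $\bar y_i$ stays bounded away from $0$, so $\bar x_i > 0$ would yield, via the $x$-component at index $i$, $[\nabla f(\bar x) + h'(\bar x)^T \mu + g'(\bar x)^T \lambda]_i = -\alpha \bar y_i \to -\infty$, which is harmless in itself unless I control the multipliers. I would argue by contradiction via sequences $\alpha_k \to \infty$, $(\bar x^k,\bar y^k) \to (x^*,y^*)$ stationary for \eqref{Psq}($\alpha_k$), with a fixed $i_0 \in J^*$ satisfying $\bar x^k_{i_0} > 0$ for every $k$. The $i_0$-th $x$-equation immediately rules out bounded multipliers, so the common maximum norm $N_k$ of $(\mu^k,\lambda^k,\nu^{x,k},\nu^{y,k})$ tends to infinity; the same equation, divided by $N_k$, has a bounded left-hand side and $\bar y^k_{i_0} \to s_{i_0}^\rho > 0$, which forces $\alpha_k/N_k$ to stay bounded. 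Along a subsequence, the scaled multipliers $\tilde\mu,\tilde\lambda,\tilde\nu^x,\tilde\nu^y$ and the ratio $c := \lim \alpha_k/N_k \in [0,\infty)$ therefore converge.

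Passing to the limit in both scaled stationarity equations, and using the complementarities $\tilde\nu^x_i = 0$ on $I^*$ and $\tilde\nu^y_i = 0$ on $J^*$ (inherited from $\nu^{x,k}\circ \bar x^k = 0$ and $\nu^{y,k}\circ \bar y^k = 0$), I would obtain
\begin{equation*}
\sum_{i \in I_g(x^*)} \tilde\lambda_i \nabla g_i(x^*) + \sum_{j=1}^p \tilde\mu_j \nabla h_j(x^*) + \sum_{i \in J^*}\bigl(c\, s_i^\rho - \tilde\nu^x_i\bigr)\, e_i = 0,
\end{equation*}
together with $\tilde\nu^y_i = c\, x_i^*$ on $I^*$, where $\tilde\lambda \geq 0$ while $\tilde\mu$ and the bracketed quantities are free in sign. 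SP-MFCQ at $x^*$ is precisely the positive linear independence that kills such a relation, so $\tilde\mu = 0$, $\tilde\lambda = 0$ and $\tilde\nu^x_i = c\, s_i^\rho$ on $J^*$. But $\bar x^k_{i_0} > 0$ forces $\nu^{x,k}_{i_0} = 0$, hence $\tilde\nu^x_{i_0} = 0$, and together with $s_{i_0}^\rho > 0$ this yields $c = 0$. Then every scaled multiplier vanishes in the limit, contradicting the normalization that keeps their maximum equal to $1$. This contradiction yields $\bar x_i = 0$ on $J^*$ and completes the verification of $\bar x \circ \bar y = 0$.
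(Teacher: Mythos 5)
Your proof is correct and follows essentially the same route as the paper: exploit the empty bi-active set, force $\bar y_i=0$ on $\{x_i^*>0\}$ via the $y$-equation, then force $\bar x_i=0$ on $I_0(x^*)$ via the $x$-equation with SP-MFCQ controlling the multipliers, and finally observe that the stationarity systems coincide once complementarity holds. The only (immaterial) difference is bookkeeping: the paper first proves boundedness of the multiplier sequence from SP-MFCQ and then derives the contradiction, whereas you normalize all multipliers by their maximum norm and let SP-MFCQ annihilate the scaled limit.
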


\begin{proof}
Assume, by contradiction, that there is a sequence $ \alpha_k \to \infty$ and a sequence $\{(x^k,y^k)\}$ such that $(x^k,y^k) \to (x^*,y^*)$, where $(x^k,y^k)$ is stationary for \eqref{Psq}
with $ \alpha = \alpha_k $, but
not stationary for \eqref{SPOcp}. Recall that stationarity of $(x^k,y^k)$ for \eqref{Psq} with $ \alpha = \alpha_k $ implies the existence of multipliers $(\mu^k,\lambda^k, \nu_x^k, \nu_y^k)$ 
such that 
\begin{equation}
	\label{eqn:PSQstat}
	0 = \begin{pmatrix}
		\nabla f(x^k) + h'(x^k)^T \mu^k + \sum_{i \in I_g(x^k)} \lambda_i^k \nabla g_i(x^k) + \alpha_k y^k - \sum_{i \in I_{0}(x^k)} (\nu_x^k)_i e_i \\
		\nabla p^\rho(y^k) + \alpha_k x^k - \sum_{i \in I_{0}(y^k)} (\nu_y^k)_ie_i
	\end{pmatrix},
\end{equation}
where $x^k \in X$ and $y^k \ge 0$. We divide the proof into
several steps. \medskip 

\noindent 
(a) Since $ (x^*, y^*) $ is a stationary point of \eqref{SPOcp},
we recall from Remark~\ref{Rem:Biactive-empty} that 
the biactive set $ \mathcal{B} (x^*,y^*) = \{ i \, | \ (x_i^*,y_i^*)
= (0,0) \} $ is empty. We stress that this observation
plays a central role for the subsequent proof. \medskip

\noindent 
(b) Since $ x^k \to x^* $, the continuity of $ g $ implies that 
$ I_g(x^k) \subseteq I_g(x^*) $ for all $ k $ sufficiently large.
Because $ \lambda_i^k = 0 $ for all $ i \not\in I_g(x^k) $, we 
may therefore replace the index set $ I_g(x^k) $ in \eqref{eqn:PSQstat} 
with the constant set $ I_g(x^*) $ for all $ k $ large enough.
Hence, we have both 
\begin{equation}\label{eqn:PSQstat-1}
	\nabla f(x^k) + h'(x^k)^T \mu^k + \sum_{i \in I_g(x^*)} \lambda_i^k \nabla g_i(x^k) + \alpha_k y^k - 
	\sum_{i \in I_{0}(x^k)} (\nu^k_x)_i e_i= 0
\end{equation}
and 
\begin{equation}\label{eqn:PSQstat-2}
	\nabla p^\rho(y^k) + \alpha_k x^k - 
	\sum_{i \in I_{0}(y^k)} (\nu_y^k)_i e_i = 0
\end{equation}
for all $ k $ sufficiently large. \medskip 

\noindent 
(c) We claim that $ y_i^k = 0 $ holds for all $ i \in I_0(y^*) $
and all $ k $ sufficiently large. To this end, assume there is
an index $ i \in I_0(y^*) $ and a subsequence $ \{ y_i^k \}_K $
such that $ 0 < y_i^k \to_K y_i^* = 0 $. It then follows that
$ (\nu_y^k)_i = 0 $ for all $ k \in K $. Hence \eqref{eqn:PSQstat-2}
implies 
\begin{equation*}
	0 = \nabla p^{\rho} (y_i^k) + \alpha_k x_i^k \quad \forall k \in K.
\end{equation*}
On the other hand, the right-hand side tends to $ + \infty $ for 
$ k \to_K \infty $ since $ y_i^k \to_K y_i^* $ and the continuity
of $ \nabla p^{\rho} $ implies the convergence of the first term,
whereas the second term is unbounded since $ \alpha_k \to_K \infty $
and $ x_i^k \to x_i^* $ with $ x_i^* > 0 $ due to (a). \medskip 

\noindent 
(d) We claim that
\begin{equation}\label{Eq:Proof-d}
	\bigg[ \nabla f(x^k) + h'(x^k)^T \mu^k + \sum_{i \in I_g(x^*)}
	\lambda_i^k \nabla g_i(x^k) \bigg]_i = 0 \quad 
    \forall i \in I_0(y^*)
\end{equation}
for all $ k $ sufficiently large. This follows from \eqref{eqn:PSQstat-1} together with the fact that $ y_i^k = 0 $
for all $ i \in I_0(y^*) $ and $ k $ sufficiently large by part (c),
and since $ I_0(x^k) \subseteq I_0 (x^*) $ with $ I_0(x^*) \cap
I_0(y^*) = \emptyset $ by part (a). \medskip

\noindent 
(e) In view of (a) and \eqref{Eq:Proof-d}, we can find scalars 
$ \gamma_i^k \in \mathbb{R} $ for $ i \in I_0(x^*) $ such that 
\begin{equation*}
   \nabla f(x^k) + h'(x^k)^T \mu^k + \sum_{i \in I_g(x^*)}
   \lambda_i^k \nabla g_i(x^k) + \sum_{i \in I_0(x^*)} 
   \gamma_i^k e_i = 0
\end{equation*}
holds for all $ k $ large enough. Due to the assumed SP-MFCQ
condition, a standard argument then shows that the sequence 
of multipliers
$ \big\{ \big( \lambda_i^k ( i \in I_g(x^*)), \mu^k,
\gamma_i^k (i \in I_0(x^*)) \big) \big\} $ remains bounded. \medskip

\noindent 
(f) We also claim that $ x_i^k = 0 $ for all $ i \in I_0(x^*) $ 
and all $ k $ sufficiently large. Assume, by contradiction, that
there is a subsequence $ \{ x_i^k \}_K $ with $ 0 < x_i^k \to_K 
x_i^* $. Then $ (\nu_x^k)_i = 0 $ holds for all $ k \in K $.
Consequently, we obtain from \eqref{eqn:PSQstat-1} that 
\begin{equation*}
	0 = \nabla_{x_i} L^{SP} (x^k,\lambda^k, \mu^k) + \alpha_k y_i^k.
\end{equation*}
Now, the first term on the right-hand side remains bounded by
continuity of $ \nabla_{x_i} L^{SP} $ as well as the fact
that $ x^k \to x^* $ and the boundedness of the multiplier
sequences $ \{ \lambda^k \} $ and $ \{ \mu^k \} $, cf.\ part (e).
On the other hand, the second term converges to $ + \infty $ since
$ \alpha_k \to \infty $ and $ y_i^k \to_K y_i^* > 0 $ for $ i 
\in I_0(x^*) $, see part (a). \medskip

\noindent 
(g) In view of parts (c) and (f), we, in particular, have 
$ x_i^k y_i^k = 0 $ for all $ i = 1, \ldots, n $ and all $ k $
sufficiently large. This shows that $ (x^k,y^k) $ is, at least,
feasible for \eqref{SPOcp}. Furthermore, since $ (x^k,y^k) \to 
(x^*,y^*) $ and $ \mathcal{B} (x^*,y^*) = \emptyset $ by part (a),
we also have $ \mathcal{B} (x^k,y^k) = \emptyset $ for all 
$ k $ large enough. We can therefore define the multipliers
\begin{equation*}
	(\alpha_i^x)^k := \alpha_k y_i^k - (\nu_x^k)_i \quad 
	(i \in I_0(x^k)) \quad \text{and} \quad 
	(\alpha_i^y)^k := \alpha_k x_i^k - (\nu_y^k)_i \quad 
	(i \in I_0(y^k)),
\end{equation*}
so that the stationary conditions of the penalized problem yield
\begin{equation*}
	0 = \begin{pmatrix}
		\nabla f(x^k) + h'(x^k)^T \mu^k + \sum_{i \in I_g(x^k)} \lambda_i^k \nabla g_i(x^k) + \sum_{I_0(x^k)} (\alpha^x_i)^k e_i \\
		\nabla p^\rho(y^k) + \sum_{I_0(y^k)} (\alpha^y_i)^k e_i
	\end{pmatrix}
\end{equation*}
for all $ k $ sufficiently large, cf.\ \eqref{eqn:PSQstat}.
Altogether, this implies that $ (x^k, y^k) $ is a stationary
point of \eqref{SPOref} for all $ k $ sufficiently large, and this
contradiction completes the proof
\end{proof}

Observe that part (d) of the previous proof already shows
that $ x^k $, for all $ k $ sufficiently large, is an
S-stationary point of \eqref{SPO}, and that this holds
without any constraint qualification. The SP-MFCQ assumption
is mainly used to show that the pair $ (x^k,y^k) $ is
eventually feasible for \eqref{SPOcp}, i.e., satisfies the
complementarity condition $ x \circ y = 0 $.

Note further that Example~\ref{Ex:Exact-Counter} does not contradict
the statement of Theorem~\ref{Thm:Exact-2}. Though SP-MFCQ
holds for this example in $ x^* = 0 $, the sequence of stationary
points of the corresponding penalized problems converges to 
$ (0,0) $, which is not a stationary point \eqref{SPOcp}, 
as assumed in Theorem~\ref{Thm:Exact-2}.

The following result provides a relation between the second-order
condition of the penalized problem \eqref{Psq} and SP-SOSC 
for the sparse optimization problem.

\begin{thm}
Let $(x^*,y^*)$ be stationary for \eqref{Psq} and feasible for \eqref{SPOcp}. Assume that $p^\rho$ is twice continuously differentiable and that standard SOSC holds at \eqref{Psq}.
Then SP-SOSC holds at $x^*$.
\end{thm}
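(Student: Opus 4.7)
The plan is to lift an arbitrary nonzero direction $d \in \mathcal{C}^{SP}(x^*,\lambda^*)$ to $\tilde d := (d,0) \in \RR^{2n}$ and exploit the block structure of the Hessian of the Lagrangian of \eqref{Psq} together with the standard SOSC. First I would upgrade the hypothesis and observe that $(x^*,y^*)$ is in fact stationary for \eqref{SPOcp}, not merely feasible for it: if $(\mu^*,\lambda^*,\nu_x^*,\nu_y^*)$ are the multipliers from stationarity of $(x^*,y^*)$ for \eqref{Psq}, then choosing $\eta_i := \alpha$ for every $i$ as the multiplier for the equation $x_i y_i = 0$ in \eqref{SPOcp}, and keeping all other multipliers unchanged, turns the KKT system of \eqref{SPOcp} into the KKT system of \eqref{Psq} verbatim. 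Remark~\ref{Rem:Biactive-empty} then applies, so the biactive set $\mathcal{B}(x^*,y^*)$ is empty; in particular $y^*$ is given by \eqref{Eq:y-star} and $x^*$ is S-stationary for \eqref{SPO} with multipliers $(\lambda^*,\mu^*)$ by Theorem~\ref{Thm:KKT-Equivalence}.

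Next I would compute the relevant Hessian. Because the penalty term $\alpha x^T y$ is bilinear, its pure second derivatives in $x$ and in $y$ vanish while the mixed derivative is $\alpha I$; the sign-constraint terms contribute nothing in second order. Hence
\begin{equation*}
	H \;=\; \begin{pmatrix} \nabla_{xx}^2 L^{SP}(x^*, \lambda^*, \mu^*) & \alpha I \\ \alpha I & \nabla^2 p^\rho(y^*) \end{pmatrix},
\end{equation*}
so evaluating the quadratic form on $\tilde d = (d,0)$ gives
\begin{equation*}
	\tilde d^{\,T} H \tilde d \;=\; d^T \nabla_{xx}^2 L^{SP}(x^*, \lambda^*, \mu^*)\, d,
\end{equation*}
with the cross-term vanishing because of the zero $y$-component.

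The key step is verifying that $\tilde d$ lies in the critical cone of \eqref{Psq} at $(x^*,y^*)$. For the equality constraints $h_j$ and for the active inequalities $g_i$, the required (in)equalities are inherited directly from $d \in \mathcal{C}^{SP}(x^*,\lambda^*)$. The sign constraint $x_i \ge 0$ is active precisely on $i \in I_0(x^*)$, where $d_i = 0$ by definition of $\mathcal{C}^{SP}$, which is compatible with every sign of $(\nu_x^*)_i$. The sign constraint $y_i \ge 0$ is active on those $i$ with $y_i^* = 0$, and by emptiness of the biactive set this is exactly $I_0(y^*) = \{i : x_i^* > 0\}$; there $(\tilde d_y)_i = 0$ is again compatible with both the strict- and non-strict-complementary case. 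Since $d \ne 0$ forces $\tilde d \ne 0$, standard SOSC at \eqref{Psq} gives $\tilde d^{\,T} H \tilde d > 0$, which via the displayed identity is exactly SP-SOSC at $(x^*, \lambda^*, \mu^*)$.

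The main obstacle is the critical-cone containment: without emptiness of $\mathcal{B}(x^*,y^*)$, an index with $x_i^* = y_i^* = 0$ would carry active sign constraints on both $x_i$ and $y_i$ simultaneously, and the lifted direction $\tilde d = (d,0)$ would have to respect restrictions that the SP-critical cone does not see. This is precisely where the first paragraph is used, and why the assumed feasibility of $(x^*,y^*)$ for \eqref{SPOcp} is essential: it is what activates Remark~\ref{Rem:Biactive-empty} and allows the lifting to proceed unconditionally.
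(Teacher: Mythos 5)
Your proof is correct and follows essentially the same route as the paper's: lift $d \in \mathcal{C}^{SP}(x^*,\lambda^*)$ to $(d,0)$, verify membership in the critical cone of \eqref{Psq}, and read off the quadratic form from the block Hessian, whose cross terms vanish because the $y$-component is zero. The preliminary paragraph on emptiness of the biactive set is not actually needed for the cone containment --- since $d_i=0$ for $i \in I_0(x^*)$ and the lifted $y$-component is identically zero, every sign or equality condition at a biactive index is satisfied automatically --- though it does serve to justify cleanly that $x^*$ is S-stationary, so that SP-SOSC is well posed.
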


\begin{proof}
An elementary calculation shows that standard SOSC for \eqref{Psq} is given by
\begin{align}
    \begin{pmatrix}
        dx \\ dy
    \end{pmatrix}^T 
    \begin{pmatrix}
        \nabla_{xx}^2 L^{SP}(x^*,\mu^*,\lambda^*) & \alpha I \\ \alpha I & \nabla_{yy}^2 p^\rho(y)
    \end{pmatrix}
    \begin{pmatrix}
        dx \\ dy
    \end{pmatrix}  > 0 \notag\\
    \Longleftrightarrow \quad (dx)^T \nabla_{xx}^2 L^{SP}(x^*,\mu^*,\lambda^*) dx + 2 \alpha (dx)^T dy + (dy)^T \nabla_{yy}^2 p^\rho(y) dy > 0, \label{eqn:PSQSOSC2}
\end{align}
for all $(dx, dy) \neq (0, 0)$ such that
\begin{align}
    dx_i &= 0, \ (i \in I_0(x^*), \ \nu_i^x > 0), \nonumber \\
    dx_i &\ge 0, \ (i \in I_0(x^*), \ \nu_i^x = 0), \nonumber \\
    \nabla h_i(x^*)^T dx &= 0, \ (i = 1,...,m) \label{cnd3}, \\
    \nabla g_i(x^*)^T dx &= 0, \ (i \in I_g(x^*), \ \lambda^*_i > 0) \label{cnd4},\\
    \nabla g_i(x^*)^T dx &\le 0, \ (i \in I_g(x^*), \ \lambda^*_i = 0) \label{cnd5},\\
    dy_i &= 0, \ (i \in I_0(y^*), \nu_i^y > 0), \nonumber \\
    dy_i &\ge 0, \ (i \in I_0(y^*), \nu_i^y = 0), \nonumber
\end{align}
where $\nu_i^x, \nu_i^y$ and $\lambda^*_i$ are the Lagrangian multipliers associated to the sign constraints on $x$ and $y$ and to the inequality constraints governed by $g$, respectively. Now, choose $dy = 0$ and $dx$ such that $dx_i = 0$ for all $i \in I_0(x^*)$ and conditions (\ref{cnd3}), (\ref{cnd4}) and (\ref{cnd5}) hold. The claim then follows directly from (\ref{eqn:PSQSOSC2}).
\end{proof}
    
A difficulty with (exact) penalty approaches for general optimization
problems is that accumulation points are not guaranteed to be feasible.
In our setting, this feasibility issue arises for the complementarity
constraints only, and it turns out that, due to the particular
structure of our reformulated problem, these complementarity
conditions are satisfied even if Algorithm~\ref{PenaltyAlg}
does not terminate after finitely many iterations.

\begin{thm}\label{Thm:Feasibility}
Let $\delta = 0$, and let $ \{ (x^{k}, y^{k}) \} $ be an infinite
sequence generated by Algorithm~\ref{PenaltyAlg} such that 
$x^{k+1} \to_K x^*$ on some subsequence $K$. Then there is a subset $K'\subseteq K$ such that $y^{k+1} \to_{K'} y^*$ and 
$ x_i^* y_i^* = 0$ holds for all $i = 1,...,n$.
\end{thm}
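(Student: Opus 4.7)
The plan is to exploit the approximate KKT conditions collected in step~2 along the subsequence $K$, together with the fact that an infinite run of Algorithm~\ref{PenaltyAlg} forces $\alpha_k = \alpha_0 \beta^k \to \infty$ while $\varepsilon_k \searrow 0$. First I would establish componentwise boundedness of $\{y^{k+1}\}_K$, extract a convergent subsequence $K' \subseteq K$ by Bolzano-Weierstrass so that $y^{k+1} \to_{K'} y^*$, and only then verify that the limit pair satisfies $x_i^* y_i^* = 0$ for every index $i$.

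For the boundedness step, fix $i$ and split according to the approximate complementarity $\min\{y_i^{k+1}, (\nu_y^{k+1})_i\} \le \varepsilon_k$: either $y_i^{k+1} \le \varepsilon_k$ is already bounded, or $(\nu_y^{k+1})_i \le \varepsilon_k$, in which case the componentwise form of the $y$-stationarity in step~2 together with $\alpha_k x_i^{k+1} \ge 0$ gives $(p_i^\rho)'(y_i^{k+1}) \le 2 \varepsilon_k$. Property (P.1) is decisive here: since $p_i^\rho$ is convex with a unique minimum at $s_i^\rho > 0$, its derivative $(p_i^\rho)'$ is nondecreasing on $\RR$ and strictly positive on $(s_i^\rho,\infty)$. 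Picking any fixed $b_i > s_i^\rho$ yields $(p_i^\rho)'(b_i) > 0$, and for all $k$ large enough that $2 \varepsilon_k < (p_i^\rho)'(b_i)$ monotonicity of $(p_i^\rho)'$ forces $y_i^{k+1} < b_i$. In either sub-case $y_i^{k+1}$ stays below a constant independent of $k$, so $\{y^{k+1}\}_K$ is bounded in $\RR^n$ and admits a convergent subsequence $y^{k+1} \to_{K'} y^*$ with $y^* \ge 0$.

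For the complementarity conclusion, fix $i$ again. If $x_i^* = 0$, there is nothing to prove. Otherwise $x_i^* > 0$, hence $\alpha_k x_i^{k+1} \to \infty$ along $K'$, while $(p_i^\rho)'(y_i^{k+1})$ stays bounded by continuity of $\nabla p^\rho$ and convergence of $y^{k+1}$. The $y$-stationarity inequality from step~2 then yields componentwise
\begin{equation*}
   (\nu_y^{k+1})_i \;\ge\; (p_i^\rho)'(y_i^{k+1}) + \alpha_k x_i^{k+1} - \varepsilon_k \;\longrightarrow\; \infty,
\end{equation*}
so that $(\nu_y^{k+1})_i > \varepsilon_k$ eventually. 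The approximate complementarity $\min\{y_i^{k+1}, (\nu_y^{k+1})_i\} \le \varepsilon_k$ then demands $y_i^{k+1} \le \varepsilon_k \to 0$, giving $y_i^* = 0$ and hence $x_i^* y_i^* = 0$.

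The main obstacle is the boundedness of $\{y^{k+1}\}_K$, since neither the iteration scheme nor the functional $p^\rho$ supplies any a priori control on $\|y^{k+1}\|$. It is precisely the uniqueness requirement in (P.1) that rules out a zero-plateau of $(p_i^\rho)'$ past $s_i^\rho$ and thereby converts smallness of $(p_i^\rho)'(y_i^{k+1})$ into an honest upper bound on $y_i^{k+1}$. Once boundedness is secured, the componentwise complementarity statement becomes a direct consequence of $\alpha_k \to \infty$ combined with the multiplier-based complementarity tests of step~2.
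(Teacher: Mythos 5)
Your proof is correct and follows essentially the same route as the paper's: boundedness of $\{y^{k+1}\}_K$ from the approximate $y$-stationarity and complementarity tests of step 2 combined with the strict positivity of $(p_i^\rho)'$ beyond $s_i^\rho$ (which, as you note, is exactly what the uniqueness part of (P.1) delivers), then extraction of a convergent subsequence, then $y_i^*=0$ whenever $x_i^*>0$ by forcing $(\nu_y^{k+1})_i \to \infty$. The only cosmetic difference is that the paper runs the boundedness step as a proof by contradiction (assuming $y_i^{k+1}\to\infty$ and deriving $(\nu_y^{k+1})_i \ge c/2$), whereas you argue it directly via a case split on the min-condition; the underlying ingredients are identical.
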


\begin{proof}
We first show that the corresponding subsequence $ \{ y^{k+1} \}_K $
remains bounded. By contradiction, assume that there is an index
$ i $ such that $ \{ y_i^{k+1} \} $ is unbounded. Due to the 
nonnegativity constraint, we may therefore assume, without loss
of generality, that $ y_i^{k+1} \to_K \infty $. In particular,
we then have $y_i^{k+1} \geq 2 \cdot s_i^\rho$ for infinitely many $k
\in K $. The convexity of $ p_i^{\rho} $ then implies
$$ 
   \nabla p_i^\rho(y_i^{k+1}) \geq \nabla p_i^\rho(2s_i^\rho)=:c > \nabla p_i^\rho(s_i^\rho) = 0.
$$ 
In particular, we then have
$$ 
   \nabla p_i^\rho(y_i^{k+1}) + \alpha_k x_i^{k+1} \ge c
$$
and therefore $(\nu_y^{k+1})_i \geq c/2$ for infinitely many $k \in K$
due to the second termination check in step 2. On the other hand, by the final condition in step 2, we have
$$
  \min\{y_i^{k+1},(\nu_y^{k+1})_i\} \to 0,
$$
and this contradiction shows that $ \{ y^{k+1} \}_K $ is indeed
a bounded sequence.

Consequently, there is a subset $K'\subseteq K$ such that 
$ \{ y^{k+1} \}_{K'} $ converges to some point $ y^* $. We
claim that $ x_i^* y_i^* = 0 $ holds for this limit for all
$ i = 1, \ldots, n $. For $ x_i^* = 0 $, there is nothing to
prove. Hence consider an index $ i $ with $ x_i^* > 0 $.
Then clearly $\alpha_k x_i^{k+1} \to_K \infty$. 
Since $ p_i^{\rho} $ is convex by assumption, its derivative 
$\nabla p_i^\rho$ is monotone. Taking into account the
sign restriction $y_i^{k+1}\ge 0$, we therefore obtain
$$
   \nabla p_i^\rho(y_i^{k+1}) \ge \nabla p_i^\rho(0).
$$
This implies
$$ 
   \nabla p_i^\rho(y_i^{k+1}) + \alpha_k x_i^{k+1} \to_{K'} \infty,
$$
and the second termination check in step 2 of Algorithm~\ref{PenaltyAlg}
therefore yields
$$ 
   (\nu_y^{k+1})_i \to_{K'} \infty.
$$
Hence, the sixth condition in step 2 immediately gives $y_i^{k+1} \to_{K'} 0 = y_i^*$, and this completes the proof.
\end{proof}

The above theorem shows that every accumulation point of 
Algorithm~\ref{PenaltyAlg} is indeed (approximately) feasible for (\ref{SPOref}). Note that this is not as surprising as it seems
in the beginning. In fact, when looking at the original problem \ref{SPO}, the feasible set is given by $ X $ and depends
on the variables $ x $ alone. Moving to an auxiliary variable $y$ should not increase the difficulty to find feasible points for the reformulation. 

In general, we cannot guarantee to obtain approximate stationary points if the algorithm does not terminate after a finite number of iterations. We may, however, choose $\alpha_k$ and $\varepsilon_k$ in dependence to recover such a result.

\begin{thm}\label{thm:AS-stat-conv}
Let $\delta = 0 $, and let $ \{ (x^{k}, y^{k}) \} $ be an infinite
sequence generated by Algorithm~\ref{PenaltyAlg} such that $x^{k+1} \to_K x^*$ on a subsequence $K$. Then the following statements hold:
\begin{enumerate}
 \item[(a)] If $y_i^{k+1} \alpha_k \to_{K} 0$ for all $i \notin I_0(x^*)$, then $x^*$ is an AS-stationary point.
 \item[(b)] If $\varepsilon_k \alpha_k \to 0$, then $y^{k+1}_i \alpha_k \to 0$ for all $i \notin I_0(x^*)$.
\end{enumerate}
\end{thm}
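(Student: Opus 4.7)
For part (a), the plan is to verify each ingredient of AS-stationarity (Definition \ref{Def:AS-stationarity}) in turn. Feasibility of $x^*$ for \eqref{SPO} follows by continuity: $x^* \ge 0$ since $x^{k+1} \in \mathbb{R}_+^n$, $h(x^*) = 0$ since $\|h(x^{k+1})\| \le \varepsilon_k \to 0$, and $g(x^*) \le 0$ follows from the approximate complementarity condition in step 2 together with the nonnegativity of $\lambda^{k+1}$ and a standard argument (possibly passing to the subsequence where the appropriate case of the min occurs for each $i$). The complementarity condition $\min\{-g_i(x^{k+1}), \lambda_i^{k+1}\} \to_K 0$ is a direct consequence of step 2. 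The main task is the gradient condition: for each $i \notin I_0(x^*)$, we have $x_i^* > 0$, hence $x_i^{k+1}$ is bounded away from zero along $K$. The complementarity check $\min\{x_i^{k+1},(\nu_x^{k+1})_i\} \le \varepsilon_k$ therefore forces $(\nu_x^{k+1})_i \le \varepsilon_k \to 0$. Substituting into the first inequality of step 2, rewritten componentwise as
\begin{equation*}
  \nabla_{x_i} L^{SP}(x^{k+1}, \lambda^{k+1}, \mu^{k+1}) = r_i^{k+1} + (\nu_x^{k+1})_i - \alpha_k y_i^{k+1}
\end{equation*}
with $\|r^{k+1}\| \le \varepsilon_k$, the assumption $\alpha_k y_i^{k+1} \to_K 0$ yields $\nabla_{x_i} L^{SP}(x^{k+1}, \lambda^{k+1}, \mu^{k+1}) \to_K 0$, which is exactly what AS-stationarity requires for $i \notin I_0(x^*)$.

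For part (b), the key observation to extract first is that, since $\delta = 0$ and the sequence is infinite, the update rule in step 3 gives $\alpha_k = \alpha_0 \beta^k \to \infty$. Fix $i \notin I_0(x^*)$, so $x_i^* > 0$. The termination-style test in step 2 for the $y$-variables guarantees $\min\{y_i^{k+1},(\nu_y^{k+1})_i\} \le \varepsilon_k$, which motivates a case distinction. In the ``good'' case $y_i^{k+1} \le \varepsilon_k$, the conclusion $\alpha_k y_i^{k+1} \le \varepsilon_k \alpha_k \to 0$ is immediate from the hypothesis. The real work lies in ruling out the other case $(\nu_y^{k+1})_i \le \varepsilon_k$ for all large $k \in K$.

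To exclude that case, the plan is to combine the second inequality of step 2 with the convexity of $p_i^\rho$. Componentwise, step 2 yields
\begin{equation*}
  \nabla p_i^\rho(y_i^{k+1}) + \alpha_k x_i^{k+1} = \tilde{r}_i^{k+1} + (\nu_y^{k+1})_i
\end{equation*}
with $|\tilde{r}_i^{k+1}| \le \varepsilon_k$. Since $y_i^{k+1} \ge 0$ and $\nabla p_i^\rho$ is monotone increasing (because $p_i^\rho$ is convex with unique minimum at $s_i^\rho > 0$), we have $\nabla p_i^\rho(y_i^{k+1}) \ge \nabla p_i^\rho(0)$, a fixed finite lower bound. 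In the second case, the right-hand side is bounded above by $2\varepsilon_k$, so we would obtain $\alpha_k x_i^{k+1} \le 2\varepsilon_k - \nabla p_i^\rho(0)$, which is bounded. But $x_i^{k+1} \to_K x_i^* > 0$ and $\alpha_k \to \infty$ force $\alpha_k x_i^{k+1} \to \infty$, a contradiction. Hence for $k$ sufficiently large in $K$ only the first case can occur, and the claim follows.

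The main obstacle is this exclusion step in (b); everything else is bookkeeping through the approximate-KKT inequalities of step 2. The role of the hypothesis $\varepsilon_k \alpha_k \to 0$ is solely to convert ``$y_i^{k+1}$ small'' into the desired ``$\alpha_k y_i^{k+1}$ small,'' which is why the weaker assumption in (a) suffices once the product itself is controlled along the subsequence.
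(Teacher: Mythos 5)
Your proposal is correct and follows essentially the same route as the paper: part (a) combines the fifth and first tests of step 2 with the hypothesis $\alpha_k y_i^{k+1}\to_K 0$, and part (b) uses $\alpha_k x_i^{k+1}\to_K\infty$ together with the lower bound $\nabla p_i^\rho(y_i^{k+1})\ge\nabla p_i^\rho(0)$ and the sixth test. The only cosmetic difference is that you organize (b) as a case distinction ruling out $(\nu_y^{k+1})_i\le\varepsilon_k$, whereas the paper argues directly that $(\nu_y^{k+1})_i\to_K\infty$ and then lets the min condition, multiplied by $\alpha_k$, force $\alpha_k y_i^{k+1}\to 0$; the two arguments are logically equivalent.
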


\begin{proof}
(a) First recall that $ \{ x^{k+1} \}_K \to x^* $
by assumption, and that $ \min \big\{ - g_i (x^{k+1}), \lambda_i^{k+1})\} \to_K 0 $ follows from the third test
in Algorithm~\ref{PenaltyAlg}. Hence, it remains to show that 
\begin{equation}\label{Eq:AS-Stat-holds}
	\nabla_{x_i} L^{SP} (x^{k+1}, \lambda^{k+1}, \mu^{k+1}) \to_K 0
	\quad (i \notin I_0 (x^*))
\end{equation}
holds. Therefore, consider an arbitrary index $ i \notin I_0(x^*) $,
so that $ x_i^* > 0 $. Then it follows from the fifth
test in step 2 that $ \{ (\nu_x^{k+1})_i \} \to_K 0 $.
Together with the assumption $ y_i^{k+1} \alpha_k \to_{K} 0$, we
see that \eqref{Eq:AS-Stat-holds} follows from the first test
in step 2. \medskip

\noindent
(b) Consider an index $ i \notin I_0(x^*) $, so that $ x_i^* > 0 $.
We then have $ \alpha_k x_i^{k+1} \to_K \infty $. Then the second
condition in step 2 of Algorithm~\ref{PenaltyAlg} implies $(\nu_y^{k+1})_i \to_K \infty$ since $\nabla p^\rho_i$ takes its
(finite) minimum at $0$. Multiplying condition 6 by $\alpha_k$ yields
$$ 
   \min \{ \alpha_k y_i^{k+1}, \underbrace{\alpha_k (\nu_y^{k+1})_i}_{\to \infty} \} \leq \alpha_k \varepsilon_k \to 0
$$
and hence $\alpha_k y_i^{k+1} \to 0$. This completes the proof.
\end{proof}

Statement (a) of Theorem~\ref{thm:AS-stat-conv} provides a condition
under which an arbitrary limit point of the sequence $ \{ x^k \} $
is an AS-stationary point and, hence, an S-stationary point under
any of the SP-type constraint qualifications discussed in 
Section~\ref{Sec:AS-Stationarity}. Statement (b) then gives a
sufficient condition under which the assumption from part (a)
holds. Note that this sufficient condition can be realized. 
In fact, in iteration $ k $, we have a penalty parameter
$ \alpha_k $, and then choose a termination parameter $ \varepsilon_k $
such that $ \varepsilon_k = o (1 / \alpha_k) $ holds. From 
a practical point of view, however, this means that we might have to 
choose $ \varepsilon_k $ small, possibly even at a relatively early
stage of the entire method, hence it is not clear whether such
a choice is always desirable.

\section{Numerical Experiments}\label{Sec:Numerics}

The aim of this section is to present a variety of
applications where our exact penalty approach can be applied to.
We recall that all these applications are extremely difficult
due to the $ \ell_0 $-term (in the original formulation of
the sparse optimization problem), nevertheless, the numerical
results indicate that we find very good candidates for a solution
of the underlying problem, quite often even the global minimum.

\subsection{Sparse Portfolio Optimization}

\subsubsection{Preliminaries}

The sparse portfolio optimization can be stated in the form
\begin{equation} 
	\min_x\ x^TQx + \rho \norm{x}_0 \quad \text{s.t.} \quad e^Tx = 1, \ \mu^Tx \ge s, \ 0 \leq x \leq u \label{SPPF},
\end{equation}
with a positive (semi-) definite covariance matrix $Q$,
$ \mu \in \mathbb{R}^n $ the mean of $ n $ possible assets,
$ s > 0 $ the minimum amount of (expected) return, $e=(1,1,...,1)^T$,
and $ u_i $ an upper bound for the variable $ x_i $ which 
represents the percentage of our total investment into asset $ i $.
Hence, the economic interpretation of the portfolio model
\eqref{SPPF} is, basically, as follows: The customer is willing to spend a certain amount of money in a few (due to the 
$ \ell_0 $-term) possible assets in such a way that he minimizes the risk (represented by the objective function) and has at least 
a minimum return. Note that adding an $ \ell_1 $-term instead of
an $ \ell_0 $-term does not yield any sparsity due to the constraints
of this problem.

Note that \eqref{SPPF} can also be written as a mixed integer quadratic program. In fact, using an auxiliary variable $z$, 
problem \eqref{SPPF} is equivalent (in terms of global solutions) to
\begin{equation}\label{MIPSPPF} 
	\min_{(x,z)} \ x^T Q x + \rho e^Tz, \quad \text{s.t.} \quad e^Tx = 1, \ \mu^Tx \ge s, \ u \circ z \ge x \ge 0, \ z \in \{0,1\}^n,
\end{equation}
cf.\ \cite{Bienstock1996}. Since there exists commercially available software to tackle these types of problems like CPLEX or Gurobi, it is,
in principle, possible to find the global minimum. This, in turn,
allows to compare the quality of solutions obtained by our
exact penalty technique.

It is useful to point out that the constraints in (\ref{SPPF}) are all polyhedral. This directly implies for the SP-RCPLD and therefore AS-regularity to hold. 

Application of our technique yields the 
corresponding penalized problem \eqref{Psq}
\begin{equation}\label{PsqSPPF}
        \min_{x,y} \ x^TQx + p^\rho(y) + \alpha x^Ty \quad  \text{s.t.}
        \quad e^Tx = 1, \ \mu^Tx \ge s, \  0 \le x \le u, \ 0 \le y.
\end{equation}
If we follow Example~\ref{Ex:p-rho} (b) and 
choose $p^\rho (y):= 1/2 \norm{y - \sqrt{2\rho}e}_2^2$, 
we may rewrite \eqref{PsqSPPF} in the form
\begin{align}\label{PortfolioQuadProg}
        \min_{x,y} \ \begin{pmatrix}x \\ y\end{pmatrix}^T\hat{Q}\begin{pmatrix}x \\ y\end{pmatrix} - \sqrt{2\rho} e^Ty \quad \text{s.t.} \quad e^Tx = 1, \ \mu^T x \ge s, \ 0\le x \le u, \ 0 \le y,
\end{align}
where
$$ 
    \hat{Q} = \frac{1}{2} \begin{pmatrix}
    2Q & \alpha I \\ \alpha I & I
    \end{pmatrix}.
$$
Note that this is a quadratic program with a positive definite Matrix $\hat{Q}$ as long as $\alpha <\sqrt{2 \lambda_{\min}}$, where 
$ \lambda_{\min} := \min \{ \lambda \in \sigma(Q)\} $ denotes the
minimum eigenvalue of the matrix $ Q $. Hence, for this choice
of $\alpha$, solutions of \eqref{PsqSPPF} are unique and easy
to compute, but not necessarily feasible for \eqref{SPOref}. 
However, this motivates an initial choice $\alpha_0 = \sqrt{2 \lambda_{\min}} \cdot c$, $c \in (0,1)$ for the penalty parameter. 
In our set of test instances, all $\lambda_{\min}$ happened to be strictly positive. We further stress that, in case of a positive definite matrix $Q$, it is easy to see from equation \eqref{eqn:PSQSOSC2} that, at a feasible point $(x^*,y^*)$, SOSC is satisfied for \eqref{SPOref}.

\subsubsection{Numerical Test}

For our numerical tests, we have chosen an instance of 
problems provided by Frangioni and Gentile\footnote{\url{http://groups.di.unipi.it/optimize/Data/MV.html}},
with the constraints
$
   (1 - y_i) l_i \le x_i \le (1-y_i) u_i,
$
corresponding to $x_i = 0$ or $x_i \in [l_i, u_i]$, relaxed to
$$ 
   0 \le x_i \le (1-y_i) u_i.
$$
We first applied the branch-and-bound type algorithm by Gurobi\footnote{\url{https://www.gurobi.com/solutions/gurobi-optimizer/}} to the mixed-integer reformulation \eqref{MIPSPPF} to approximate 
a global minimum. In accordance to the previous subsection, we then
took $p^\rho(y) = 1/2 \norm{y - \sqrt{2 p}e}_2^2$, $\rho = 1$, and computed via Python $\lambda_{\min}$ and $x^0$ as solution to the quadratic programm \eqref{PortfolioQuadProg} for an $\alpha$ just below $\sqrt{2\lambda_{\min}}$ with a call to the 
corresponding Gurobi-module. 
This process took up 21 seconds of CPU-time for all of the 90 test instances.
\begin{figure}[H]
    \centering
    \begin{subfigure}{\linewidth}
    \centering
    \includegraphics[width =\linewidth]{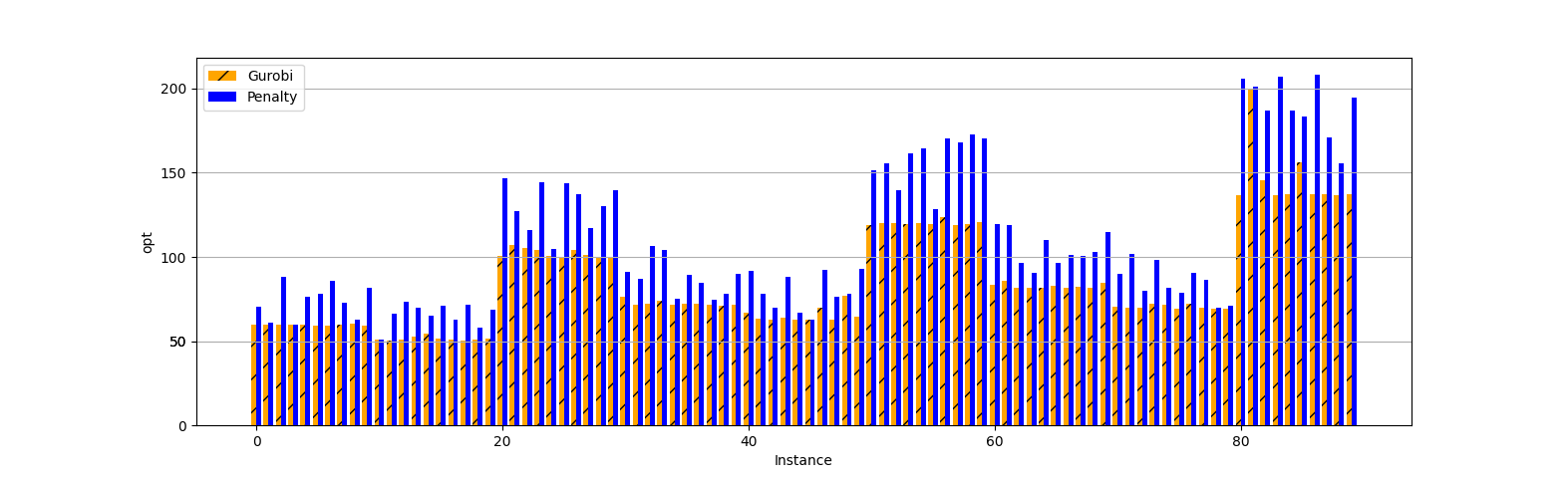}
    \subcaption{Portfolio results by Gurobi and the penalty approach with a Huber-type $p^\rho$ and $\beta=1.1$}
    \label{fig:GurobiVsPenaltyHuber}
    \end{subfigure}
    \begin{subfigure}{\linewidth}
    \centering
    \includegraphics[width = \linewidth]{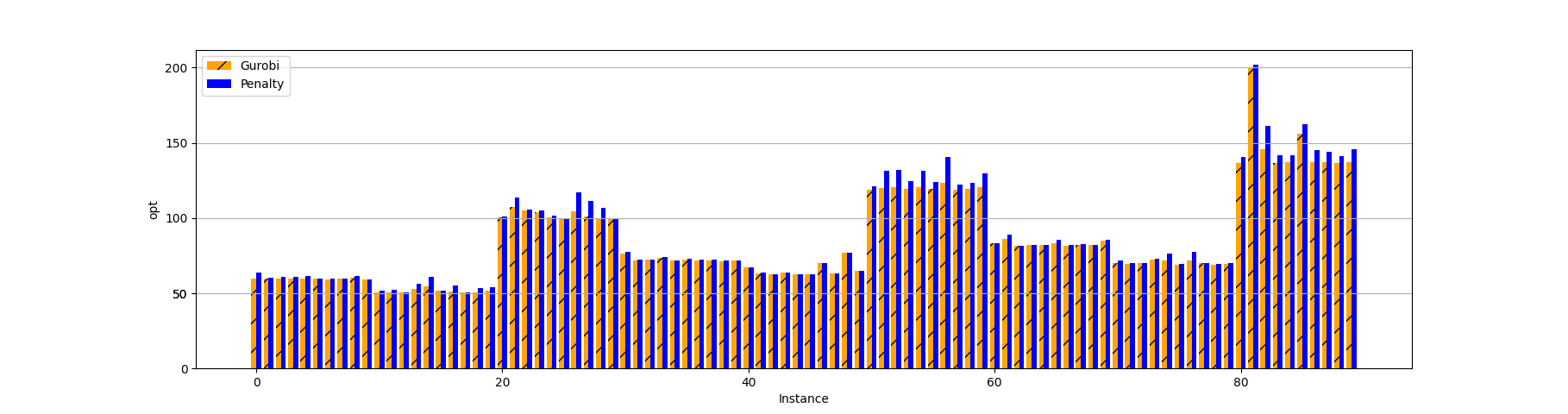}
    \subcaption{Portfolio results by Gurobi and the penalty approach with a smooth $p^\rho$ and $\beta = 5$.}
    \label{fig:GurobiVsPenalty(b)}
    \end{subfigure}
        \begin{subfigure}{\linewidth}
    \centering
    \includegraphics[width = \linewidth]{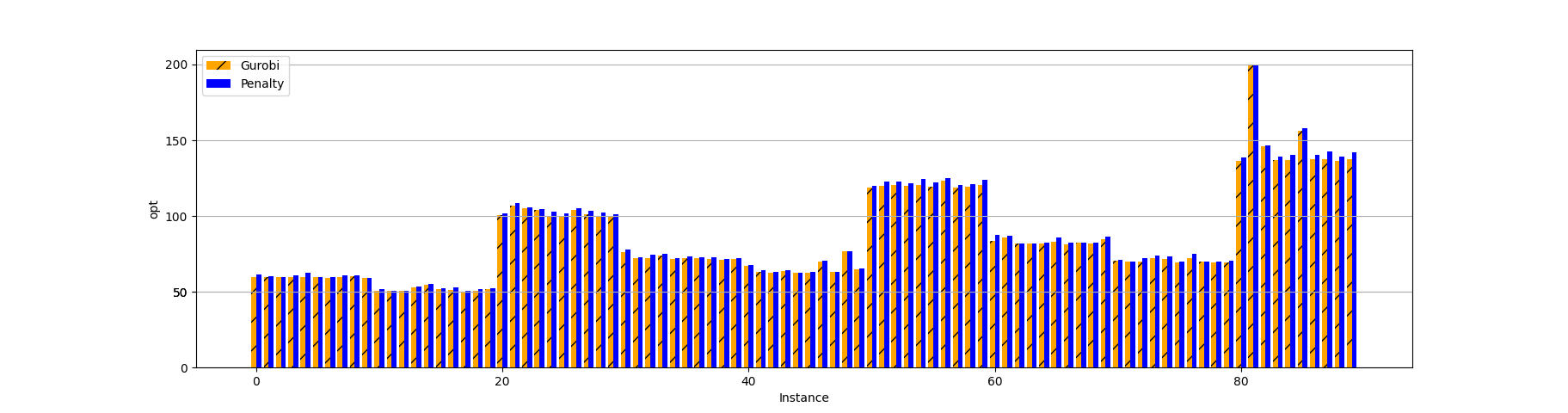}
    \subcaption{Portfolio results by Gurobi and the penalty approach with a smooth $p^\rho$ and $\beta = 2$.}
    \label{fig:GurobiVsPenalty(c)}
    \end{subfigure}
        \begin{subfigure}{\linewidth}
    \centering
    \includegraphics[width = \linewidth]{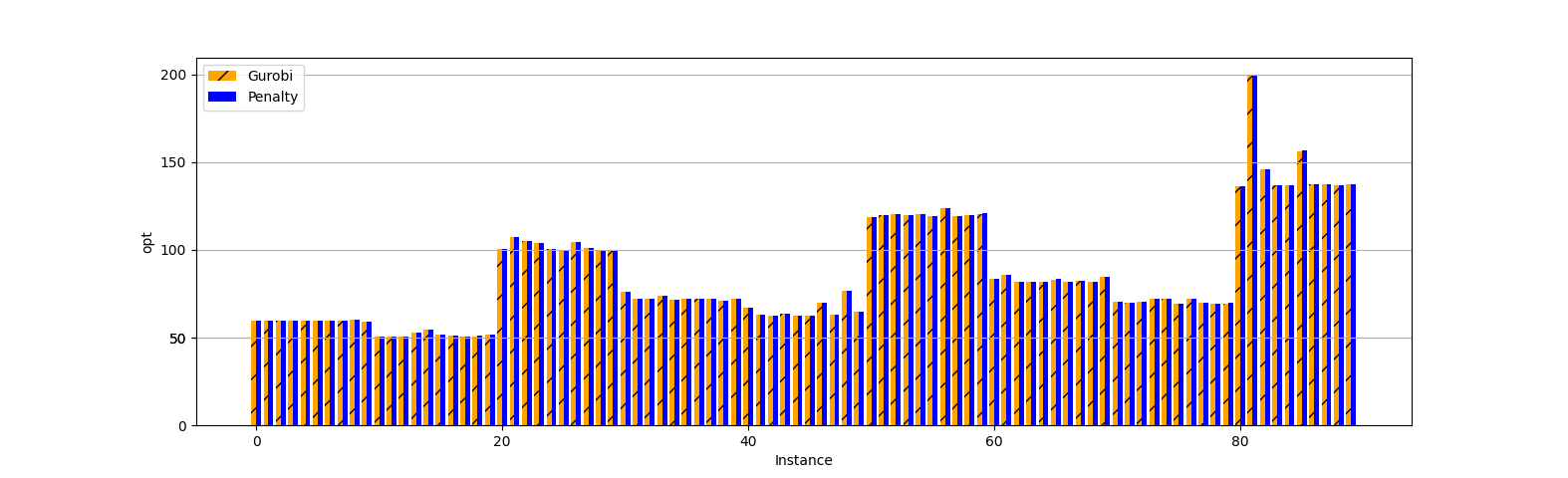}
    \subcaption{Portfolio results by Gurobi and the penalty approach with a smooth $p^\rho$ and $\beta = 1.1$.}
    \label{fig:GurobiVsPenalty(d)}
    \end{subfigure}
    \caption{Overview of the sparse portfolio tests.}
\end{figure}

Afterwards we used our exact penalty approach by computing a solution of \eqref{PsqSPPF} via Algencan under Fortran, with the initial choice  of $x^0$ as described above, and 
$$
   y^0 := \sqrt{2\lambda_{\min}},\quad  
   \alpha_0 := \sqrt{2\lambda_{min}} / 0.95, \quad \beta \in \{1.1, 2, 5\}.
$$
Under Algencan, the subproblems in step two of 
Algorithm~\ref{PenaltyAlg} were solved to an accuracy of $10^{-6}$ with specifically a tolerance of $10^{-8}$ in  feasibility.  The execution was halted once the solution $(x,y)$ provided by Algencan would fulfill the condition $$ x^Ty \le 10^{-6}.$$

With decreasing choices of $\beta$ we are able to recover the global solution given by Gurobi (compare plots in Figures~\ref{fig:GurobiVsPenalty(b)} - \ref{fig:GurobiVsPenalty(d)}). In fact, the results are already
very promising for the penalty updating factors $ \beta = 10 $ and,
especially,
$ \beta = 5 $, and for $ \beta = 1.1 $, the optimal function 
values computed by our exact penalty scheme coincides with the 
optimal function values provided by Gurobi for \emph{all} 90
instances. Regarding the CPU-time for $\beta = 1.1$,  the total time
for the computation of the 30 instances with dimension 200 was around 4.1 seconds, for dimension 300 at 17.5 seconds, and for 
dimension 400 at 25 seconds.

In a second run, we replaced $p^\rho(y)$ via a piecewise Huber-type function in the sense that
$$ p^\rho_i(y) = \xi \cdot \begin{cases}
    \varepsilon(y - \sqrt{2\rho} - \varepsilon) + \frac{1}{2}\varepsilon^2, \quad &y > \rho + \varepsilon,\\
    \frac{1}{2} (y_i - \sqrt{2\rho})^2, \quad &y \in [\rho - \varepsilon, \rho + \varepsilon], \\
    -\varepsilon(y - \sqrt{2\rho} + \varepsilon) + \frac{1}{2}\varepsilon^2, \quad &y < \rho -
    \varepsilon,
\end{cases} \qquad \xi = \frac{\rho}{\varepsilon\sqrt{2\rho} - \frac{1}{2}\varepsilon^2}$$
and set $\rho = 1$, $\varepsilon = 0.1$. 
The call to Algencan yields the results in 
Figure~\ref{fig:GurobiVsPenaltyHuber}.

As we can see, we did overall not recover the global solution in this case, however the computation was in fact shortened to 2.76 seconds for dimension 200, 7.95 seconds for dimension 300 and 12.73 seconds for dimension 400.

\subsection{Sign-constrained Basis Pursuit}

\subsubsection{Preliminaries}

In general, the aim is to find a sparse vector $x$ that approximately satisfies
$$ Ax \approx b.$$
In problem settings as, for instance, mass spectromety as described in \cite{vandenBerg2011}, it is also necessary to introduce the sign constraints $x \ge 0$. We therefore arrive at the formulation
\begin{equation}\label{SCBP}
    \min_x \ \norm{x}_0 \quad \text{s.t.} \quad \norm{Ax - b}_2^2 \le \varepsilon, \ x \ge 0,
\end{equation}
to which the penalty formulation with the choice of $p^\rho(y) = 1/2 \norm{y - \sqrt{2\rho}e}_2^2$ is given by
\begin{equation}\label{PsqSCBP}
    \min_x \ p^{\rho}(y) + \alpha x^Ty \quad \text{s.t.} \quad \norm{Ax - b}_2^2 \le \varepsilon, \ x \ge 0, \ y \ge 0.
\end{equation}
Observe that \eqref{SCBP} is a sparse optimization problem of the form $f(x) + \norm{x}_0$ with $f \equiv 0$. These kind of problems
are particularly challenging since a simple inspection shows that every
feasible point is already a local minimum, which, of course, is also reflected by our stationarity conditions. Nevertheless, we hope that an accumulation point $(x^*,y^*)$ of a sequence $(x^k,y^k)$ produced by the exact penalty approach is feasible and has a convincing sparsity pattern. 

We will construct these problems by choosing a matrix $A$ and a suitable sparse vector $x^0 \ge 0$ such that
$$ 
   b = Ax^0 + r, \quad \varepsilon = \norm{r}_2^2 \cdot (1 + \delta)
$$
with a random vector $r$ and a small $\delta > 0$. By continuity, the interior of the set given by the constraints in \eqref{PsqSCBP} is nonempty. Hence, the Slater-CQ is fulfilled and the penalized formulation always admits Lagrangian multipliers. Furthermore, let $x^*$ be given such that $I_0(x^0) = I_0(x^*)$. Consider then the TNLP for problem \eqref{SCBP} around a feasible point $x^*$:
\begin{align*}
\min_x \ 0 \quad
\text{s.t.} \quad \norm{Ax - b}_2^2 \le \varepsilon, \
x_i = 0, \ i\in I_0(x^*).
\end{align*}
Clearly, the point $x^0$ is also feasible for the above problem and, furthermore, strictly satisfies the inequality constraints. Hence, the Slater-CQ is also satisfied for the TNLP and as an inference $x^*$ is an S-stationary point.

\subsubsection{Numerical Tests}

We tested 200 instances in which we initialized $A$ as a random $\{0,...,99\}^{128 \times 512}$ matrix and chose an original signal $\overline{x}$, with $\overline{x}_i$ identically distributed on the interval $[0,1]$. Afterwards, a support of size $16$ was taken at random, so that
$$ 
   \norm{\overline{x}}_0 = 16.
$$
The vector $b:= A \overline{x}$ was distorted by a Gaussian noise $r \in \RR^{128}$ of mean $0$ and variance $0.5$. We chose $\varepsilon$ such that
$$ 
   \varepsilon > 1.1 \cdot \max_{i = 1,...,200}(0.5 \cdot \norm{r_i}_2)^2,
$$
where $r_i$ denotes the error vector for instance $i$. Furthermore, $x^0$ was initialized as the zero vector, $\rho = 1$, $y^0 = e$, $\alpha^0 = 1.0$ and $\beta = 1.1$. Stopping parameters for Algencan where chosen as before, with the tolerance for approximate stationarity of $10^{-6}$ and for feasibility of $10^{-8}.$
The corresponding numerical results are presented in 
Figure~\ref{fig:CSResult}. Note, in particular, that the
sparsity level generated by our method is, for all instances,
at least as good as the initial guess $ \bar{x} $, and even better
for a number of test problems.

\begin{figure}[ht]
    \centering
    \includegraphics[width = .5\textwidth, height = 6cm]{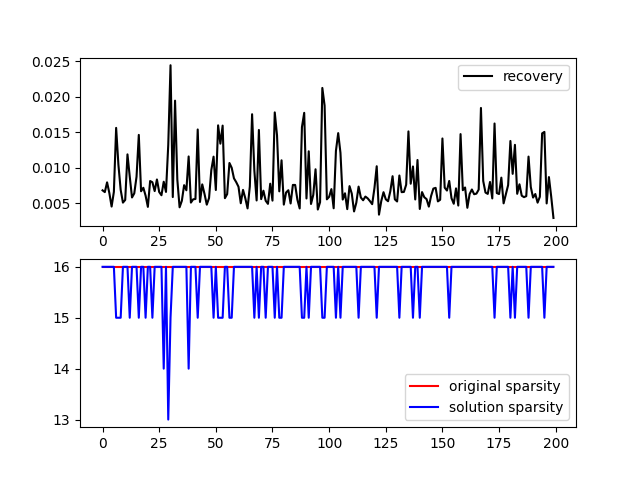}
    \caption{Measurement of the recovery $\norm{\overline{x} - x^s}$ and comparison of the sparsity between the original vector $\overline{x}$ and sparsity of the solution $x^s$.}
    \label{fig:CSResult}
\end{figure}

\subsection{Logistic Regression}

\subsubsection{Problem Definition}

Assume one is interested to train a decision-making algorithm based on probabilities, where we have $m$ data points $(z_i,t_i)$, with $z_i \in \RR^n, \ t_i \in \{1,-1\}^n$, and are looking for a model $p$ with parameters $a = (a_1,...,a_n)$ such that
$$ 
   p(a; z_i) \approx t_i \quad \forall i = 1, \ldots, m.
$$
As a common approach, one chooses the sigmoid function
$$ 
   p(a; z_i) = \frac{1}{1 + \exp(-a^T z_i)}
$$
in order to find a suitable and sparse parameter vector $a$ by
solving the optimization problem
$$ 
   \min_a \frac{1}{m} \sum_{i = 1}^m \log(1 + \exp(-y_iz_i^Ta)) + \rho \norm{a}_0 \st -r \le a \le r
$$
for some large $r$ to guarantee the solvability of the problem.
Note that, here we are lacking the sign constraints on $a$. 
To deal with this problem, we separate $a$ into its positive and negative parts in the sense that
$$ 
   a = a^+ - a^-, \quad a^+ \ge 0, \ a^- \ge 0,
$$
so that our newly found optimization problem is of the form
$$ 
   \min_{a^+, \ a^-} \ f(a^+ - a^-) + \rho \norm{(a^+,a^-)}_0, 
   \quad r\ge a^+ \ge 0, \ r\ge a^- \ge 0.
$$
In general, this approach comes with some drawbacks as the split is not unique and increases the number of local minima. Consider for instance the problem
$$ 
   \min_x \ x^2 + \norm{x}_0, \quad x \in \RR.
$$
Then, clearly, $ x^* = 0 $ is the only local and global minimum.
If, however, we introduce the split, we obtain the formulation
$$ 
   \min_{x^+,\ x^-} \ (x^+ - x^-)^2 + \norm{(x^+,x^-)}_0, \quad x^+ \ge 0, \ x^- \ge 0,
$$
with $(x^+,x^-) = (0,0)$ still being the unique global minimum, but
with each $(x^+,x^-) = \lambda \cdot (1,1)$ being a local minimum for each $\lambda \in \RR$.

Unfortunately, this is not the only problem as we naturally
increase the number of variables and, thus, also the required computational power. Nevertheless, the following subsection shows that this approach still works quite well in practice.

\subsubsection{Numerical Tests}

Since there were no constraints in place, computation where carried out via an implementation of a spectral gradient type method 
\cite{BirginMartinezRaydan2000} under Python.
We first tested our approach with the widely known colon-cancer data set\footnote{\url{https://www.csie.ntu.edu.tw/~cjlin/libsvmtools/datasets/binary.html}} with 2000 features and 62 data samples. We initiated a training set by choosing 42 data samples, consisting of 14 positive and 28 negative labels. As starting parameters we chose the penalty function $p_i^\rho(y_i) = 1/2 (y_i - \sqrt{2\rho})^2$ and set $(x^0, y^0) = (0, \sqrt{2\rho} e)$ as the initial guess, where $\rho = 0.1 \cdot \frac{1}{m}$. Furthermore, we set $\alpha_0 = 0.1, \ \beta = 10$. The spectral gradient step was executed for $10^{4}$ iterations or to a precision of $10^{-5}$, where we accepted the end result once complementarity between $x^k$ and $y^k$ was reached to a precision of $10^{-6}$. The accuracy measured to around 75\% as integral of the ROC-curve, where $\norm{x}_0 = 7$ from $2000$ possible entries, whereas cpu time accrued to 2.35 seconds. In fact, our solution vector predicted $0$ and $1$ label to machine precision so that no matter a given threshold we would always correctly guess exactly $15$ out of $20$ possible instances in the validation set.

Second we chose as test example the gisette data set from the NIPS 2003 challenge\footnote{\url{https://archive.ics.uci.edu/}}. The gisette data sets comes with specific training and validation data. Again, the initial guess was made with $(x^0,y^0) = (0,\sqrt{2\rho} e)$, where we used the same penalty function as before and set $\rho = 1/m$. The remaining parameters where chosen as $\alpha_0 = 1$, $\beta = 10$. The spectral gradient step was executed for $10^4$ iterations or to a precision of $10^{-2}$, where we accepted $10^{-2}$ as tolerance for the complementarity constraints. The accuracy measured to around 99.45 \% as integral of the ROC-curve, where $\norm{x}_0 = 551$ of $5000$ possible entries, whereas cpu time accrued to 2.75 minutes. If we accept $0.5$ as the threshold to which we predict a $1$ or $0$ if a value larger or less than $0.5$ was observed, we would correctly guess in 97.2\% of instances in the validation set.  
\begin{figure}
    \centering
    \includegraphics[width = .5\textwidth, height = 6cm]{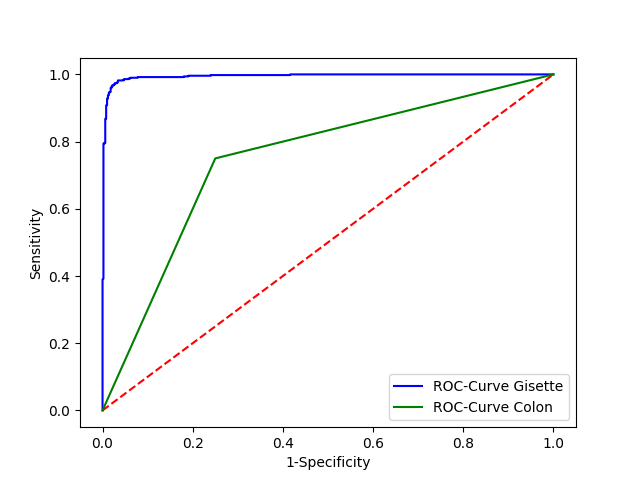}
    \caption{ROC-Curve for the gisette and the colon data set.}
    \label{fig:ROCCurve}
\end{figure}

\subsection{Support Vector Machines}

\subsubsection{Problem Definition}

The support vector machine problem may be stated as
$$ 
   \min_{c,\gamma} \frac{1}{2m} \norm{c}^2_2 + \rho \norm{\max\{0, e - z \circ (A c - \gamma)\}}_0
$$
with some matrix $A \in \RR^{m \times n}$, and a $z \in \{-1,1\}^m$ signaling that the $i$-th sample $a_i^T$ given as $i$-th row of $A$ belongs to the class $z_i$. By introducing an auxiliary variable $u$, we may rewrite this problem into
\begin{equation} \min_{c,\gamma,u} \frac{1}{2m} \norm{c}^2_2 + \rho \norm{u}_0, \quad u \ge 0, \ u \ge e-z\circ(Ac-\gamma). \end{equation}
The corresponding penalized problem is given by
\begin{equation}
    \min_{c,\gamma,u} \frac{1}{2m} \norm{c}^2_2 + p^{\rho}(y) + \alpha y^T u, \quad \text{s.t.} \quad u \ge 0,\ u \ge e - z \circ (Ac - \gamma).
\end{equation}
These subproblems are then solved by an augmented Lagrangian 
approach.

\subsubsection{Numerical Tests}

We applied our method to a few selected datasets from the source\footnote{\url{https://www.csie.ntu.edu.tw/~cjlin/libsvmtools/datasets/binary.html}}. The choice of the penalty function was again $p_i^\rho(y_i) = 1/2 (y_i - \sqrt{2 \rho})^2$. The additional scaling by factor $1/m$ in the target function was introduced to avoid large values during the gradient method which occured with large sample-size. We met the matching choice of $\rho = 1/m, \ \alpha_0 = 1/m$ and set $\beta = 10$. The value of $\delta$ in \ref{PenaltyAlg} was reduced to $10^{-2}$. We compared our results (denoted by 'pnl') to the libsvm solver available as python module\footnote{\url{https://pypi.org/project/libsvm/}}. The table \ref{table:SVM} suggests that our penalty approach is of particular interest once the size of features and training variables becomes exceedingly large.
\begin{table}
\begin{tabular}{ |c|c|c|c|c|c|c|c| } 
 \hline
 Dataset & features & train & test & acc-pnl & acc-libsvm & cpu-pnl & cpu-libsvm \\
 \hline
 \texttt{arcene} & 10000 & 100 & 100 &82\% & 83\% & 10.5 sec & 0.4 sec\\
 \texttt{jcnn} & 22 & 49990 & 91701 & 91.8 \% & 92.1 \% & 105.7 sec & 11.1 sec\\
 \texttt{a9a} & 123 & 23373 & 8141 & 84.9\% & 85\% & 145sec & 15 sec \\
 \texttt{binary} & 47236 & 20242 & 677399 & 96.3\% & 96.3 \% & 14.7 sec & 90 sec \\
 \texttt{kddb} & 1129522 & 19264097 & 748401 & 94.4\% & - & 161.8 sec & -\\
 \hline
\end{tabular}
 \caption{ \label{table:SVM}
Accuracy and CPU-time for the SVM tests.}
\end{table}
In the last case, the call to the \texttt{svm\_train} function within the libsvm package did not yield any result.

\subsection{Dictionary Learning}

\subsubsection{Problem Definition}

The dictionary learning problem can be understood as an extension to the basis pursuit denoising type of problem, where also the basis is searched for. Let $Z \in \RR^{n \times m}$ be given. We look for $ D\in \mathbb{R}^{l \times n}, \ C \in \mathbb{R}^{l \times m}$ which minimize
$$ 
   \min_{D,C} \ \frac{1}{2} \norm{Z - D^T C}_F^2 + \rho \norm{C}_0, \quad \text{s.t.} \quad \norm{D^T_j}_2^2 \le 1 \quad \forall j = {1,...,l},
$$
where $D^T_j$ denote the rows in $D$, $\norm{\cdot}_F$ is the Frobenius norm and we define
$$ \norm{C}_0 := \sum_{i,j = 1}^{n,m} \norm{C_{i,j}}_0.$$ As with the logistic regression example seen before, there are no sign constraints with $C$. We again pass to the shift
$$ C = C_+ - C_-, \quad C_+ \ge0, \ C_- \ge0.$$
Note that there are no constraints with respect to $C$, the variable we want to be sparse. Furthermore, the feasible set $X$ is, in particular, closed and convex and as such has a unique projection, which is, in this case, also easy to compute. 
Let $$ F(C_+,C_-,D) = \frac{1}{2} \norm{Z - D^T (C_+ - C_-)}_F^2.$$
We therefore require in step 2 of algorithm \ref{PenaltyAlg}
$$ \norm{P_X(D^{k+1} - \nabla_D F(C_+^{k+1},C_-^{k+1},D^{k+1})) - D^{k+1}} \le \varepsilon_k.$$
By passing to the limit every accumulation point is already stationary with respect to component $D$. It seems natural to simply apply a projected spectral gradient method to this type of problem. Notice 
that, in particular, the derivative with respect to $C$ and $D$ is given by
$$ \nabla_{C_{\pm}} F(C_+,C_-,D) = \pm \left(DD^T C - DZ\right), \quad \nabla_D F(C_+,C_-,D) = CC^T D - CZ^T.$$
For the choice $(C_+,C_-,D) = (0,0,0)$ the above expressions both vanish and we clearly have an S-stationary point. We therefore initalized $(C_+^0,C_-^0,D^0)$ as a random matrix $\RR^{l \times 2m + n}$ with entries taken from a standard normal distribution, projected onto $X$.

\subsubsection{Numerical Tests}

We conducted our numerical tests similar as in \cite{DeMarchi2023}. In 100 instances with $n = 10,\ l=20, \ m=30$ we generated $Z = C^T D$ from primary matrices $C$ and $D$, where $C$ only had three nonzero entries at random positions per column, where the values where taken from a standard normal distribution and where $D$ was chosen as a random standard normal matrix with normalized rows. We again chose the penalty function $p_i^\rho(y_i) = 1/2 (y_i - \sqrt{2\rho})^2$ and set $\rho = 0.1$, $\alpha_0 = 0.1$, $\beta = 10$. The spectral gradient
method was run for $10^{4}$ iterations with a tolerance of $10^{-3}$. Complementarity between $C_\pm$ and $Y_\pm$ was accepted to a tolerance of $10^{-6}$. We detail the achieved values as well as the required cpu time in the chart (\ref{fig:LearningDict}). We measure the results against a proximal gradient type method applied to the same test set (\ref{fig:LearningDictProx}) (note the different scaling of the axes
regarding the objective function values). The proximal gradient method was mirrored from aformentioned source \cite{DeMarchi2023} using the averaging stepsize as detailed there. While computation time with the proximal gradient is lower compared to our exact penalty approach, we are able to significantly improve upon the reached target value. 
\begin{figure}[ht]
    \begin{subfigure}{.49\textwidth}
    \includegraphics[width = .8\linewidth]{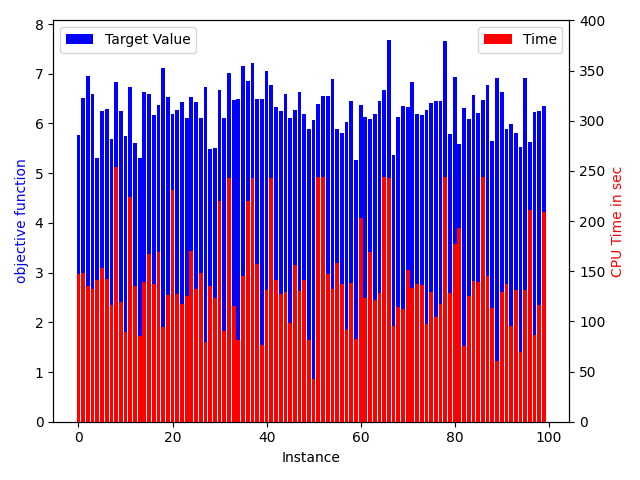}
    \caption{Computation time and achieved target \\ value via the exact penalty approach.}
    \label{fig:LearningDict}
    \end{subfigure}
    \begin{subfigure}{.49\textwidth}
    \includegraphics[width = .8\linewidth]{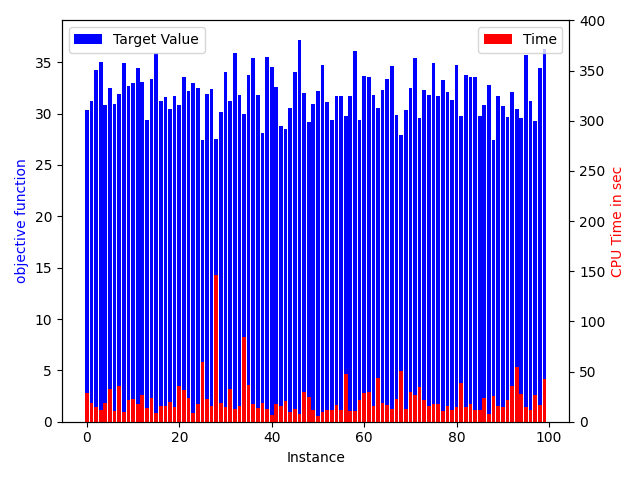}
    \caption{Computation time and achieved target \\ value via a proximal gradient type method.}
    \label{fig:LearningDictProx}
    \end{subfigure}
    \caption{A comparison of the penalty and proixmal gradient method applied to the Dictionary Learning problem}
\end{figure}

The second test regarding dictionary-learning-type problems was done with the MNIST\footnote{Y. LeCun and C. Cortes. Mnist handwritten digit database. AT\&T Labs [Online]. Available: \url{https://yann.lecun.com/exdb/mnist}, 2010} data set. We have chosen the first 100 images and used and tried to find a sparse representation $Y_i \approx C^T_i D_i$ for each of the images $Y_i$. As $Y_i$ was represented by a 28 by 28 matrix, we let $C_i,\ D_i \in \RR^{l \times 28}$ for $l = 6,8,10,12,14$ and survey the interesting characteristics in the Table~\ref{tab:DictL} as an average over the $100$ test runs. To get an idea for the quality of the achieved decomposition, we compare for $i = 1,...,18$ the original image $Y_i$ to the result $C_i^T D_i$ specifically for dimension $l = 10$ in figure (\ref{fig:MINTDecomp}). 
\begin{center}
\begin{figure}[ht]
\begin{subfigure}[h]{.49\textwidth}
\begin{tabular}{ |c|c|c|c| }
 \hline
 dim & $f + .1 \norm{\cdot}_0$ & $\norm{\cdot}_0$ & time\\
 \hline
 $l = 6$ & 1.2 & 23.3 & 16.7 \\
 $l = 8$ & 0.6 & 21.8 & 16.3\\
 $l = 10$ & 0.3 & 19.9 & 14.3\\
 $l = 12$ & 0.2 & 17.6 & 12.1\\
 $l = 14$ & 0.09 & 16.3 & 10\\
 \hline
\end{tabular}
\caption{Comparison of the MNIST Dataset \\ for $100$ instances.}
\label{tab:DictL}
\end{subfigure}
\begin{subfigure}[h]{.49\textwidth}
        \includegraphics[width = \textwidth, height = 6cm]{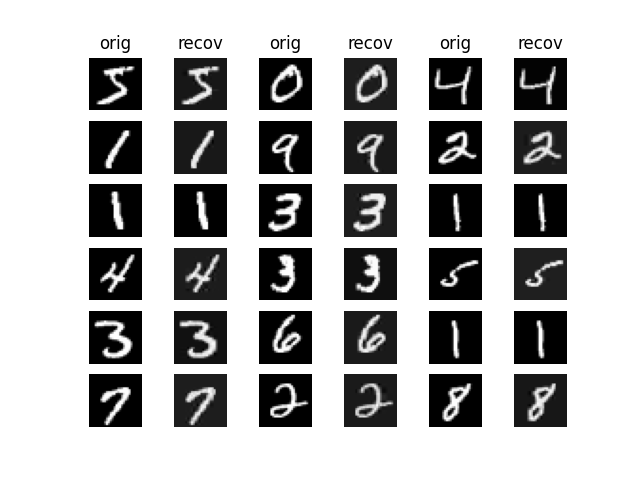}
    \caption{Comparison of the original images (odd columns) \\ to the recovered images (even columns)}
    \label{fig:MINTDecomp}
\end{subfigure}
\caption{The penalty method applied to MNIST in order to find sparse representation of images.}
\end{figure}
\end{center}

\section{Final Remarks}\label{Sec:Final}

This paper introduces a class of reformulations of the 
$ \ell_0 $-sparse optimization problem and develops suitable
constraint qualifications as well as corresponding first-
and second-order optimality conditions. The results are then
used to apply an exact penalty-type method for the solution 
of the $ \ell_0 $-sparse optimization problem which is particularly
useful if the contraints include nonnegativity conditions on the 
variables. Otherwise, one has to use a split of the free
variables which might introduce additional local minima.
Though the corresponding numerical results are still very
promising, in this situation, it might be more favourable to
apply another technique based on our reformulation which can
be applied also in the case where there exist free variables.
One natural possibility is the augmented Lagrangian method, 
and a closer look at this technique will therefore be part of our future
research.

\bibliographystyle{habbrv}
\bibliography{references_penalty}

\begin{thebibliography}{10}
\expandafter\ifx\csname url\endcsname\relax
  \def\url#1{\texttt{#1}}\fi
\expandafter\ifx\csname doi\endcsname\relax
  \def\doi#1{\burlalt{doi:#1}{http://dx.doi.org/#1}}\fi
\expandafter\ifx\csname urlprefix\endcsname\relax\def\urlprefix{URL }\fi
\expandafter\ifx\csname href\endcsname\relax
  \def\href#1#2{#2}\fi
\expandafter\ifx\csname burlalt\endcsname\relax
  \def\burlalt#1#2{\href{#2}{#1}}\fi

\bibitem{AHSS-12}
R.~Andreani, G.~Haeser, M.~L. Schuverdt, and P.~J. Silva.
\newblock A relaxed constant positive linear dependence constraint
  qualification and applications.
\newblock {\em Mathematical Programming}, 135(1-2):255--273, 2012.

\bibitem{Andreani2016}
R.~Andreani, J.~M. Mart{\'{i}}nez, A.~Ramos, and P.~J.~S. Silva.
\newblock A cone-continuity constraint qualification and algorithmic
  consequences.
\newblock {\em SIAM Journal on Optimization}, 26(1):96--110, 2016.
\newblock \doi{10.1137/15M1008488}.

\bibitem{AndreaniMartinezRamosSilva2018}
R.~Andreani, J.~M. Mart{\'{i}}nez, A.~Ramos, and P.~J.~S. Silva.
\newblock Strict constraint qualifications and sequential optimality conditions
  for constrained optimization.
\newblock {\em Mathematics of Operations Research}, 43(3):693--717, 2018.
\newblock \doi{10.1287/moor.2017.0879}.

\bibitem{Beck2017}
A.~Beck.
\newblock {\em First-Order Methods in Optimization}.
\newblock SIAM, 2017.

\bibitem{Bertsekas1999}
D.~P. Bertsekas.
\newblock {\em Nonlinear {P}rogramming.}
\newblock Athena Scientific, Belmont, MA, 1999.

\bibitem{Bienstock1996}
D.~Bienstock.
\newblock Computational study of a family of mixed-integer quadratic
  programming problems.
\newblock {\em Mathematical programming}, 74:121--140, 1996.

\bibitem{BirginMartinez2014}
E.~G. Birgin and J.~M. Mart{\'\i}nez.
\newblock {\em Practical {A}ugmented {L}agrangian {M}ethods for {C}onstrained
  {O}ptimization}.
\newblock SIAM, Philadelphia, 2014.
\newblock \doi{10.1137/1.9781611973365}.

\bibitem{BirginMartinezRaydan2000}
E.~G. Birgin, J.~M. Mart{\'\i}nez, and M.~Raydan.
\newblock Nonmonotone spectral projected gradient methods on convex sets.
\newblock {\em SIAM Journal on Optimization}, 10(4):1196--1211, 2000.
\newblock \doi{10.1137/s1052623497330963}.

\bibitem{BurdakovKanzowSchwartz2016}
O.~P. Burdakov, C.~Kanzow, and A.~Schwartz.
\newblock Mathematical programs with cardinality constraints: reformulation by
  complementarity-type conditions and a regularization method.
\newblock {\em SIAM Journal on Optimization}, 26(1):397--425, 2016.
\newblock \doi{10.1137/140978077}.

\bibitem{CervinkaKanzowSchwartz2016}
M.~{\v{C}}ervinka, C.~Kanzow, and A.~Schwartz.
\newblock Constraint qualifications and optimality conditions for optimization
  problems with cardinality constraints.
\newblock {\em Mathematical Programming}, 160(1):353--377, 2016.
\newblock \doi{10.1007/s10107-016-0986-6}.

\bibitem{ChenGuoLuYe2017}
X.~Chen, L.~Guo, Z.~Lu, and J.~J. Ye.
\newblock An augmented {L}agrangian method for non-{L}ipschitz nonconvex
  programming.
\newblock {\em SIAM Journal on Numerical Analysis}, 55(1):168--193, 2017.
\newblock \doi{10.1137/15M1052834}.

\bibitem{DeMarchiKanzow2023}
A.~De~Marchi, X.~Jia, C.~Kanzow, and P.~Mehlitz.
\newblock Constrained composite optimization and augmented lagrangian methods.
\newblock {\em Mathematical Programming}, 201(1–2):863–896, Feb. 2023.
\newblock \doi{10.1007/s10107-022-01922-4}.

\bibitem{Durea2014}
M.~Durea and R.~Strugariu.
\newblock {\em An Introduction to Nonlinear Optimization Theory}.
\newblock {D}e {G}ruyter, Dec. 2014.
\newblock \doi{10.2478/9783110426045}.

\bibitem{FanLi2001}
J.~Fan and R.~Li.
\newblock Variable selection via nonconcave penalized likelihood and its oracle
  properties.
\newblock {\em Journal of the American statistical Association},
  96(456):1348--1360, 2001.

\bibitem{Feng2018}
M.~Feng, J.~E. Mitchell, J.-S. Pang, X.~Shen, and A.~Wächter.
\newblock Complementarity formulations of $\ell_0$-norm optimization problems.
\newblock {\em Pacific Journal of Optimization}, 14(2):273 -- 305, 2018.

\bibitem{Gotoh2017}
J.-y. Gotoh, A.~Takeda, and K.~Tono.
\newblock Dc formulations and algorithms for sparse optimization problems.
\newblock {\em Mathematical Programming}, 169(1):141–176, July 2017.
\newblock \doi{10.1007/s10107-017-1181-0}.

\bibitem{KanzowMehlitzSteck2019}
C.~Kanzow, P.~Mehlitz, and D.~Steck.
\newblock Relaxation schemes for mathematical programmes with switching
  constraints.
\newblock {\em Optimization Methods and Software}, 36(6):1223--1258, 2021.
\newblock \doi{10.1080/10556788.2019.1663425}.

\bibitem{KanzowSchwartzWeiss2022}
C.~Kanzow, A.~Schwartz, and F.~Wei{\ss}.
\newblock The sparse(st) optimization problem: {R}eformulations, optimality,
  stationarity, and numerical results.
\newblock {\em arXiv preprint arXiv:2210.09589}, 2022.

\bibitem{Le2012}
H.~Y. Le.
\newblock Generalized subdifferentials of the rank function.
\newblock {\em Optimization Letters}, 7(4):731--743, Feb. 2012.
\newblock \doi{10.1007/s11590-012-0456-x}.

\bibitem{le2015dc}
H.~A. Le~Thi, T.~P. Dinh, H.~M. Le, and X.~T. Vo.
\newblock {DC} approximation approaches for sparse optimization.
\newblock {\em European Journal of Operational Research}, 244(1):26--46, 2015.

\bibitem{Liang2021}
Y.-C. Liang and J.~J. Ye.
\newblock Optimality conditions and exact penalty for mathematical programs
  with switching constraints.
\newblock {\em Journal of Optimization Theory and Applications}, 190(1):1–31,
  June 2021.
\newblock \doi{10.1007/s10957-021-01879-y}.

\bibitem{Lu2013}
Z.~Lu and Y.~Zhang.
\newblock Sparse approximation via penalty decomposition methods.
\newblock {\em SIAM Journal on Optimization}, 23(4):2448–2478, Jan. 2013.
\newblock \doi{10.1137/100808071}.

\bibitem{LuoPangRalph1996}
Z.-Q. Luo, J.-S. Pang, and D.~Ralph.
\newblock {\em Mathematical Programs with Equilibrium Constraints}.
\newblock Cambridge University Press, Cambridge, 1996.
\newblock \doi{10.1017/cbo9780511983658}.

\bibitem{DeMarchi2023}
A.~D. Marchi.
\newblock Proximal gradient methods beyond monotony.
\newblock {\em Journal of Nonsmooth Analysis and Optimization}, Volume
  4(Original research articles), June 2023.
\newblock \doi{10.46298/jnsao-2023-10290}.

\bibitem{Mehlitz2020}
P.~Mehlitz.
\newblock Asymptotic stationarity and regularity for nonsmooth optimization
  problems.
\newblock {\em {Journal of Nonsmooth Analysis and Optimization}}, 1:6575, 2020.
\newblock \doi{10.46298/jnsao-2020-6575}.

\bibitem{Mehlitz2020a}
P.~Mehlitz.
\newblock Stationarity conditions and constraint qualifications for
  mathematical programs with switching constraints.
\newblock {\em Mathematical Programming}, 181(1):149--186, 2020.
\newblock \doi{10.1007/s10107-019-01380-5}.

\bibitem{Mordukhovich2018}
B.~S. Mordukhovich.
\newblock {\em Variational Analysis and Applications}.
\newblock Springer, Cham, 2018.
\newblock \doi{10.1007/978-3-319-92775-6}.

\bibitem{Nguyen2015}
T.~B.~T. Nguyen, H.~A.~L. Thi, H.~M. Le, and X.~T. Vo.
\newblock {DC} approximation approach for $\ell_0$-minimization in compressed
  sensing.
\newblock In {\em Advanced Computational Methods for Knowledge Engineering:
  Proceedings of 3rd International Conference on Computer Science, Applied
  Mathematics and Applications-ICCSAMA 2015}, pages 37--48. Springer, 2015.

\bibitem{NocedalWright1999}
J.~Nocedal and S.~J. Wright.
\newblock {\em Numerical Optimization}.
\newblock Springer, 1999.

\bibitem{Ralph2004}
D.~Ralph and S.~J. Wright.
\newblock Some properties of regularization and penalization schemes for mpecs.
\newblock {\em Optimization Methods and Software}, 19(5):527–556, Oct. 2004.
\newblock \doi{10.1080/10556780410001709439}.

\bibitem{Ribeiro2022}
A.~A. Ribeiro, M.~Sachine, and E.~H. Krulikovski.
\newblock A comparative study of sequential optimality conditions for
  mathematical programs with cardinality constraints.
\newblock {\em Journal of Optimization Theory and Applications},
  192(3):1067--1083, 2022.

\bibitem{RockafellarWets2009}
R.~T. Rockafellar and R.~J.-B. Wets.
\newblock {\em Variational Analysis}, volume 317.
\newblock Springer Science \& Business Media, Berlin, 2009.
\newblock \doi{10.1007/978-3-642-02431-3}.

\bibitem{Tillmann2021}
A.~M. Tillmann, D.~Bienstock, A.~Lodi, and A.~Schwartz.
\newblock Cardinality minimization, constraints, and regularization: a survey.
\newblock {\em arXiv preprint arXiv:2106.09606}, 2021.

\bibitem{vandenBerg2011}
E.~van~den Berg and M.~P. Friedlander.
\newblock Sparse optimization with least-squares constraints.
\newblock {\em SIAM Journal on Optimization}, 21(4):1201–1229, Oct. 2011.
\newblock \doi{10.1137/100785028}.

\bibitem{Zhang2010}
C.-H. Zhang.
\newblock Nearly unbiased variable selection under minimax concave penalty.
\newblock {\em The Annals of Statistics}, 38(2), Apr. 2010.
\newblock \doi{10.1214/09-aos729}.

\bibitem{ZhangYin2018}
S.~Zhang and J.~Xin.
\newblock Minimization of transformed l 1 penalty: theory, difference of convex
  function algorithm, and robust application in compressed sensing.
\newblock {\em Mathematical Programming}, 169(1):307--336, 2018.

\end{thebibliography}

\end{document}